\newtheorem{theorem}{Theorem}[section]
\newtheorem{cor}[theorem]{Corollary}
\newtheorem{defn}[theorem]{Definition}
\newtheorem{example}[theorem]{Example}
\newtheorem{lemma}[theorem]{Lemma}
\newtheorem{prop}[theorem]{Proposition}
\newtheorem{remark}[theorem]{Remark}
\newtheorem*{ack}{Acknowledgements}
\newcommand{\MC}{{\mathcal M}}
\newcommand{\Xs}{{X^{(n)}}}
\newcommand{\Xh}{{X^{[n]}}}
\newcommand{\LB}{\mathbb{L}}
\newcommand{\Z}{\mathbb{Z}}
\newcommand{\N}{\mathbb{N}}
\newcommand{\Q}{\mathbb{Q}}
\newcommand{\C}{\mathbb{C}}
\newcommand{\K}{\mathbb{K}}
\newcommand{\Lfact}[1]{[#1]_{\LB }!}
\newcommand{\Lbinom}[2]{\left[\begin{matrix}{#1}\\{#2}\end{matrix}\right]_{\LB }}
\begin{document}

\title[Characteristic classes of Hilbert schemes]{Characteristic classes of Hilbert schemes of points via symmetric products}

\author[S. Cappell ]{Sylvain Cappell}
\address{S. Cappell: Courant Institute, New York University, 251 Mercer Street, New York, NY 10012, USA}
\email {cappell@cims.nyu.edu}

\author[L. Maxim ]{Laurentiu Maxim}
\address{L. Maxim : Department of Mathematics, University of Wisconsin-Madison,
480 Lincoln Drive,
Madison, WI 53706-1388, USA.}
\email {maxim@math.wisc.edu}

\author[T. Ohmoto ]{Toru Ohmoto}
\address{T. Ohmoto: Department of Mathematics, Hokkaido University, 
Kita 10 Nishi 8, 
Sapporo 060-0810, Japan}
\email {ohmoto@math.sci.hokudai.ac.jp}

\author[J. Sch\"urmann ]{J\"org Sch\"urmann}
\address{J.  Sch\"urmann : Mathematische Institut,
          Universit\"at M\"unster,
          Einsteinstr. 62, 48149 M\"unster,
          Germany.}
\email {jschuerm@math.uni-muenster.de}

\author[S. Yokura ]{Shoji Yokura}
\address{S. Yokura : Department of Mathematics and Computer Science, Faculty of Science, Kagoshima University, 21-35 Korimoto 1-chome, Kagoshima 890-0065, Japan.}
\email {yokura@sci.kagoshima-u.ac.jp}


\date{\today}


\keywords{Hilbert scheme, symmetric product, generating series, power structure, Pontrjagin ring, motivic exponentiation, characteristic classes}

\begin{abstract}  We obtain a formula for the generating series of (the push-forward under the Hilbert-Chow morphism of) the Hirzebruch homology characteristic classes of the Hilbert schemes of points for a smooth quasi-projective variety of arbitrary pure dimension. This result is based on a geometric construction of a motivic exponentiation generalizing the notion of motivic power structure, as well as on a formula for the generating series of the Hirzebruch homology characteristic classes of symmetric products. We apply the same methods for the calculation of generating series formulae for the Hirzebruch classes of the push-forwards of ``virtual motives" of Hilbert schemes of a threefold. As corollaries, we obtain counterparts for the MacPherson (and Aluffi) Chern classes of Hilbert schemes of a smooth quasi-projective variety  (resp. for threefolds). For a projective Calabi-Yau threefold, the latter yields a Chern class version of the dimension zero MNOP conjecture.
\end{abstract}

\maketitle

\tableofcontents

\section{Introduction}

Moduli spaces of objects associated with a given space $X$ carry many interesting and  surprising structures. While they reflect back some of the properties of $X$, it is often the case that these moduli spaces carry more geometric 
structures
and bring out seemingly hidden aspects of the geometry and topology of $X$. 
One convincing example of this philosophy is that of the {\it Hilbert schemes of points} on a quasi-projective manifold.  These objects, originally studied in algebraic geometry, are closely related to several branches of mathematics, such as singularities, symplectic geometry, representation theory and  even theoretical physics. \\

The Hilbert scheme $X^{[n]}$ of a quasi-projective manifold $X$ describes collections of $n$ (not necessarily distinct) points on $X$. It is the moduli space 
of
zero-dimensional subschemes of $X$ of length $n$.  (Here $X^{[n]}$ already denotes  the reduced scheme structure, which suffices for our applications.) It comes equipped with a natural proper morphism $\pi_n:X^{[n]} \to X^{(n)}$ to the $n$-th symmetric product of $X$, the {\it Hilbert-Chow morphism},  taking a zero-dimensional 
subschemes to its associated zero-cycle. This morphism is  birational for $X$ of dimension at most two, 
but otherwise for large $n$ the Hilbert scheme is in general reducible and has components of dimension much larger than that of the symmetric product.\\

The Hilbert schemes of points on a smooth curve $C$ are isomorphic to the corresponding symmetric products of $C$, and they are all smooth. Hilbert schemes of  points on a smooth algebraic surface $S$ are also smooth, and their topology is fairly well understood: there exist generating series formulae for their Betti numbers \cite{G},  Hodge numbers and Hirzebruch genus \cite{GS}, elliptic genus \cite{BL2}, characteristic classes \cite{BNW, NW} etc. Already in this case, there are relations to the enumerative geometry of curves, to moduli spaces of sheaves, to infinite dimensional Lie algebras, to the combinatorics of the symmetric group, and for a $K3$ (or abelian) surface they provide (some of the very few)  examples of compact hyperk\"ahler manifolds.

Hilbert schemes of points on a smooth variety $X$ of dimension $d \geq 3$ are usually not smooth, and much less is known about their properties.
It is therefore important to find ways to compute  their topological and analytical invariants. \\

In \cite{C} 
Cheah finds a generating function which expresses the 
Hodge-Deligne polynomials of Hilbert schemes in terms of the Hodge-Deligne polynomial of $X$ and those of the {\it punctual Hilbert schemes} ${\rm Hilb}^n_{\C^d,0}$ parametrizing zero-dimensional subschemes of length $n$ of $\C^d$ concentrated at the origin. Known properties of the latter  yield (e.g., by using \cite{ES}) explicit formulae (i.e., depending only on the Hodge-Deligne polynomial of $X$) when the Hilbert scheme $X^{[n]}$ is smooth (e.g., if $n \leq 3$ or $d \leq 2$). Cheah's result is refined in \cite{GLM}, where the notion of {\it power structure over a (semi-)ring} is used to express the generating series of classes (in the Grothendieck ring $K_0({var}/{\C})$ of varieties) of Hilbert schemes of points on a quasi-projective manifold of dimension $d$ as an {\it exponent} of that for the affine space $\C^d$. (Recall that by using a power structure over a (semi-)ring $R$, one can make sense of an expression of the form $(1+\sum_{i \geq 1} a_it^i)^m$, for $a_i$ and $m$ in $R$.) The main result of \cite{GLM} is the  motivic identity:
\begin{equation}\label{GLM}
1+\sum_{n \geq 1} [X^{[n]}] \cdot t^n=\left(1+\sum_{n \geq 1} \left[{\rm Hilb}^n_{\C^d,0}\right] \cdot t^n\right)^{[X]} \in K_0(var/\C)[[t]].\end{equation}
Cheah's formula in \cite{C} is obtained from (\ref{GLM}) by 
applying
the pre-lambda ring homomorphism $e(-;u,v)$ defined 
as
the Hodge-Deligne polynomials, that is,  $$e(-;u,v):K_0(var/\C) \to \Z[u,v];$$
$$e([X];u,v):=\sum_{p,q} \left( \sum_i (-1)^i h^{p,q}(H^i_c(X;\C)) \right) \cdot u^p v^q, $$
with $h^{p,q}$ denoting the Hodge numbers of Deligne's mixed Hodge structure on the cohomology groups $H^i_c(X;\Q)$.

\bigskip

The aim of this paper is to compute a generating series formula for (the push-forward under the Hilbert-Chow morphism of) the motivic Hirzebruch classes ${T_{y}}_*(X^{[n]})$ of Hilbert schemes of points on a  $d$-dimensional quasi-projective manifold. Recall here that the motivic Hirzebruch classes ${T_{y}}_*(Z)$ of a complex algebraic variety $Z$ were defined in \cite{BSY} as an extension to the singular setting of Hirzebruch's cohomology characteristic classes appearing in the generalized Hirzebruch-Riemann-Roch theorem  \cite{H}. The motivic Hirzebruch classes are defined in \cite{BSY} via a {\it motivic Hirzebruch 
class transformation}
$$T_{y*}:K_0(var/Z) \to H_*(Z):=H^{BM}_{even}(Z) \otimes \Q[y],$$ whose normalization provides a {\it functorial unification} of the Chern class transformation of MacPherson \cite{MP}, Todd class transformation of Baum-Fulton-MacPherson \cite{BFM} and L-class transformation of Cappell-Shaneson \cite{CS1}, respectively, thus answering positively an old question of MacPherson about the existence of such a unifying theory \cite{MP2} (cf. \cite{Yo}). 
 Over a point space 
 this transformation $T_{y*}(-)$ becomes the $\chi_y$-genus given by the specialization of the Hodge-Deligne polynomial at $(u,v)=(-y,1)$, that is,  
$$\chi_y(-)=e(-;-y,1): K_0(var/\C) \to \Z[y].$$

The main result of this paper 
is the following computation of the generating series for the push-forward of Hirzebruch classes of Hilbert schemes in terms of a homological exponentiation which will be explained later on in this introduction:
\begin{theorem}\label{thm1} Let $X$ be a smooth complex quasi-projective variety of pure dimension $d$. Denote by $X^{[n]}$ the Hilbert scheme of zero-dimensional subschemes of $X$ of length $n$, and by $\pi_n:X^{[n]} \to X^{(n)}$ the Hilbert-Chow morphism to the $n$-th symmetric product of $X$.  Let ${\rm Hilb}^n_{\C^d,0}$ be the punctual Hilbert scheme of zero-dimension subschemes of length $n$ supported at the origin in $\C^d$. Let $$PH_*(X):=\sum_{n\geq 0}^{\infty} \left( H^{BM}_{even}(X^{(n)}) \otimes \Q[y] \right) \cdot t^n$$ be the {\it Pontrjagin ring}.
Then the following generating series formula for the push-forwards under the Hilbert-Chow morphisms of the un-normalized Hirzebruch classes ${T_{(-y)}}_*(X^{[n]})$ of Hilbert schemes holds in the {\it Pontrjagin ring} $PH_*(X)$:

\begin{eqnarray*}
\sum_{n=0}^{\infty} {\pi_n}_*{T_{(-y)}}_*(X^{[n]}) \cdot t^n &=& \left(1+ \sum_{n=1}^{\infty} \chi_{-y}({\rm Hilb}^n_{\C^d,0}) \cdot t^n \cdot d^n_* \right)^{{T_{(-y)}}_*(X)}\\  &:=&  \left( \prod_{k=1}^{\infty} (1-t^k \cdot d^k_*)^{-\chi_{-y}(\alpha_k)} \right)^{{T_{(-y)}}_*(X)} \\ &:=& \prod_{k=1}^{\infty} (1-t^k \cdot d^k_*)^{-\chi_{-y}(\alpha_k) \cdot {T_{(-y)}}_*(X)},
\end{eqnarray*}
where the $\alpha_k \in K_0(var/{\C})$ are the coefficients appearing in the Euler product for the geometric power structure on the pre-lambda ring $K_0(var/{\C})$, i.e.,
\begin{equation}\label{Euler}
1+\sum_{n \geq 1} \left[{\rm Hilb}^n_{\C^d,0}\right] \cdot t^n=\prod_{k=1}^{\infty} (1-t^k)^{-\alpha_k}.
\end{equation}
\end{theorem}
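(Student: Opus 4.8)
The plan is to deduce the Hirzebruch-class identity from the motivic identity (\ref{GLM}) by lifting it to the relative Grothendieck groups over the symmetric products and then pushing it through the Hirzebruch class transformation $T_{(-y)*}$. First I would upgrade (\ref{GLM}) to a relative statement: instead of the absolute classes $[X^{[n]}] \in K_0(var/\C)$, I would record the Hilbert--Chow morphisms as relative classes $[\pi_n] := [X^{[n]} \to X^{(n)}] \in K_0(var/X^{(n)})$ and assemble them into the generating object $\sum_n [\pi_n]\, t^n$. The motivic exponentiation constructed earlier in the paper --- a geometric lift of the power structure on $K_0(var/\C)$ that keeps track of the symmetric-product maps $X^{(n)} \times X^{(m)} \to X^{(n+m)}$ --- should express this relative generating object as an exponential whose base is built from the (relative classes of the) punctual Hilbert schemes ${\rm Hilb}^n_{\C^d,0}$, supported along the small diagonals, and whose exponent is the class $[X]$.

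Next I would apply the transformation $T_{(-y)*}$ termwise. Because $T_{y*}$ is functorial for proper pushforward, the left-hand side becomes $\sum_n {\pi_n}_* T_{(-y)*}(X^{[n]})\, t^n$, the generating series we want. The decisive input is that $T_{(-y)*}$ intertwines the motivic exponentiation on $K_0(var/-)$ with the homological exponentiation on the Pontrjagin ring $PH_*(X)$; this is precisely the content of the symmetric-product formula for Hirzebruch classes recalled earlier, which rests on the compatibility of $T_{y*}$ with exterior products and hence with the symmetric-group quotients defining $X^{(n)}$. Under this compatibility the exponent $[X]$ maps to $T_{(-y)*}(X)$, and the base transforms into a series whose coefficients are controlled by the fibers of $\pi_n$ over the small diagonals.

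The identification of the base is the step requiring the most care. Over the small (punctual) diagonal the fiber of the Hilbert--Chow morphism is ${\rm Hilb}^n_{\C^d,0}$, which is constant along the stratum; since $T_{(-y)*}$ reduces over a point to the genus $\chi_{-y}$, applying it to the relative punctual class contributes $\chi_{-y}({\rm Hilb}^n_{\C^d,0})$ multiplied by the class of the diagonal embedding $X \hookrightarrow X^{(n)}$, encoded by the Pontrjagin-ring operator $d^n_*$. This yields the first displayed equality. To obtain the remaining two equalities I would feed the Euler factorization (\ref{Euler}) of the punctual generating series through the exponentiation: since both the motivic and the homological exponentiations respect the Euler product, the base $1 + \sum_n \chi_{-y}({\rm Hilb}^n_{\C^d,0})\, t^n\, d^n_*$ factors as $\prod_k (1 - t^k d^k_*)^{-\chi_{-y}(\alpha_k)}$, and exponentiation by $T_{(-y)*}(X)$ then distributes over the product, giving the final product form.

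The hard part, and where I expect the real work to concentrate, is verifying that $T_{(-y)*}$ genuinely transforms the geometric motivic exponentiation into the homological exponentiation in $PH_*(X)$ --- that is, that the diagonal operators $d^n_*$ produced on the homology side match the symmetric-product combinatorics on the motivic side. This demands the full strength of the symmetric-product Hirzebruch class formula together with a careful check that the $\chi_{-y}$-specialization of the relative punctual classes appears with exactly the diagonal weighting recorded by $d^n_*$; the subtlety is that the punctual fibers sit over the deepest diagonal stratum, so one must control how $T_{y*}$ behaves under the stratified pushforward rather than merely over a point.
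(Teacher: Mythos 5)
Your outline is essentially the paper's own proof: the relative lift you describe in your first paragraph is exactly Theorem \ref{Hilbert-BBS} (the Behrend--Bryan--Szendr\"oi identity $1+\sum_{n\geq 1}[X^{[n]}\overset{\pi_n}{\to}X^{(n)}]\,t^n=\bigl(1+\sum_{n\geq 1}[{\rm Hilb}^n_{\C^d,0}]\,t^n\bigr)^{X}$, where the exponent is the variety $X$ itself, not the class $[X]$), and the decisive input you name is Theorem \ref{mot} and its corollary (\ref{eqmot2}). The one place where you mislocate the difficulty is your final paragraph. The ``identification of the base'' and the ``stratified pushforward'' check you anticipate do not arise in the paper's argument, for two reasons. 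First, the first two displayed equalities in the theorem are \emph{definitions} (note the $:=$ signs): the homological exponentiation with base $1+\sum_n \chi_{-y}({\rm Hilb}^n_{\C^d,0})\,t^n\, d^n_*$ is defined via the unique Euler factorization in $\Q[y]$, and its coefficients are $\chi_{-y}(\alpha_k)$ precisely because $\chi_{-y}$ is a pre-lambda ring homomorphism commuting with Euler products --- so there is no independent base identification to prove. Second, the intertwining of $T_{(-y)*}$ with the exponentiations is only ever needed for bases of the special form $(1-t)^{-\alpha_k}$: one first factors the motivic side as $\bigodot_{k} P_k\bigl(((1-t)^{-\alpha_k})^X\bigr)$ using properties (iii$'$) and (vii$'$), and then property (v$'$) (see Example \ref{ex-sym}) converts $((1-t)^{-[X']})^X$ into $\pi_!\bigl((1-t)^{-X'\times X}\bigr)$, an honest generating series of symmetric products of $X'\times X$, to which the CMSSY symmetric-product formula applies directly; linearity in $\alpha_k=[X']-[X'']$ (using that $T_{(-y)*}$ and $\chi_{-y}$ are ring homomorphisms) handles virtual exponents. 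So your paragraphs one through three already constitute the proof, while the stratified analysis of $T_{y*}$ over the deep diagonal strata proposed in paragraph four would amount to re-deriving by hand the combinatorics that the Euler product together with (iii$'$), (v$'$), (vii$'$) is designed to package away.
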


If $X$ is projective, by identifying  the degrees in the above formula, we recover Cheah's generating series formula for the Hodge polynomials $\chi_{-y}(\Xh)$ of Hilbert schemes.
Theorem \ref{thm1} implies that the classes ${\pi_n}_*{T_{(-y)}}_*(X^{[n]})$ 
in the homology of
the symmetric product $X^{(n)}$ can be calculated in terms of the class ${T_{(-y)}}_*(X)$ of $X$, by using the universal geometric constants $\chi_{-y}({\rm Hilb}^n_{\C^d,0})$ (and respectively, $\chi_{-y}(\alpha_k)$) coming from the punctual
Hilbert schemes, as well as some universal algebraic constants related to the combinatorics of the symmetric groups, codified in the definition of the exponentiation on the right hand side.
Moreover, these parts are completely separated into the base and exponent of the
exponentiation.  Of course, the combinatorics of the symmetric groups only 
appear after pushing 
down from the Hilbert schemes to the symmetric products, so that we 
do not
get formulas for the classes ${T_{(-y)}}_*(X^{[n]})$ in the homology of the Hilbert schemes
(as done in \cite{BNW, NW} for $X$ a smooth surface).\\

For a surface $X$, it is known by \cite{ES} that $\alpha_k=[\C]^{k-1}$, so in this case all of the 
appearing geometric invariants can be explicitly computed. We therefore obtain the following:
\begin{cor}\label{cormain} If $X$ is a smooth surface, then in the notations of the above theorem we obtain the following closed formula: 
\begin{eqnarray}\label{eqa}
\sum_{n=0}^{\infty} {\pi_n}_*{T_{(-y)}}_*(X^{[n]}) \cdot t^n &=& \prod_{k=1}^{\infty} (1-t^k \cdot d^k_*)^{-y^{k-1} \cdot {T_{(-y)}}_*(X)}.
\end{eqnarray}
\end{cor}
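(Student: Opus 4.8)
The plan is to read the corollary off Theorem \ref{thm1} by specializing the universal motivic coefficients $\alpha_k$ to the surface case. The single external input I would invoke is the computation of Ellingsrud--Str\o mme \cite{ES}, which for $d=2$ identifies the Euler-product exponents of (\ref{Euler}) as $\alpha_k = [\C]^{k-1} = \LB^{k-1}$ in $K_0(var/\C)$. Granting this, the entire content of the corollary reduces to evaluating the geometric constants $\chi_{-y}(\alpha_k)$ that appear in the exponents of the third expression of Theorem \ref{thm1}.

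The one place demanding care is the sign bookkeeping in the specialization of the Hodge--Deligne polynomial, so I would carry out the genus computation explicitly. By definition $\chi_{-y}(-) = e(-;y,1)$, and since $e(-;u,v)$ is a ring homomorphism it is multiplicative on products. The Lefschetz motive $[\C] = \LB$ has $H^2_c(\C) = \Q(-1)$ as its only nonvanishing compactly supported cohomology, of pure Hodge type $(1,1)$, whence $e([\C];u,v) = uv$ and therefore $\chi_{-y}([\C]) = e([\C];y,1) = y$. Multiplicativity then gives
\begin{equation*}
\chi_{-y}(\alpha_k) = \chi_{-y}([\C]^{k-1}) = \left(\chi_{-y}([\C])\right)^{k-1} = y^{k-1}.
\end{equation*}

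Substituting $\chi_{-y}(\alpha_k) = y^{k-1}$ into the exponents of Theorem \ref{thm1} yields at once the asserted closed product (\ref{eqa}). I do not expect a genuine obstacle here: the substantive geometry is external (the \cite{ES} identification of $\alpha_k$, together with the full strength of Theorem \ref{thm1}), and the only thing that could go wrong is mismatching the conventions for $\chi_y$ versus $\chi_{-y}$ and the Tate twist on $H^2_c(\C)$ --- which the computation above pins down. It is worth noting that this specialization makes every appearing invariant explicit, so that (\ref{eqa}) provides a symmetric-product counterpart to the known surface formulae of \cite{BNW, NW}.
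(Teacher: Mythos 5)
Your proposal is correct and matches the paper's own argument exactly: the corollary is obtained by specializing Theorem \ref{thm1} via the Ellingsrud--Str\o mme identification $\alpha_k=[\C]^{k-1}$ and the evaluation $\chi_{-y}(\alpha_k)=y^{k-1}$, and your sign bookkeeping ($\chi_{-y}=e(-;y,1)$, $e([\C];u,v)=uv$) is accurate. The paper adds only the passing remark that an alternative proof could be given via the BBDG decomposition theorem along the lines of G\"ottsche--Soergel, but its actual derivation is the one you gave.
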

While the proof of formula (\ref{eqa}) uses a generalized (motivic) exponentiation (see \S\ref{motexp}), another proof of Corollary \ref{cormain} can be given by using the BBDG decomposition theorem \cite{BBD} 
along the lines of \cite{GS}.

For the following specializations of the (un-normalized) Hirzebruch classes at the parameters $y=-1, 0, 1$, see also Section \ref{motHir}.
For $y=-1$, formula (\ref{eqa}) specializes to a generating series for the homology $L$-classes $$\widetilde{L}_*(X^{[n]}):=\widetilde{L}^*(TX^{[n]})\cap [X^{[n]}],$$ with $\widetilde{L}^*$ the Atiyah-Singer $L$-class of the tangent bundle (which agrees up to powers of $2$ with the Hirzebruch $L$-class):
\begin{eqnarray}\label{eqb}
\sum_{n=0}^{\infty} {\pi_n}_*{\widetilde{L}}_*(X^{[n]}) \cdot t^n &=& \prod_{k=1}^{\infty} (1-t^k \cdot d^k_*)^{(-1)^k \cdot {\widetilde{L}}_*(X)}.
\end{eqnarray}
Similarly, for $y=0$, formula (\ref{eqa}) yields a generating series for the homology Todd classes $${Td}_*(X^{[n]}):={Td}^*(TX^{[n]})\cap [X^{[n]}],$$ with ${Td}^*$ the Todd class of the tangent bundle:
\begin{eqnarray}\label{eqb2}
\sum_{n=0}^{\infty} {\pi_n}_*{Td_*(X^{[n]}}) \cdot t^n &=& (1-t \cdot d_*)^{-Td_*(X)}=\sum_{n=0}^{\infty} {Td^{BFM}_*(X^{(n)}}) \cdot t^n,
\end{eqnarray}
where the last equality follows from the generating series formula for Hirzebruch classes (or Baum-Fulton-MacPherson Todd classes) of symmetric products \cite{CMSSY,M}, as recalled in Corollary \ref{symHir}. Formula (\ref{eqb2}) fits with the birational invariance of the Baum-Fulton-MacPherson Todd classes for spaces with at most rational singularities, like quotient singularities, as $\pi_n$ is in this case a resolution of singularities.

These specializations of the homology Hirzebruch classes of Hilbert schemes are valid only for {\it smooth} Hilbert schemes, whereas the specialization to $y=1$ in relation to MacPherson Chern classes also holds for singular Hilbert schemes, as explained later on in Corollary \ref{cormain2}. 
\\


In general, for a quasi-projective manifold $X$ of arbitrary dimension $d$,  Theorem \ref{thm1} implies that for the computation of the characteristic classes of Hilbert schemes $X^{[n]}$, for $n \leq N$ and $N \in \N$ a fixed integer, we only need to know the exponents $\alpha_1, \cdots, \alpha_N$ in the Euler product decomposition $(\ref{Euler})$, or more precisely only the Hodge polynomials $\chi_{-y}$ of these exponents. Such Euler exponents can in general be computed inductively in terms of the coefficients of the given power series by using Gorsky's inversion formula \cite{Go}[Thm.1].  

As an example, let us assume that $n \leq 3$ (and $d \geq 1$ arbitrary). In this case, the Hilbert scheme $X^{[n]}$ is smooth. Moreover, the Grothendieck class $\left[{\rm Hilb}^n_{\C^d,0}\right]$ of the punctual Hilbert scheme for $n \leq 3$ is given by the formula (e.g., see \cite{BBS}[Rem.3.5], and compare also with \cite{C}[Sect.4]):
\begin{equation}\label{motsm}
\sum_{n=0}^3 \left[{\rm Hilb}^n_{\C^d,0}\right] \cdot t^n=1+t+\Lbinom{d}{1} t^{2}+\Lbinom{d+1}{2}t^{3} \ ,
\end{equation}
where 
\[
\Lbinom{n}{k} := \frac{\Lfact{n}}{\Lfact{n-k}\Lfact{k}} 
\]
and 
\[
\Lfact{n} := (\LB ^{n}-1) (\LB ^{n-1}-1)\dotsb (\LB -1) \ .
\]
(Here we use the notation $\LB:=[\C]$.) This information is sufficient for computing the exponents $\alpha_1$, $\alpha_2$ and $\alpha_3$ in $(\ref{Euler})$ by making use of  the inversion formula of \cite{Go}. More precisely, we obtain in this case that:
\begin{equation}\label{alpha} \alpha_1=1 \ , \ \ \alpha_2= \frac{\LB^d-1}{\LB-1} - 1 \ , 
\ \ \alpha_3=\frac{(\LB^{d+1}-1)(\LB^d-1)}{(\LB^2-1)(\LB-1)} - \frac{\LB^d-1}{\LB-1} \ .\end{equation}
In particular, for $d=1$, we get that $\alpha_2=0$ and $\alpha_3=0$, as one expects since the Hilbert schemes coincide with the symmetric products in this case. Similarly, for $d=2$, formula (\ref{alpha}) reduces to $\alpha_2=\LB$ and $\alpha_3=\LB^2$, which are just special cases of the above-mentioned general formula $\alpha_k=\LB^{k-1}$, which holds for surfaces. Finally, note that $\chi_{-y}(\alpha_2)$ and $\chi_{-y}(\alpha_3)$ vanish for $y=0$, fitting with the birational invariance of the Baum-Fulton-MacPherson Todd classes for spaces with at most rational singularities, like quotient singularities
(as in the surface case).
\\

Let us come back to the general situation and explain the notations used in Theorem \ref{thm1}. First, $$PH_*(X):=\sum_{n=0}^{\infty} \left( H^{BM}_{even}(X^{(n)}) \otimes \Q[y] \right)  \cdot t^n :=\prod_{n=0}^{\infty}  \left( H^{BM}_{even}(X^{(n)}) \otimes \Q[y] \right) $$ is a commutative ring with unit $1 \in H^{BM}_{even}(X^{(0)}) \otimes \Q[y]$ (where $X^{(0)}:=\{pt\}$), with respect to the usual Pontrjagin product $\odot$ induced by 
$$\left( H^{BM}_{even}(X^{(n)}) \otimes \Q[y] \right) \times \left( H^{BM}_{even}(X^{(m)}) \otimes \Q[y] \right) \to H^{BM}_{even}(X^{(n+m)}) \otimes \Q[y].$$
Also,  $d^n:X \to X^{(n)}$ is the composition of the diagonal embedding $X \to X^n$ with the natural projection $X^n \to X^{(n)}$, so that (for $d^1=id_X$) 
$$d^n_*=d_*^{\odot n}: H^{BM}_{even}(X) \otimes \Q[y] \to H^{BM}_{even}(X^{(n)}) \otimes \Q[y].$$
Note that the maps $d^n$ are needed to transport homology classes like  ${T_{(-y)}}_*(X)$
from $X$ to $X^{(n)}$.
Let us denote by $$\Psi_r:H^{BM}_{2k}(-) \otimes \Q[y] \to H^{BM}_{2k}(-) \otimes \Q[y]$$ the {\it $r$-th homological Adams operation} defined by multiplying with $1/{r^k}$ on $H^{BM}_{2k}(-;\Q)$ together with $\Psi_r(y)=y^r$ ($r,k \in \mathbb{N}$). By analogy with the classical formula 
$$(1-t)^{-(\cdot)}:=\lambda_t(\cdot)=\exp\left( \sum_{r=1}^{\infty} \Psi_r (\cdot) \frac{t^r}{r} \right)$$
relating a pre-lambda structure to the Adams operations (see \cite{FL}), we define the group homomorphism
\begin{equation}\label{exp-homology}
(1-t\cdot d_*)^{-(\cdot)}:=\exp \left( \sum_{r=1}^{\infty} \Psi_r d^r_*(\cdot) \frac{t^r}{r} \right) : \left(H^{BM}_{even}(X) \otimes \Q[y],+ \right) \to \left(PH_*(X),\odot \right) .
\end{equation}
In analogy with the relation
$$(1-t^k)^{-(\cdot)}=(1-t)^{-(\cdot)}\vert_{t \mapsto t^k}$$ between a pre-lambda structure and the corresponding power structure, 
we define for $k \geq 1$ the group homomorphism
\begin{equation}\label{power-homology}
(1-t^k \cdot d^k_*)^{-(\cdot)}:=P_k \circ \left( (1-t\cdot d_*)^{-(\cdot)}  \right) : \left( H^{BM}_{even}(X) \otimes \Q[y],+\right) \to \left(PH_*(X),\odot\right)\:,
\end{equation}
with $$P_k:PH_*(X) \to PH_*(X)$$ 
the {\it $k$-th power operation}  
on the Pontrjagin ring $PH_*(X)$ defined by the push forwards $p_{k*}^{(n)}$ for the natural maps 
$ p_k^{(n)} :\Xs \to X^{(nk)}$, $n \geq 0$, 
which are induced by the diagonal embeddings $X^n \to (X^n)^k$. Note that $P_k$ is a ring homomorphism with respect to the Pontrjagin product $\odot$, with $P_k\circ P_m=P_{km}$,
$P_1$ the identity and $P_k \circ d^r_*=d^{rk}_*$. Hence, 
$$(1-t^k \cdot d^k_*)^{-(\cdot)}=\exp \left( \sum_{r=1}^{\infty} \Psi_r d^{rk}_*(\cdot) \frac{t^{rk}}{r} \right)=\exp \left( \sum_{r=1}^{\infty}  d^{rk}_*(\Psi_r(\cdot)) \frac{t^{rk}}{r} \right).$$
Finally, the homological exponentiation
\begin{equation}\label{exp-homology2}
(1+\ \sum_{n=1}^{\infty} a_n\cdot t^n \cdot d^n_*)^{-(\cdot)}:=
\prod_{k=1}^{\infty}(1-t^k\cdot d^k_*)^{-(b_k\,\cdot)}
 : \left(H^{BM}_{even}(X) \otimes \Q[y],+ \right) \to \left(PH_*(X),\odot \right) 
\end{equation}
is defined by using the unique Euler product decomposition
$$1+\ \sum_{n=1}^{\infty} a_n\cdot t^n = \prod_{k=1}^{\infty}(1-t^k)^{-b_k}$$
with coefficients $a_n,b_k$ in the pre-lambda ring $H^{BM}_{even}(pt) \otimes \Q[y]=\Q[y]$,
whose Adams operation corresponds to $\Psi_r(a\cdot y^n)=a\cdot y^{nr}$ for $a\in \Q$.

\vspace{1.5mm}

After a suitable re-normalization, and by specializing to $y=1$ (compare with \cite{CMSSY}), our Theorem \ref{thm1} yields a generating series formula for the push-forwards (under the Hilbert-Chow morphisms) of the rationalized MacPherson-Chern classes $c_*(X^{[n]})$ of Hilbert schemes, namely:
\begin{cor} \label{cormain2} Under the notations and hypotheses of Theorem \ref{thm1}, we have the following generating series formula:
\begin{eqnarray*}
\sum_{n=0}^{\infty} {\pi_n}_*c_*(X^{[n]}) \cdot t^n &=& \left(1+ \sum_{n=1}^{\infty} \chi({\rm Hilb}^n_{\C^d,0}) \cdot t^n \cdot d^n_* \right)^{c_*(X)}\\  &:=&  \left( \prod_{k=1}^{\infty} (1-t^k \cdot d^k_*)^{-\chi(\alpha_k)} \right)^{c_*(X)} \\ &:=& \prod_{k=1}^{\infty} (1-t^k \cdot d^k_*)^{-\chi(\alpha_k) \cdot c_*(X)} \in \sum_{n=0}^{\infty}  H^{BM}_{even}(X^{(n)};\Q)  \cdot t^n  .
\end{eqnarray*}
where the operation $(1-t\cdot d_*)^{-(\cdot)}$ is defined here by
\begin{equation}
(1-t\cdot d_*)^{-(\cdot)}:=\exp \left( \sum_{r=1}^{\infty} d^r_*(\cdot) \frac{t^r}{r} \right) .
\end{equation}
\end{cor}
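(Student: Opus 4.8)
The plan is to derive this from Theorem \ref{thm1} exactly as the phrase ``after a suitable re-normalization, and by specializing to $y=1$'' suggests: apply to the whole identity of Theorem \ref{thm1} the normalization operator relating the un-normalized Hirzebruch transformation $T_{y*}$ to the normalized one $\widetilde{T}_{y*}$, and then set $y=1$, using that $\widetilde{T}_{-1*}=c_*\otimes\Q$ by \cite{BSY}. Concretely, let $\mathcal{N}$ be the operator on $H^{BM}_{even}(-)\otimes\Q[y,(1-y)^{-1}]$ which multiplies the degree $2k$ part by $(1-y)^{-k}$; since the Hirzebruch parameter in Theorem \ref{thm1} is $-y$, one has $\mathcal{N}\circ T_{(-y)*}=\widetilde{T}_{(-y)*}$. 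The operator $\mathcal{N}$ depends only on the homological degree, so it is $\Q[y]$-linear, commutes with every proper push-forward (in particular with $\pi_{n*}$, the maps $d^r_*$, and the power operations $P_k$), and is a ring homomorphism for the Pontrjagin product $\odot$ because $(1-y)^{-(k+l)}=(1-y)^{-k}(1-y)^{-l}$; consequently $\mathcal{N}$ commutes with the Pontrjagin exponential $\exp_\odot$. The two inputs I would quote from \cite{BSY} are that the degree $2k$ component of the un-normalized class $T_{(-y)*}(Z)$ is divisible by $(1-y)^{k}$ (so $\widetilde{T}_{(-y)*}(Z)\in H^{BM}_{even}(Z)\otimes\Q[y]$ is regular at $y=1$), and that $\widetilde{T}_{(-1)*}=c_*\otimes\Q$ as a transformation — the latter holding even for the singular Hilbert schemes $X^{[n]}$, which is precisely why this specialization, unlike those to $L_*$ or $Td_*$, is valid without smoothness.

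The heart of the matter is how $\mathcal{N}$ interacts with the homological Adams operations $\Psi_r$ appearing in the exponentiation of Theorem \ref{thm1}. I would compute the conjugate $\widetilde{\Psi}_r:=\mathcal{N}\circ\Psi_r\circ\mathcal{N}^{-1}$: for $c\in H^{BM}_{2k}(-;\Q)$,
\[
\widetilde{\Psi}_r(c\cdot y^{m})=\frac{1}{r^{k}}\left(\frac{1-y^{r}}{1-y}\right)^{k} c\cdot y^{mr}=\frac{1}{r^{k}}\,(1+y+\cdots+y^{r-1})^{k}\, c\cdot y^{mr}\,,
\]
which is polynomial in $y$, regular at $y=1$, and satisfies $\widetilde{\Psi}_r\big|_{y=1}=\mathrm{id}$, since $(1+y+\cdots+y^{r-1})^{k}\big|_{y=1}=r^{k}$ cancels the factor $r^{-k}$. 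This is the crucial point: after normalization the homological Adams operations trivialize at $y=1$, which is exactly what turns the Adams-twisted exponentiation of Theorem \ref{thm1} into the untwisted operation $(1-t\cdot d_*)^{-(\cdot)}=\exp\bigl(\sum_r d^r_*(\cdot)\,t^r/r\bigr)$ of the Corollary. On scalars $s(y)\in\Q[y]$ one has $\widetilde{\Psi}_r(s)=s(y^r)$, so that $\Psi_r\bigl(\chi_{-y}(\alpha_k)\bigr)\big|_{y=1}=\chi_{-y}(\alpha_k)(1)=\chi(\alpha_k)$.

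With these facts I would apply $\mathcal{N}$ to both sides of Theorem \ref{thm1}. On the left, $\mathcal{N}$ commutes with $\pi_{n*}$ and sends $T_{(-y)*}(X^{[n]})$ to $\widetilde{T}_{(-y)*}(X^{[n]})$, giving the regular-at-$y=1$ series $\sum_n \pi_{n*}\widetilde{T}_{(-y)*}(X^{[n]})\,t^{n}$, which at $y=1$ becomes $\sum_n \pi_{n*}c_*(X^{[n]})\,t^{n}$. On the right, since $\mathcal{N}$ is a $\odot$-homomorphism commuting with $\exp_\odot$ and with $d^{rk}_*$, and since $\mathcal{N}\Psi_r=\widetilde{\Psi}_r\mathcal{N}$, each building block $\exp\bigl(\sum_r d^{rk}_*(\Psi_r(\chi_{-y}(\alpha_k)\cdot T_{(-y)*}(X)))\,t^{rk}/r\bigr)$ is rewritten as $\exp\bigl(\sum_r d^{rk}_*(\widetilde{\Psi}_r(\chi_{-y}(\alpha_k)\cdot\widetilde{T}_{(-y)*}(X)))\,t^{rk}/r\bigr)$. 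Setting $y=1$ and using $\widetilde{\Psi}_r|_{y=1}=\mathrm{id}$, $\widetilde{T}_{(-1)*}(X)=c_*(X)$, and $\chi_{-y}(\alpha_k)|_{y=1}=\chi(\alpha_k)$, this collapses to $\exp\bigl(\sum_r d^{rk}_*(c_*(X))\,t^{rk}/r\bigr)$ weighted by $\chi(\alpha_k)$, i.e.\ precisely $\prod_k (1-t^k\cdot d^k_*)^{-\chi(\alpha_k)\cdot c_*(X)}$, which is the asserted formula.

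The main obstacle is the middle step: controlling the interaction of the normalization with the Adams operations and verifying that the poles $(1-y)^{-k}$ introduced by $\mathcal{N}$ cancel so that both sides are regular at $y=1$. This rests on two facts that I would isolate as lemmas — the divisibility of the un-normalized classes by powers of $(1-y)$ (guaranteeing regularity of the normalized generating series), and the identity $\widetilde{\Psi}_r|_{y=1}=\mathrm{id}$ (guaranteeing the disappearance of the $1/r^{k}$ factors) — after which the passage from the twisted to the untwisted exponentiation, and hence the whole Corollary, is a formal consequence of $\mathcal{N}$ being a $\odot$-ring homomorphism compatible with the Euler-product/power-structure definition of the exponentiation.
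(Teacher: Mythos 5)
Your proposal is correct and follows essentially the same route as the paper: apply the normalization functor $\Psi_{(1-y)}$ to both sides of Theorem \ref{thm1}, use that it commutes with proper push-forwards, the Pontrjagin product (hence the exponential) and the power operations $P_k$, and specialize at $y=1$, where the normalization cancels the homological Adams operations. The only difference is that the paper imports the key cancellation $\lim_{y\to 1}\Psi_{(1-y)}\Psi_r T_{(-y)*}(-)=c_*(-)\otimes\Q$ from \cite[Lemma 4.2]{CMSSY}, whereas you verify it directly via the conjugation $\Psi_{(1-y)}\circ\Psi_r\circ\Psi_{(1-y)}^{-1}$ and the identity $\bigl(\tfrac{1-y^r}{1-y}\bigr)^k\big|_{y=1}=r^k$, which is a correct, self-contained substitute.
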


In particular, for a smooth surface $X$, we have $\chi(\alpha_k)=\chi([\C]^{k-1})=1$, so that 
\begin{equation}
\sum_{n=0}^{\infty} {\pi_n}_*c_*(X^{[n]}) \cdot t^n=\prod_{k=1}^{\infty} (1-t^k \cdot d^k_*)^{- c_*(X)}.
\end{equation}
In particular, this recovers Ohmoto's formula \cite{Oh} for the generating series of the {\it orbifold Chern classes}  of symmetric products (used in \cite{BNW}), via the identification 
$${\pi_n}_*c_*(X^{[n]})= c^{orb}_*(X^{(n)})$$ given by the crepant resolution $\pi_n:X^{[n]} \to X^{(n)}$.

Moreover, for a smooth $3$-fold $X$, 
although the coefficients $\alpha_k$ are unknown so far, their Euler characteristics can be deduced from 
Cheah \cite{C} or Behrend-Fantechi \cite{BF}, $\chi(\alpha_k)=k$, i.e.,  a MacMahon type Chern class formula simply arises: 
\begin{equation}\label{200}
\sum_{n=0}^{\infty} {\pi_n}_*c_*(X^{[n]}) \cdot t^n=\prod_{k=1}^{\infty} (1-t^k \cdot d^k_*)^{- k \cdot c_*(X)}.
\end{equation}


Our strategy for proving the generating series formula of Theorem \ref{thm1} is based on a nice interplay between our geometric definition of a {\it motivic exponentiation} (generalizing the power structure of \cite{GLM0,GLM}) and a {\it motivic Pontrjagin ring} of the symmetric products, as well as on
our generating series formula \cite{CMSSY} for the motivic Hirzebruch classes of  symmetric products. 

 The same method applies to the calculation of generating series formulae for the Hirzebruch classes of the push-forwards of ``virtual motives" ${\pi_n}_*[X^{[n]}]_{\rm relvir}$ of Hilbert schemes of a threefold $X$ in terms of virtual motives $[{\rm Hilb}^n_{\C^3,0}]_{\rm vir}$ of punctual Hilbert schemes, as introduced and studied in \cite{BBS}. We prove the following result:

\begin{theorem}\label{mainq} For any smooth quasi-projective threefold $X$ the following formula holds:
\begin{equation}
\sum_{n=0}^{\infty} {T_{(-y)}}_*({\pi_n}_*[X^{[n]}]_{\rm relvir}) \cdot t^n = \left(1+ \sum_{n=1}^{\infty} \chi_{-y}([{\rm Hilb}^n_{\C^3,0}]_{\rm vir}) \cdot t^n \cdot d^n_* \right)^{{T_{(-y)}}_*(X)}
\end{equation}
Moreover, 
\begin{equation}\label{111a}
\sum_{n=0}^{\infty} {T_{(-y)}}_*({\pi_n}_*[X^{[n]}]_{\rm relvir}) \cdot (-t)^n =  \prod_{k=1}^{\infty} (1-t^k \cdot d^k_*)^{-\chi_{-y}(\alpha_k) \cdot {T_{(-y)}}_*(X)},
\end{equation}
with coefficients $\alpha_k \in K_0(var/{\C})[\LB^{-1/2}]$ given  by 
$$\alpha_k=\frac{(-\LB^{1/2})^{-k}-(-\LB^{1/2})^{k}}{\LB(1-\LB)}.$$
\end{theorem}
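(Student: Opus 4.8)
The plan is to follow the strategy used for Theorem \ref{thm1}, replacing the motivic identity (\ref{GLM}) by its ``virtual'' counterpart from \cite{BBS}. The geometric input is the power structure identity for the relative virtual motives,
\[
1+\sum_{n\geq 1}[X^{[n]}]_{\rm relvir}\cdot t^n=\left(1+\sum_{n\geq 1}[{\rm Hilb}^n_{\C^3,0}]_{\rm vir}\cdot t^n\right)^{[X]},
\]
taking place in the localized Grothendieck ring $K_0(var/\C)[\LB^{-1/2}]$ in which the virtual motives of \cite{BBS} are defined, where the punctual virtual motives $[{\rm Hilb}^n_{\C^3,0}]_{\rm vir}$ play the role of universal local coefficients and the exponentiation is the geometric power structure of \cite{GLM0,GLM} extended to the localization at $\LB^{1/2}$. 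This single identity is the only geometric ingredient that differs from the proof of Theorem \ref{thm1}; once it is available, the rest of the argument is formally the same.

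First I would extend the motivic exponentiation and Pontrjagin-ring formalism of \S\ref{motexp}, together with the Hirzebruch class transformation ${T_{(-y)}}_*$ and its pushforward to the symmetric products, to the ring $K_0(var/\C)[\LB^{-1/2}]$ and to coefficients in $\Q[y^{\pm 1/2}]$. The decisive naturality statement---that ${T_{(-y)}}_*$ carries the geometric motivic exponentiation to the homological exponentiation (\ref{exp-homology2}), turning the base $[X]$ into ${T_{(-y)}}_*(X)$ and each local coefficient $[{\rm Hilb}^n_{\C^3,0}]_{\rm vir}$ into its $\chi_{-y}$-value---is precisely the commutative-diagram argument already established for Theorem \ref{thm1}. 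Applying it to the displayed virtual identity and pushing forward under the Hilbert--Chow morphisms $\pi_n$ yields the first formula of the theorem.

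To obtain the Euler product (\ref{111a}), I would determine the exponents in the decomposition
\[
1+\sum_{n\geq 1}[{\rm Hilb}^n_{\C^3,0}]_{\rm vir}\cdot(-t)^n=\prod_{k\geq 1}(1-t^k)^{-\alpha_k},
\]
where the substitution $t\mapsto -t$ and the occurrence of $\LB^{1/2}$ reflect the balanced normalization of the virtual motives in \cite{BBS}. Starting from the explicit product formula of \cite{BBS} for $\sum_n[{\rm Hilb}^n_{\C^3,0}]_{\rm vir}\cdot t^n$ and inverting it by Gorsky's formula \cite{Go}, one extracts the closed expression $\alpha_k=\tfrac{(-\LB^{1/2})^{-k}-(-\LB^{1/2})^{k}}{\LB(1-\LB)}$. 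Since the homological exponentiation (\ref{exp-homology2}) depends on these coefficients only through $\chi_{-y}(\alpha_k)$, substituting them (and using $\chi_{-y}(\LB)=y$, so that $-\LB^{1/2}\mapsto -y^{1/2}$) produces (\ref{111a}).

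The main obstacle is the first paragraph: establishing the BBS factorization at the level of \emph{relative} virtual motives over the symmetric products, and verifying that neither the localization at $\LB^{1/2}$ nor the vanishing-cycle/monodromy structure disturbs the power structure or the Hirzebruch transformation. Concretely, one must check that ${T_{(-y)}}_*$ (and its $\chi_{-y}$ specialization over a point) extends to $K_0(var/\C)[\LB^{-1/2}]$ compatibly with the Adams operations $\Psi_r$ of (\ref{exp-homology}), so that $\Psi_r(\LB^{1/2})$ is consistent with $\Psi_r(\LB)=\LB^r$ and the half-integer powers of $y$ appearing in $\chi_{-y}(\alpha_k)$ are handled correctly, and that the pushforward of virtual motives commutes with the symmetric-product (Pontrjagin) structure. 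Once these compatibilities are in place, the remaining sign and half-power bookkeeping leading to the stated closed form of $\alpha_k$ is routine.
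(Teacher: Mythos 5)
Your overall strategy (apply the Hirzebruch transformation, via the class-level power-structure compatibility of Theorem \ref{mt2}, to a BBS-type virtual factorization, then compute the Euler exponents) is the paper's strategy, but your first paragraph contains a genuine gap, and it is the one you yourself flag as the ``main obstacle'': for a general smooth quasi-projective threefold $X$, the relative virtual motive $[X^{[n]}]_{\rm relvir}$ is \emph{not defined}, because $X^{[n]}$ is not known to be a global degeneracy locus $\{df=0\}$, so there is no motivic vanishing cycle class to work with. Consequently the factorization you propose to ``establish'' is not merely difficult but ill-posed: its left-hand side does not exist. The paper's resolution is definitional, not deductive: only for $X=\C^3$ is the relative factorization a theorem (from the proof of \cite{BBS}[Prop.2.6], formula (\ref{mainb}), where crucially the exponent is the \emph{space} $\C^3$ -- i.e.\ the motivic exponentiation valued in the Pontrjagin ring $PK_0(var/\C^3)[\LB^{-1/2}]$ over the symmetric products -- not the class $[\C^3]$); for general $X$ the paper \emph{defines} ${\pi_n}_*[X^{[n]}]_{\rm relvir} \in K_0(var/\Xs)[\LB^{-1/2}]$ by formula (\ref{maine}), in imitation of Theorem \ref{Hilbert-BBS}. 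The first identity of the theorem then follows from this definition together with the extension of Theorem \ref{mt2} to coefficients in $K_0(var/\C)[\LB^{-1/2}]$, which the paper constructs geometrically in two steps ($\Z_2$-graded varieties with the equivariant external products of \cite{BBS}[Lem.1.4], then half-powers of $\LB$). This also explains why the statement reads ${T_{(-y)}}_*({\pi_n}_*[X^{[n]}]_{\rm relvir})$ rather than ${\pi_n}_*{T_{(-y)}}_*([X^{[n]}]_{\rm relvir})$: the latter makes no sense except for $X=\C^3$, so your plan to apply the transformation and then ``push forward under the Hilbert--Chow morphisms'' cannot be carried out. Note also that your displayed identity, with exponent the class $[X]$, is the absolute statement \cite{BBS}[Prop.3.2] (formula (\ref{mainf})), which lives in $K_0(var/\C)[\LB^{-1/2}][[t]]$ and retains no information over the symmetric products; nothing can be pushed forward from it.

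Two secondary corrections. First, Gorsky's inversion formula is neither used nor needed for the Euler exponents: the proof of \cite{BBS}[Thm.3.3] exhibits the punctual virtual series, after the substitution $t \mapsto -t$ (justified on the relative level by the rule (\ref{99})), as an explicit product of factors of the form $(1-\LB^{a}t^{m})^{-1}$, and each such factor is already $(1-t^{m})^{-\LB^{a}}$ for the motivic pre-lambda structure; summing the resulting geometric series in $-\LB^{1/2}$ gives the closed form of $\alpha_k$ directly. Second, your sign convention is wrong: the paper fixes $\chi_{-y}(-\LB^{1/2}):=y^{1/2}$ and $\Psi_r(y^{1/2}):=y^{r/2}$, which at $y=1$ matches the BBS convention $\chi(\LB^{1/2})=-1$; your substitution $-\LB^{1/2}\mapsto -y^{1/2}$ makes $\chi_{-y}(\alpha_k)$ off by a factor $(-1)^k$, which at $y=1$ yields $(-1)^k k$ instead of $k$ and would destroy the MacMahon-type specialization of Corollary \ref{mainp}. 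So the ``routine sign and half-power bookkeeping'' of your last sentence is precisely where your sketch, as written, goes wrong.
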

\noindent Here we use the convention $\chi_{-y}(-\LB^{1/2}):=y^{1/2}$ and $\Psi_r(y^{1/2}):=y^{r/2}$, fitting with the convention $\chi(\LB^{1/2}):=-1$ used in \cite{BBS}. After a suitable re-normalization, and by specializing to $y=1$, we get the following virtual counterpart of Corollary \ref{cormain2} for the {\it Aluffi classes} $c_*^A(X^{[n]})$ of the Hilbert schemes, as introduced in \cite{Beh}.

\begin{cor}\label{mainp} For any smooth quasi-projective threefold $X$ the following formula holds:
\begin{equation}\label{333}
\sum_{n=0}^{\infty} {\pi_n}_*(c_*^A(X^{[n]})) \cdot (-t)^n =  \prod_{k=1}^{\infty} (1-t^k \cdot d^k_*)^{-k \cdot {c}_*(X)}.
\end{equation}
\end{cor}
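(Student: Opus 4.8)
The plan is to obtain Corollary \ref{mainp} by specializing the Hirzebruch-class identity (\ref{111a}) of Theorem \ref{mainq} at $y=1$, in exactly the same way that Corollary \ref{cormain2} is extracted from Theorem \ref{thm1}. Concretely, I would apply the Brasselet--Schürmann--Yokura normalization transformation to both sides of (\ref{111a}) and then set $y=1$. On the exponent side, the un-normalized class ${T_{(-y)}}_*(X)$ of the \emph{smooth} variety $X$ normalizes and specializes at $y=1$ to the rationalized MacPherson--Chern class $c_*(X)=c^*(TX)\cap[X]$, while the scalar coefficients $\chi_{-y}(\alpha_k)$ specialize to $\chi(\alpha_k)$. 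Simultaneously, the homological exponentiation (\ref{exp-homology2}) collapses: since the Adams operations act on the parameter by $\Psi_r(y)=y^r$, setting $y=1$ trivializes their action on scalars, so that $(1-t^k\cdot d^k_*)^{-(\cdot)}$ reduces to the Chern-class operation $\exp\!\left(\sum_{r\geq1} d^{rk}_*(\cdot)\,t^{rk}/r\right)$ appearing in Corollary \ref{cormain2}. Thus the right-hand side becomes $\prod_{k\geq1}(1-t^k\cdot d^k_*)^{-\chi(\alpha_k)\cdot c_*(X)}$.

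Next I would identify the specialized left-hand side with the Aluffi classes. The point is that, by the construction of $c_*^A$ through the Behrend function and the virtual motive in \cite{Beh,BBS}, the normalized Hirzebruch class of the relative virtual motive ${\pi_n}_*[X^{[n]}]_{\rm relvir}$, evaluated at $y=1$, equals ${\pi_n}_*c_*^A(X^{[n]})$. Here one uses that the normalization transformation commutes with the proper push-forward ${\pi_n}_*$, and that the conventions $\chi_{-y}(-\LB^{1/2})=y^{1/2}$ and $\Psi_r(y^{1/2})=y^{r/2}$ make the whole left-hand side a Laurent series in $y^{1/2}$ that is \emph{regular} at $y=1$. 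Granting this, the left-hand side of (\ref{111a}) specializes to $\sum_{n\geq0}{\pi_n}_*c_*^A(X^{[n]})\cdot(-t)^n$, which is the left-hand side of (\ref{333}).

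It remains to compute the exponents $\chi(\alpha_k)$, and here the explicit formula for $\alpha_k$ provided in Theorem \ref{mainq} does the work. Applying $\chi_{-y}$ with $\chi_{-y}(-\LB^{1/2})=y^{1/2}$, hence $\chi_{-y}(\LB)=y$, gives
$$\chi_{-y}(\alpha_k)=\frac{y^{-k/2}-y^{k/2}}{y(1-y)}=\frac{\sum_{j=0}^{k-1}y^{j}}{y^{(k+2)/2}},$$
which is a Laurent polynomial in $y^{1/2}$ with value $k$ at $y=1$. Hence $\chi(\alpha_k)=\lim_{y\to1}\chi_{-y}(\alpha_k)=k$, recovering the MacMahon-type exponent already anticipated in (\ref{200}) and consistent with the Behrend--Fantechi and Cheah computations. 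Substituting $\chi(\alpha_k)=k$ and $T_{(-y)*}(X)|_{y=1}=c_*(X)$ into the normalized identity yields precisely formula (\ref{333}).

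I expect the main obstacle to be the identification carried out in the second paragraph, namely that the $y=1$ specialization of the \emph{normalized} Hirzebruch class of the virtual motive is well defined and coincides with the Aluffi class. The delicacy is twofold: one must verify that the individual specializations $\chi_{-y}(\alpha_k)$ being regular at $y=1$ (as shown above) indeed propagate to regularity of the full class ${T_{(-y)}}_*({\pi_n}_*[X^{[n]}]_{\rm relvir})$ as a Laurent series in $y^{1/2}$, so that the limit $y\to1$ may be taken term by term and commutes with the infinite Pontrjagin product and with ${\pi_n}_*$; and one must reconcile the $\LB^{1/2}$-normalization built into $[X^{[n]}]_{\rm relvir}$ with the normalization convention defining $c_*^A$ in \cite{Beh}. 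Once these compatibilities are in place, the rest is the purely formal specialization argument above.
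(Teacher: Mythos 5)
Your specialization machinery is correct and matches the paper's: the normalization $\Psi_{(1-y)}$ commutes with proper push-forward, exterior (hence Pontrjagin) products and the power operations $P_k$, and by \cite[Lemma 4.2]{CMSSY} (formula (\ref{cmssy}) in the paper) the limit $y \to 1$ of $\Psi_{(1-y)}\Psi_r T_{(-y)*}$ cancels the Adams operations and produces $c_* \otimes \Q$, so the right-hand side of (\ref{111a}) does collapse to $\prod_k (1-t^k d^k_*)^{-\chi(\alpha_k)\cdot c_*(X)}$. Your computation $\chi_{-y}(\alpha_k)=\bigl(y^{-k/2}-y^{k/2}\bigr)/\bigl(y(1-y)\bigr)$, regular at $y=1$ with value $k$, is also exactly the paper's evaluation of the exponents, using the same conventions $\chi_{-y}(-\LB^{1/2})=y^{1/2}$ and $\chi(\LB^{1/2})=-1$.

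The genuine gap is the step you yourself flag and then leave unresolved: the identification of the specialized left-hand side with $\sum_n {\pi_n}_* c_*^A(X^{[n]})\,(-t)^n$. This is not something that can be extracted from ``the construction of $c_*^A$ in \cite{Beh,BBS}'' plus a regularity-in-$y^{1/2}$ analysis, for two reasons. First, for a general threefold $X$ the class $[X^{[n]}]_{\rm relvir}$ is not even defined -- only its push-forward ${\pi_n}_*[X^{[n]}]_{\rm relvir}$ is, and only via the defining generating series (\ref{maine}) -- so the claim that its Hirzebruch class at $y=1$ equals ${\pi_n}_*c_*^A(X^{[n]})$ has no a priori meaning to appeal to; it is precisely what must be proved. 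Second, the paper does not argue by taking a $y \to 1$ limit of the full virtual class at all: it factorizes the motivic Chern class transformation through constructible functions, $K_0(var/-)[\LB^{-1/2}] \overset{e}{\to} CF(-) \overset{c_*}{\to} H^{BM}_{ev}(-;\Q)$ with $e(\LB^{-1/2})=-1$, and invokes the compatibility of $e$ with vanishing cycles in the motivic and constructible-function contexts (\cite{CMSS,Sch}) to obtain the key identity (\ref{119}), $e({\pi_n}_*[X^{[n]}]_{\rm relvir})={\pi_n}_*\nu_{X^{[n]}}$; then $c_*^A(X^{[n]})=c_*(\nu_{X^{[n]}})$ by definition and proper functoriality (\ref{112}) finish the identification, with the exponent side handled by the Chern-class exponentiation formula (Theorem \ref{mt3}, itself obtained by the limit argument you describe). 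Without the vanishing-cycle compatibility of $e$ -- the one nontrivial geometric input, which your proposal never invokes -- no amount of term-by-term regularity of the Laurent series in $y^{1/2}$ will identify the limit with the Chern class of the Behrend function. With that ingredient added, your argument closes and agrees with the paper's.
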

This is a class version of the following degree formula:
\begin{equation}\label{334}
\sum_{n=0}^{\infty}  {\rm deg}(c_0^A(X^{[n]})) \cdot (-t)^n =  \prod_{k=1}^{\infty} (1-t^k)^{-k \cdot \chi(X)}
 =  \left(\prod_{k=1}^{\infty} (1-t^k)^{-k}\right)^{\chi(X)} =:M(t)^{\chi(X)},
\end{equation}
which we obtain for a smooth projective variety $X$ by pushing formula (\ref{333}) down to a point. Here
$M(t)$ denotes the classical MacMahon function. For a projective Calabi-Yau threefold $X$, formula (\ref{334}) is nothing else but the famous {\it dimension zero MNOP conjecture} \cite{MNOP}, because in this case the perfect obstruction theory of the Hilbert scheme $X^{[n]}$ is symmetric, so that the virtual Euler characteristic is exactly the degree of the Aluffi class, see \cite{Beh}:
$$ \chi^{\rm vir}(X^{[n]})={\rm deg}(c_0^A(X^{[n]}).$$
The dimension zero MNOP conjecture was already proved by different groups of authors by different methods, e.g., see \cite{BBS,BF} and the references therein.

\begin{ack} S. Cappell is partially supported by DARPA-25-74200-F6188. L. Maxim is partially supported by NSF-1005338. T. Ohmoto is supported by the JSPS grant No. 21540057.
J. Sch\"urmann is supported by the
SFB 878 ``groups, geometry and actions". S. Yokura is partially supported by Grant-in-Aid for Scientific Research (No. 21540088), the Ministry of Education,
Culture, Sports, Science and Technology (MEXT), and JSPS Core-to-Core Program 18005, Japan.
\end{ack}


\section{Power structures} 

The proof of our main result, Theorem \ref{thm1}, is based on a refinement of the geometric power structure over the Grothendieck (semi-)ring of complex algebraic varieties, which was introduced in \cite{GLM0,GLM}. We recall here the relevant facts about power structures on (semi-)\-rings.

\begin{defn} A power structure over a (semi-)ring $R$ is a map
$$\left( 1+ t R[[t]] \right) \times R \to 1+ t R[[t]] , \ \ \ (A(t), m) \mapsto \left(A(t)\right)^m$$ 
satisfying the following properties:
\begin{enumerate}
\item[(i)] $\left(A(t)\right)^0=1$,
\item[(ii)] $\left(A(t)\right)^1=A(t)$,
\item[(iii)] $\left(A(t) \cdot B(t)\right)^m=\left(A(t)\right)^m \cdot \left(B(t)\right)^m$,
\item[(iv)] $\left(A(t)\right)^{m+n}=\left(A(t)\right)^m \cdot \left(A(t)\right)^n$,
\item[(v)] $\left(A(t)\right)^{mn}=\left( \left(A(t)\right)^n \right)^m$,
\item[(vi)] $(1+t)^m=1+mt+ {\rm higher \ order \ terms}$,
\item[(vii)] $\left(A(t^k)\right)^m=\left(A(t)\right)^m\vert_{t \mapsto t^k}$.
\end{enumerate}
\end{defn}

The geometric definition of a motivic power structure is given by the following result
(\cite{GLM0,GLM}):

\begin{theorem}[Gusein-Zade, Luengo,  Melle-Hern\'andez]\label{geom-power}  
Let  $K_0(var/\C)$ resp. $S_0(var/\C)$ be the Grothendieck (semi-)ring of complex quasi-projective varieties, i.e., the (semi-)\-group generated by the isomorphism classes $[X]$ of such varieties modulo the relation $[X]=[Y]+[X \setminus Y]$ for a Zariski closed subvariety $Y \subset X$, and with the multiplication defined by the cartesian product: $[X_1] \cdot [X_2]:=[X_1 \times X_2]$.
Then for a power series $$A(t)=1+\sum_{i=1}^{\infty} [A_i]t^i \in S_0(var/\C)[[t]]$$ and $[X] \in S_0(var/\C)$, the following expression defines a power structure on $S_0(var/\C)$:
\begin{equation}\label{ps}
\left( A(t) \right)^{[X]}:=1+\sum_{n=1}^{\infty} \left\{  \sum_{\underbar{k} \ :  \sum ik_i=n} \left( (\prod_i X^{k_i}) \setminus \Delta \right) \times \prod_i A_i^{k_i} / \prod_i S_{k_i} \right\} \cdot t^n,
\end{equation}
where $\underbar{k}=\{k_i: i \in \Z_{>0}, k_i \in \Z_{\geq 0} \}$ and $\Delta$ is the large diagonal in $X^{\sum_i k_i}$ consisting of $(\sum_i k_i)$-tuples of points of $X$ with at least two coinciding ones. 
Here 
the symmetric group $S_{k_i}$ acts by permuting the corresponding $k_i$ factors in $\prod_i X^{k_i} \supset (\prod_i X^{k_i}) \setminus \Delta$ and the spaces $A_i$ simultaneously.

This power structure on $S_0(var/\C)$ can be uniquely extended to a power structures on $K_0(var/\C)$, as well as on
the localization $\MC:=K_0(var/\C)[\LB^{-1}]$ of  $K_0(var/\C)$ with respect to the affine line $\LB:=[\C]$.
\end{theorem}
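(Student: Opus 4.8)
The plan is to verify the seven axioms (i)--(vii) directly for the operation (\ref{ps}) on the semiring $S_0(var/\C)$, and then to extend uniquely to $K_0(var/\C)$ and to $\MC$. The device that makes all checks uniform is to read the right-hand side of (\ref{ps}) as a generating series of \emph{colored configuration spaces}. Set $\mathcal{A}:=\bigsqcup_{i\geq 1}A_i$, equipped with the weight function equal to $i$ on the piece $A_i$, and for a variety $X$ let $C_n(X,\mathcal{A})$ be the variety of finite subsets $S\subset X\times\mathcal{A}$ whose images in $X$ are pairwise distinct and whose total weight is $n$. Sorting the points of such a subset by which $A_i$ decorates them, with $k_i$ points of weight $i$, recovers exactly the stratum $((\prod_i X^{k_i})\setminus\Delta)\times\prod_i A_i^{k_i}/\prod_i S_{k_i}$, so that $(A(t))^{[X]}=1+\sum_{n\geq 1}[C_n(X,\mathcal{A})]\,t^n$. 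In this language every axiom acquires a transparent geometric meaning.

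I would first dispose of the elementary axioms. For (i) and (ii) one inspects $C_n(\emptyset,\mathcal{A})$, which is empty in positive degree, and $C_n(\{pt\},\mathcal{A})$, where distinctness of the $X$-images forces a single decorated point and returns $A(t)$; axiom (vi) is the degree-one instance of the same bookkeeping. Axiom (vii) holds because passing from $A(t)$ to $A(t^k)$ multiplies the weight function by $k$, so $C_n(X,\mathcal{A})$ contributes only in degrees divisible by $k$. Axiom (iv) is the cleanest of the substantive ones: since $X$ and $Y$ are open and closed in $X\sqcup Y$, a configuration on $X\sqcup Y$ splits canonically into its parts over $X$ and over $Y$, giving a genuine isomorphism $C_n(X\sqcup Y,\mathcal{A})\cong\bigsqcup_{p+q=n}C_p(X,\mathcal{A})\times C_q(Y,\mathcal{A})$ and hence the product formula.

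The heart of the proof is axioms (iii) and (v). For (iii) I would use a \emph{merge/split} correspondence: the coefficients of $A(t)B(t)$ are the convolutions $\sum_{j+l=i}A_jB_l$ (with $A_0=B_0=\{pt\}$), and given a configuration for $A(t)$ and a configuration for $B(t)$ on $X$ whose $X$-images are internally distinct but may coincide between the two, one fuses the points sharing an $X$-image into a single point decorated by the corresponding pair. For (v) I would group a configuration on $X\times Y$ by its $X$-coordinate: the points lying over a fixed $x$ form a nonempty configuration on $Y$, that is, a decoration by an element of the base series $(A(t))^{[Y]}$, which yields $(A(t))^{[X][Y]}=((A(t))^{[Y]})^{[X]}$ once total weights are matched. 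In both cases the correspondence is a bijection on points, but --- unlike in (iv) --- the locus where two $X$-coordinates coincide is a \emph{closed} condition, so the map is only a piecewise isomorphism. The main obstacle is precisely to upgrade these set-theoretic bijections to identities of Grothendieck classes: one must stratify by the coincidence (respectively grouping) pattern, check that the strata are locally closed and that the induced maps are algebraic isomorphisms, and confirm that the classes of the strata assemble, via the scissor relations $[X]=[Y]+[X\setminus Y]$, into the two sides of the asserted identity, so that it already holds in $S_0(var/\C)$.

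It remains to extend the structure. Once (i)--(vii) hold over $S_0(var/\C)$, axiom (iv) exhibits $m\mapsto(A(t))^m$ as a homomorphism from the additive monoid $(S_0(var/\C),+)$ into the abelian group $(1+t\,K_0(var/\C)[[t]],\cdot)$ of power series with constant term $1$; by the universal property of the group completion it extends uniquely to $K_0(var/\C)$ via $(A(t))^{[X]-[Y]}:=(A(t))^{[X]}/(A(t))^{[Y]}$, and the remaining axioms pass over formally. To reach $\MC=K_0(var/\C)[\LB^{-1}]$ I would show that the $\LB$-power operation $B\mapsto B^{\LB}$ on $1+t\,\MC[[t]]$ is bijective: by (vi) the degree-$n$ coefficient of $B^{\LB}$ equals $\LB\cdot b_n$ plus a polynomial in $b_1,\dots,b_{n-1}$, and since $\LB$ is invertible in $\MC$ this system is solvable recursively. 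Powers by $\LB^{-1}$ are then defined and pinned down as preimages under this map, using axiom (v) to see that $A^{\LB^{-1}m}$ is forced, and powers by an arbitrary element $m/\LB^k$ of $\MC$ follow, giving both existence and uniqueness of the extension.
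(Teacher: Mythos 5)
The paper itself offers no proof of this statement --- it is quoted from \cite{GLM0,GLM} --- so your proposal must be measured against the original argument, and on the semiring part it reconstructs that argument faithfully. The colored-configuration reading of (\ref{ps}) is exactly the geometric mechanism of Gusein-Zade--Luengo--Melle-Hern\'andez; your checks of (i), (ii), (iv), (vi), (vii), and your merge/split correspondences for (iii) and (v) --- together with the correct insistence that these set-theoretic bijections become identities in $S_0(var/\C)$ only after stratifying by the coincidence pattern into locally closed strata mapped isomorphically --- are sound. One item you should make explicit with the same technique: formula (\ref{ps}) is written in terms of chosen representatives $X$ and $A_i$, so one also owes the verification that the outcome depends only on the classes $[X]$ and $[A_i]$ (invariance under cutting into locally closed pieces); the paper flags precisely this dependence issue in \S\ref{motexp} for its relative refinement.

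The genuine gap is in the extension step. A power structure on a ring $R$ is a map $\left(1+tR[[t]]\right)\times R\to 1+tR[[t]]$, so extending to $K_0(var/\C)$ and to $\MC$ requires extending in the \emph{base} as well as in the exponent, and your group-completion argument handles only the exponent: for $A(t)$ with non-effective coefficients, $(A(t))^m$ is never defined by your construction. Worse, your $\MC$ step trades on this omission: the recursion showing $B\mapsto B^{\LB}$ is bijective is run on $1+t\,\MC[[t]]$, i.e., it already presupposes bases with localized coefficients, which at that stage you have not constructed. The repair is the one the paper itself records (diagram (\ref{diag}) in \S\ref{motexp}, following \cite{GLM0,GLM}): every normalized $A(t)\in 1+tK_0(var/\C)[[t]]$ factors as $B(t)\cdot C(t)^{-1}$ with $B,C$ in the image of $1+tS_0(var/\C)[[t]]$, and one sets $(A(t))^m:=(B(t))^m\cdot\left((C(t))^m\right)^{-1}$, well defined by property (iii) and multiplicativity in the exponent; for base coefficients in $\MC$ one additionally pulls powers of $\LB$ out of the symmetric quotients via the Zariski-local-triviality identity
\begin{equation*}
\left( \bigl(\prod_i X^{k_i}\bigr) \setminus \Delta \right) \times \prod_i \bigl(\LB^{c_i}\cdot A_i\bigr)^{k_i} \Big/ \prod_i S_{k_i}
=\LB^{\sum_i c_ik_i}\cdot \left( \bigl(\prod_i X^{k_i}\bigr) \setminus \Delta \right) \times \prod_i A_i^{k_i} \Big/ \prod_i S_{k_i},
\end{equation*}
which is exactly the device the paper deploys in Section 5 for coefficients in $K_0(var/\C)[\LB^{-1/2}]$. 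With both of these in place, your $\LB$-th-root recursion and the uniqueness argument via property (v) are correct as the final step for exponents in $\MC$; without them, the chain of extensions does not close.
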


\begin{example}\rm\label{ex1}
Let $X^{(n)}:=X^n / S_n$ denote the $n$-th symmetric product of a quasi-projective variety $X$. Then:
\begin{equation}
(1+t+t^2+ \cdots)^{[X]}=1+ \sum_{n=1}^{\infty} \left[{X^{(n)}}\right] \cdot t^n
\end{equation}
\end{example}

\begin{example}\rm\label{ex2}
Let $X^{\{n\}}:=(X^n\setminus \Delta) / S_n$ denote the configuration space of $n$ distinct unlabeled points on a quasi-projective variety $X$ (where $\Delta$ is the large diagonal in $X^n$). Then:
\begin{equation}
(1+t)^{[X]}=1+ \sum_{n=1}^{\infty} \left[{X^{\{n\}}}\right] \cdot t^n
\end{equation}
\end{example}

\begin{defn} A pre-lambda structure on a commutative ring $R$ is a group homomorphism 
$$\lambda_t:(R,+) \to \left( 1+ t R[[t]] , \cdot \right)$$
so that $\lambda_t(m)=1+mt \ ({\rm mod} \  t^2)$. A pre-lambda ring homomorphism is a ring homomorphism between pre-lambda rings which commutes with the pre-lambda structures.
\end{defn}

\begin{example}\rm\label{ex3} The {\it Kapranov zeta function} \cite{K}
$$\lambda_t(X):=1+ \sum_{n=1}^{\infty} \left[{X^{(n)}}\right] \cdot t^n$$
defines a pre-lambda structure on $K_0(var/{\C})$.
\end{example}

\begin{remark}\rm\label{rem1} A pre-lambda structure $\lambda_t(\cdot)=:(1-t)^{-(\cdot)}$ on a ring $R$ determines algebraically a power structure $\left(A(t)\right)^m$ on $R$ via the Euler product decomposition. More precisely, a power series 
$A(t)=1+\sum_{i=1}^{\infty} a_i t^i \in R[[t]]$ admits a unique {\em Euler product decomposition} 
\begin{equation}\label{Euler-prod}
A(t)=\prod_{k=1}^{\infty}(1-t^k)^{-b_k}=\prod_{k=1}^{\infty}\left((1-t)^{-b_k}\vert_{t \mapsto t^k}\right)=:{\rm Exp} \left( \sum_{i \geq 1} b_it^i \right),
\end{equation}
 with $b_k \in R$. In fact, $${\rm Exp}: (tR[[t]],+) \overset{\cong}{\to} (1+tR[[t]], \cdot)$$ 
 defines a group isomorphism.
A power structure on $R$ can now be uniquely defined by using (iii) and (vii) by:
\begin{equation}\label{Euler-power}
\left(A(t)\right)^m:=\prod_{k=1}^{\infty}(1-t^k)^{-m \cdot b_k} .
\end{equation}
\end{remark}

From this point of view, the Kapranov zeta function is just (compare with Example \ref{ex1}):
$$\lambda_t(X)=(1-t)^{-[X]},$$
so that the geometric power structure on $K_0(var/\C)$ of Theorem \ref{geom-power} agrees
with the algebraically defined power structure associated to the pre-lambda structure defined
by the Kapranov zeta function.\\

A ring homomorphism $\phi :R_1 \to R_2$ induces a natural ring homomorphism $\phi:R_1[[t]] \to R_2[[t]]$ defined by $\phi(\sum_i a_it^i):=\sum_i \phi(a_i)t^i$. Then Remark \ref{rem1} yields the following:
\begin{prop}\label{prop1} A pre-lambda ring homomorphism $\phi:R_1 \to R_2$ respects the corresponding power structures, i.e., 
\begin{equation} \phi\left( A(t)^m \right) = \left( \phi(A(t)) \right)^{\phi(m)}.
\end{equation}
\end{prop}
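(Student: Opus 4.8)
The plan is to reduce the statement entirely to the algebraic description of the power structure via the Euler product decomposition recorded in Remark \ref{rem1}, and then to track how $\phi$ acts on the Euler coefficients. Concretely, I would write the Euler product decomposition of $A(t) = 1 + \sum_{i\geq 1} a_i t^i$ in $R_1$ as in \eqref{Euler-prod}, namely $A(t) = \prod_{k\geq 1}(1-t^k)^{-b_k}$ with $b_k \in R_1$, so that by \eqref{Euler-power} one has $A(t)^m = \prod_{k\geq 1}(1-t^k)^{-m b_k}$. The entire content of the proposition is then that $\phi$ carries this decomposition of $A(t)$ to the decomposition of $\phi(A(t))$, with Euler coefficients transformed by $\phi$.

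The key computation is to check that $\phi$ intertwines the operations $(1-t^k)^{-(\cdot)}$ on $R_1$ and $R_2$. First, the coefficient-wise map $\phi: R_1[[t]] \to R_2[[t]]$ commutes with the substitution $t \mapsto t^k$, since both operations act on coefficients independently of the powers of $t$; explicitly, for $f(t) = \sum_i c_i t^i$ one has $\phi(f(t^k)) = \sum_i \phi(c_i) t^{ik} = (\phi f)(t^k)$. Second, since $\phi$ is a pre-lambda ring homomorphism, it commutes with the pre-lambda structures, i.e. $\phi((1-t)^{-x}) = (1-t)^{-\phi(x)}$ for all $x \in R_1$. Combining these two facts with the definition $(1-t^k)^{-x} = (1-t)^{-x}\vert_{t\mapsto t^k}$ yields
$$\phi\bigl((1-t^k)^{-x}\bigr) = \phi\bigl((1-t)^{-x}\bigr)\big\vert_{t\mapsto t^k} = (1-t)^{-\phi(x)}\big\vert_{t\mapsto t^k} = (1-t^k)^{-\phi(x)}.$$

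With this in hand, I would finish as follows. Applying $\phi$ (a ring homomorphism, hence multiplicative and continuous for the $t$-adic topology, so that it respects the convergent infinite products, each factor being $1 + O(t^k)$) to the Euler decomposition of $A(t)$ gives $\phi(A(t)) = \prod_{k\geq 1}(1-t^k)^{-\phi(b_k)}$; by the uniqueness part of \eqref{Euler-prod} these are precisely the Euler coefficients of $\phi(A(t))$ in $R_2$. Therefore, using \eqref{Euler-power} in $R_2$ together with the multiplicativity $\phi(m)\phi(b_k)=\phi(mb_k)$, one computes
$$\bigl(\phi(A(t))\bigr)^{\phi(m)} = \prod_{k\geq 1}(1-t^k)^{-\phi(m)\phi(b_k)} = \prod_{k\geq 1}(1-t^k)^{-\phi(mb_k)} = \phi\Bigl(\prod_{k\geq 1}(1-t^k)^{-mb_k}\Bigr) = \phi\bigl(A(t)^m\bigr),$$
as desired. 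None of the steps presents a genuine obstacle; the only point requiring a little care is the bookkeeping for the infinite product---ensuring that $\phi$ commutes with it and that the uniqueness of the Euler decomposition applies---but this is immediate from $t$-adic convergence, since only finitely many factors contribute to each coefficient of $t^n$.
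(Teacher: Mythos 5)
Your proposal is correct and follows essentially the same route as the paper, which derives Proposition \ref{prop1} directly from Remark \ref{rem1}: since the power structure is defined algebraically by the Euler product decomposition $A(t)=\prod_{k\geq 1}(1-t^k)^{-b_k}$ attached to the pre-lambda structure, a pre-lambda ring homomorphism (commuting with $(1-t)^{-(\cdot)}$, with the substitution $t\mapsto t^k$, and with the $t$-adically convergent products) carries the decomposition and its exponents over, giving $\phi(A(t)^m)=(\phi(A(t)))^{\phi(m)}$. You merely make explicit the uniqueness and convergence bookkeeping that the paper leaves implicit, which is a sound elaboration rather than a different argument.
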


As an application of power structures 
the authors of \cite{GLM} prove the following result:
\begin{theorem}[Gusein-Zade, Luengo,  Melle-Hern\'andez] \label{Hilbert-GLM}
For a smooth quasi-projective variety $X$ of pure dimension $d$ 
the following identity holds in $K_0(var/{\C})[[t]]$:
\begin{equation}\label{eqmain}
1+\sum_{n \geq 1} [X^{[n]}] \cdot t^n=\left(1+\sum_{n \geq 1} \left[{\rm Hilb}^n_{\C^d,0}\right] \cdot t^n\right)^{[X]},
\end{equation}
where, as before, $X^{[n]}={\rm Hilb}^n_X$ is the Hilbert scheme of $n$ points on $X$, and ${\rm Hilb}^n_{\C^d,0}$ denotes the punctual Hilbert scheme of zero-dimensional subschemes of length $n$ supported at the origin of $\C^d$.
\end{theorem}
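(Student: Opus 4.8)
The plan is to prove the identity by evaluating both sides explicitly and comparing the coefficient of each $t^n$ in $K_0(var/{\C})$. The right-hand side is governed by the geometric power structure of Theorem~\ref{geom-power}: taking $A(t)=1+\sum_{i\geq 1}[{\rm Hilb}^i_{\C^d,0}]\,t^i$, so that $A_i={\rm Hilb}^i_{\C^d,0}$, formula (\ref{ps}) expresses the $n$-th coefficient of the right-hand side as a sum over tuples $\underbar{k}=\{k_i\}$ with $\sum_i i k_i=n$ of the classes $\big((\prod_i X^{k_i})\setminus\Delta\big)\times\prod_i({\rm Hilb}^i_{\C^d,0})^{k_i}/\prod_i S_{k_i}$. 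The whole proof then amounts to producing a matching \emph{geometric} stratification of the Hilbert scheme $X^{[n]}$ whose strata realize exactly these classes.

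First I would set up the support stratification of $X^{[n]}$. Any zero-dimensional subscheme $Z\subset X$ of length $n$ decomposes uniquely as a disjoint union $Z=\bigsqcup_{p}Z_p$ of subschemes $Z_p$ supported at the finitely many distinct points $p\in{\rm Supp}(Z)$, with $\sum_p {\rm length}(Z_p)=n$. To such a $Z$ I associate its \emph{type} $\underbar{k}=\{k_i\}$, where $k_i$ records the number of support points at which the local subscheme has length $i$; necessarily $\sum_i i k_i=n$. Grouping subschemes by type partitions $X^{[n]}$ into locally closed strata indexed by the admissible $\underbar{k}$, and in $K_0(var/{\C})$ the class $[X^{[n]}]$ is the sum of the classes of these strata.

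Next I would identify each stratum. Fixing $\underbar{k}$, the data of $Z$ consists of (a) an unordered collection of distinct support points carrying multiplicity labels --- namely $k_1$ points with a length-$1$ subscheme, $k_2$ with a length-$2$ subscheme, and so on, which is exactly a point of $\big((\prod_i X^{k_i})\setminus\Delta\big)/\prod_i S_{k_i}$, the removal of the large diagonal $\Delta$ encoding distinctness and the symmetric groups encoding that equal-length points are unordered; and (b) for each length-$i$ support point, a choice of punctual subscheme, i.e.\ a point of the punctual Hilbert scheme ${\rm Hilb}^i_{X,p}$. Since $X$ is smooth of pure dimension $d$, the isomorphism class of ${\rm Hilb}^i_{X,p}$ depends only on the completed local ring $\widehat{\OO}_{X,p}\cong\C[[x_1,\dots,x_d]]$, hence ${\rm Hilb}^i_{X,p}\cong{\rm Hilb}^i_{\C^d,0}$ for every $p$. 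Matching (a) and (b) against formula (\ref{ps}) term by term then yields the desired equality of coefficients.

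The technical heart --- and the step I expect to be the main obstacle --- is upgrading the pointwise isomorphism ${\rm Hilb}^i_{X,p}\cong{\rm Hilb}^i_{\C^d,0}$ to an identity of \emph{classes} in $K_0(var/{\C})$. Concretely, one must show that the relative punctual Hilbert scheme over $X$ is Zariski locally trivial with fibre ${\rm Hilb}^i_{\C^d,0}$, so that the class of each stratum factors as the product appearing in (\ref{ps}); a smooth variety is only \'etale-locally, not Zariski-locally, isomorphic to $\C^d$, so this requires care. The resolution uses that the punctual Hilbert functor depends only on the formal neighborhood, together with a spreading-out argument: covering $X$ by Zariski opens admitting \'etale coordinate maps to $\C^d$ and checking that the relative punctual Hilbert scheme trivializes after a further Zariski refinement. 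Granting this local-triviality lemma, the stratum-by-stratum comparison closes the proof.
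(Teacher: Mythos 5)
The paper itself contains no proof of Theorem \ref{Hilbert-GLM}: it is imported from \cite{GLM} (and only refined later, in \S\ref{motexp} and Theorem \ref{Hilbert-BBS}, to the relative setting over $\Xs$, from which the present statement is recovered by pushing forward to a point). So the relevant comparison is with the original argument of Gusein-Zade, Luengo and Melle-Hern\'andez, and your proposal reconstructs essentially that argument: stratify $X^{[n]}$ by the multiplicity type of the support cycle, match each stratum with the corresponding term of formula (\ref{ps}) for $A_i={\rm Hilb}^i_{\C^d,0}$, and reduce the class identity to Zariski local triviality of the relative punctual Hilbert scheme. Two refinements are worth recording. First, your ``main obstacle'' resolves more simply than your spreading-out sketch suggests: over $\C^d$ the relative punctual Hilbert scheme is \emph{globally} trivial via translations, and since a length-$i$ subscheme concentrated at a point lives in a fixed-order infinitesimal neighborhood of that point, any \'etale map $U \to \C^d$ identifies the relative punctual Hilbert scheme of $U$ with the pullback of the (trivial) one of $\C^d$; pullbacks of trivial bundles being trivial, covering $X$ by Zariski opens admitting \'etale maps to $\C^d$ already trivializes, with no further refinement needed. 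Second, one step you pass over in ``matching (a) and (b) term by term'' deserves explicit care: the stratum of type $\underbar{k}$ is not literally the product $\left( (\prod_i X^{k_i})\setminus\Delta \right) \times \prod_i ({\rm Hilb}^i_{\C^d,0})^{k_i} / \prod_i S_{k_i}$, but the quotient of a \emph{fibered} product over the configuration space; to equate the two classes in $K_0(var/\C)$ one needs trivializations compatible with the simultaneous $\prod_i S_{k_i}$-action. This is handled, as in \cite{GLM0,GLM}, by refining the trivializing cover into invariant locally closed pieces of the (free) off-diagonal part, on which trivializations chosen pointwise from a fixed cover are automatically equivariant. With these two points made explicit, your proof is complete and coincides with the source's.
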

By Proposition \ref{prop1} 
one can derive specializations of formula (\ref{eqmain}) by applying various homomorphisms of pre-lambda rings. For example, 
Cheah's formula \cite{C} is obtained from (\ref{eqmain}) by 
applying the pre-lambda ring homomorphism defined 
as 
the Hodge-Deligne polynomials, that is,  
$$e(-;u,v):K_0(var/\C) \to \Z[u,v]$$ 
$$e([X];u,v):=\sum_{p,q} \left( \sum_i (-1)^i h^{p,q}(H^i_c(X;\C)) \right) \cdot u^p v^q.$$
Note that a special case of this is the $\chi_{y}$-genus
$$\chi_{y}=e(-;-y,1): K_0(var/\C) \to \Z[y]$$ defined only in terms of the Hodge filtration by 
$$\chi_{y}([X]):=\sum_{i,p} (-1)^i {\rm dim}_{\C} Gr^p_F H^i_c(X;\C) \cdot (-y)^p.$$
Here  
the pre-lambda structure on the polynomial ring $\Z[u_1,\cdots,u_r]$ in $r$ variables ($r \geq 1$) is defined by: 
\begin{equation}
\lambda_t \left(\sum_{\vec{k}\in \Z^r_{\geq 0}} a_{\vec{k}}\cdot \vec{u}^{\;\vec{k}}\right)
:=\prod_{\vec{k}\in \Z^r_{\geq 0}} \left(1-\vec{u}^{\;\vec{k}}\cdot t \right)^{-a_{\vec{k}}} \:,
\end{equation}
with $\vec{k}:=(k_1,\cdots,k_r) \in \Z^r_{\geq 0}$, $a_{\vec{k}} \in \Z$, and $\vec{u}^{\;\vec{k}}:=u_1^{k_1}\cdots u_r^{k_r}$, so that the corresponding Adams operation 
$\Psi_r$ for $r\in \N$ is given by $$\Psi_r\left( a_{\vec{k}}\cdot \vec{u}^{\;\vec{k}}\right):= a_{\vec{k}}\cdot \vec{u}^{\;r\cdot \vec{k}}\:.$$

\bigskip

By using the Euler product decomposition for the geometric power structure on the pre-lambda ring $K_0({var}/{\C})$, i.e.,
\begin{equation}
1+\sum_{n \geq 1} \left[{\rm Hilb}^n_{\C^d,0}\right] \cdot t^n=\prod_{k=1}^{\infty} (1-t^k)^{-\alpha_k}, 
\end{equation}
we can 
rewrite formula (\ref{eqmain}) in term of the exponents $\alpha_k \in K_0({var}/{\C})$ as
\begin{equation}\label{eqmain2}
1+\sum_{n \geq 1} [X^{[n]}] \cdot t^n=\left( \prod_{k=1}^{\infty} (1-t^k)^{-\alpha_k}\right)^{[X]}.\end{equation} 

For a smooth surface $X$ 
it is known that $\alpha_k=\LB^{k-1}$, where, as above, $\LB:=[\C]$. So, (\ref{eqmain}) becomes in this case (cf. \cite{GLM0})
the following:
\begin{equation}\label{eqmain3}
1+\sum_{n \geq 1} [X^{[n]}] \cdot t^n=\prod_{k \geq 1} (1-t^k)^{-\LB^{k-1}[X]}.
\end{equation}


\section{(Motivic) Pontrjagin (semi-)rings}

\subsection{Relative motivic Grothendieck (semi-)group} Let $K_0(var/X)$ be the relative motivic Grothendieck group of algebraic varieties over $X$, as introduced by Looijenga \cite{Lo} in relation to motivic integration. $K_0(var/X)$ is the quotient of the free abelian group of isomorphism classes of algebraic morphisms $Y \to X$ by the ``scissor" relation:
$$[Y \to X]=[Z \to Y \to X] + [Y \setminus Z \to Y \to X]$$
for $Z \subset Y$ a closed algebraic subvariety of $Y$. If we let  $Z = Y_{red}$ we deduce that these classes $[Y \to X]$ depend only on the underlying reduced spaces. By resolution of singularities, $K_0(var/X)$ is generated by classes $[Y \to X]$ with $Y$ smooth, pure
dimensional, and proper over $X$. Of course, if $X$ is a point space, we get back the motivic Grothendieck group $K_0(var/\C)$ discussed earlier.

For any morphism $f:X' \to X$ we have a functorial push-forward 
$$f_!:K_0(var/X') \to K_0(var/X) \ , \ [Z \overset{h}{\to} X'] \mapsto [Z \overset{f \circ h}{\to} X].$$
Moreover, an external product
$$ \boxtimes : K_0(var/X) \times K_0(var/X') \to K_0(var/X\times X')$$ is defined by the formula:
$$[Z \to X] \boxtimes [Z' \to X'] =[Z \times Z' \to X \times X'].$$
Similar results apply to the corresponding relative Grothendieck semi-groups $S_0(var/X)$
as studied in \cite{GLM0,GLM}.

\subsection{Pontrjagin rings and operations}
Let $F$ be a functor to the category of abelian (semi-)groups with unit $0$ defined on complex quasi-projective varieties, covariantly functorial for all (proper) morphisms. Assume $F$ is also endowed with a commutative, associative and bilinear cross-product $\boxtimes$ commuting with (proper) push-forwards $(-)_*$, with a unit $1\in F(pt)$. Our main examples for $F(X)$ are:  the relative motivic Grothendieck (semi-)group $K_0(var/X)$ resp. $S_0(var/X)$, or suitable localizations of it like $\MC$,  the Borel-Moore homology  $H_*(X):=H^{BM}_{even}(X) \otimes R$ with $R=\Q,\Q[y]$, or the group $CF(X)$ of (algebraically) constructible functions on $X$.

\begin{defn} For a fixed complex quasi-projective variety $X$ 
we define the commutative Pontrjagin (semi-)ring $\left(PF(X), \odot \right)$ by
$$PF(X):=\sum_{n=0}^{\infty}  F(\Xs) \cdot t^n 
:=\prod_{n=0}^{\infty}  F(\Xs)
,$$
with product $\odot$ induced via
$$\odot:F(\Xs) \times F(X^{(m)}) \overset{\boxtimes}{\to} F(\Xs \times X^{(m)}) \overset{(-)_*}{\to} F(X^{(n+m)}),$$
and unit $1 \in F(X^{(0)})=F(pt)$.
\end{defn}

It is easy to see that, if $f:X \to Y$ is a (proper) morphism, then we get an induced (semi-)ring homomorphism $$f_*:=(f^{(n)}_*)_n : PF(X) \to PF(Y),$$ with $f^{(n)}:\Xs \to Y^{(n)}$ the corresponding (proper) morphism on the $n$-th symmetric products.

\begin{defn} The $k$-th power operation $P_k:PF(X) \to PF(X)$ for $k\geq 1$ is the (semi-)ring homomorphism $$P_k:=\left({p_{k*}^{(n)}}:F(\Xs) \to F(X^{(nk)}) \right)_n$$ defined 
by the push forwards $p_{k*}^{(n)}$ for the natural maps 
$p_k^{(n)} :\Xs \to X^{(nk)}$ induced
by the diagonal embeddings $X^n \to (X^n)^k \cong X^{nk}$, $n \geq 0$, with  $P_k\circ P_m=P_{km}$
and $P_1$ the identity. 
\end{defn}

\subsection{Motivic exponentiation}\label{motexp}
Given a quasi-projective variety $X$, we extend the notion of power structure $\left( A(t) \right)^{[X]} \in K_0(var/{\C})[[t]]$ from \cite{GLM0,GLM} to an operation $\left( A(t) \right)^X$ associating to a normalized power series $A(t)=1+\sum_i[A_i]t^i \in K_0(var/{\C})[[t]]$ an element 
$$\left( A(t) \right)^X \in \sum_{n\geq 0}  K_0(var/\Xs) \cdot t^n =:PK_0(var/X)$$ in the Pontrjagin ring $PK_0(var/X)$ of the symmetric products of $X$ associated to the relative motivic Grothendieck groups (and similarly for the corresponding Grothendieck semi-groups). This extension is based on the geometric formula (\ref{ps}) 
$$\left( A(t) \right)^{[X]}:=1+\sum_{n=1}^{\infty} \left\{  \sum_{\underbar{k} \ :  \sum ik_i=n} \left( (\prod_i X^{k_i}) \setminus \Delta \right) \times \prod_i A_i^{k_i} / \prod_i S_{k_i} \right\} \cdot t^n$$
for the power structure of \cite{GLM0,GLM} on the semi-ring $S_0(var/{\C})$, as it will be explained below.\\

The $n$-th symmetric product  $\Xs:=X^n/S_n$ of $X$ parametrizes effective zero-cycles of degree $n$ on $X$, i.e., formal linear combinations $\sum_{i=1}^l n_i [x_i]$ of points $x_i$ in $X$ with non-negative integer coefficients $n_i$, so that $\sum_{i=1}^l n_i=n$. 
$\Xs$ has  a natural stratification into locally closed subschemes defined in terms of the partitions of $n$. More precisely, to any partition $\nu:=(n_1, \cdots, n_l)$ of $n$ one associates a sequence $\underbar{k}:=(k_1,\cdots, k_n)$, with $k_i$ denoting the number of times $i$ appears among the $n_j$'s. The length of such a partition is defined by $l(\nu):=l=\sum_i k_i$, and we have that $n=\sum_{i=1}^n i k_i$. Then the symmetric product $\Xs$ admits a stratification with strata $X^{(n)}_{\nu}$ in one-to-one correspondence to such partitions $\nu=(n_1, \cdots, n_l)$ of $n$, defined by
$$X^{(n)}_{\nu}:=\left\{\sum_{i=1}^l n_i[x_i] \ \vert \ x_i \neq x_j, {\rm if} \ i \neq j \right\},$$
or, in terms of the sequence $\underbar{k}$ associated to the given partition $\nu$, 
$$X^{(n)}_{\nu}\cong \left( (\prod_{i=1}^n X^{k_i}) \setminus \Delta \right)/ S_{k_1} \times \cdots \times S_{k_n},$$
with $\Delta$ denoting the large diagonal in $ X^{\sum k_i}$, as before.

Let us now consider the summand $$\left( (\prod_i X^{k_i}) \setminus \Delta \right) \times \prod_i A_i^{k_i} / \prod_i S_{k_i} $$
of the coefficient of $t^n$ in the power structure (\ref{ps}), corresponding to a sequence $\underbar{k}$ of non-negative integers $\{k_i\}_{i > 0}$ so that $\sum_i i k_i=n$. If $\nu$ denotes the associated partition of $n$, let $\pi_{\nu}$ be the projection from the above summand onto  $\left((\prod_i X^{k_i}) \setminus \Delta \right) / \prod_i S_{k_i} =X^{(n)}_{\nu}$. Composing $\pi_{\nu}$ with the inclusion $i_{\nu}: X^{(n)}_{\nu} \hookrightarrow \Xs$ of the stratum into the symmetric product $\Xs$, we get a morphism 
\begin{equation}\label{extmot} \pi_{\underbar{k}}: = i_{\nu} \circ \pi_{\nu}: \left( (\prod_i X^{k_i}) \setminus \Delta \right) \times \prod_i A_i^{k_i} / \prod_i S_{k_i} \to \Xs.\end{equation}
The corresponding isomorphism class (up to decomposition) over $\Xs$ depends only on the
isomorphism classes (up to decomposition) of the $A_i$.
Finally, putting all partitions of $n$ together, the coefficient of $t^n$ in (\ref{ps}) can be now regarded as a well-defined  element in $S_0(var/\Xs)$. 

Therefore, for a fixed variety $X$, 
we can now make sense of a {\it motivic exponentiation}:
\begin{equation}\label{exp}
(-)^X:1+tS_0(var/(\C)[[t]] \to PS_0(var/X):=\sum_{n\geq 0}  S_0(var/\Xs) \cdot t^n,
\end{equation}
defined by the same formula as $(\ref{ps})$, but keeping track of the strata of  symmetric products corresponding to each partition. This is a refinement of the corresponding geometric definition of the motivic power structure on $S_0(var/\C)$, which one gets back by
$$k_!\left((-)^X\right) = (-)^{[X]},$$
with $k: X\to pt$ the constant map, using the identifications $pt^{(n)}=pt$.\\

The exponentiation defined in (\ref{exp}) has the following properties, which can be  {\em directly deduced from the proof} of \cite{GLM0,GLM} for the corresponding properties of the geometric power structure of the motivic Grothendieck semi-group:
\begin{enumerate}
\item[(i')] $\left(A(t)\right)^\emptyset=1 \in PS_0(\emptyset)$,
\item[(ii')] $\left(A(t)\right)^{pt}=A(t)$, using the identifications $pt^{(n)}=pt$,
\item[(iii')] $\left( A(t) \cdot B(t) \right)^X=\left( A(t)  \right)^X \odot \left( B(t) \right)^X$,
\item[(iv')] $\left(A(t)\right)^{X}=i_!\left(A(t)\right)^{Y} \odot j_!\left(A(t)\right)^U$, with $i: Y\hookrightarrow X$ a closed
inclusion and $j: U \hookrightarrow  X$ the inclusion of the open complement $U:=X \setminus Y$.
\item[(v')] $\pi_!\left( (A(t) )^{X' \times X}\right)=\left( \left( A(t)  \right)^{[X']}  \right)^{X}$, for $\pi:X' \times X \to X$ the projection,
\item[(vi')] $(1+t)^X=1+[id_X]t+ {\rm higher \ order \ terms}$,
\item[(vii')] $\left( A(t^k)  \right)^X=P_k(\left( A(t)  \right)^X)$, with $P_k$ the $k$-th power operation.
\end{enumerate}
In the following we only use the properties (iii'), (v') and (vii') above.
\bigskip

Finally, to extend the above definition and properties to a motivic exponentiation on the Grothendieck group level:
\begin{equation}\label{expm}
(-)^X:1+tK_0(var/(\C)[[t]] \to PK_0(var/X):=\sum_{n\geq 0}  K_0(var/\Xs) \cdot t^n,
\end{equation}
we just use as in \cite{GLM0,GLM} the fact that any normalized power series $A(t) \in 1+tK_0(var/(\C)[[t]]$ can be factored as a quotient $A=B \cdot C^{-1}$, with $B, C$ in the image of the  canonical semi-ring map
$${\rm can}: (1+tS_0(var/(\C)[[t]], \cdot) \to (1+tK_0(var/(\C)[[t]], \cdot)$$ so that $$(A(t))^X:=(B(t))^X \cdot \left
((C(t))^X\right)^{-1}$$ gives a  well-defined exponentiaton on the Grothendieck group level. Here we use that $(C(t))^X$ is by definition a normalized power series in the Pontrjagin ring $PK_0(var/X)$, so it can be inverted. More precisely, this method shows that there exists at most one such exponentiation on the Grothendieck group level. The existence of this exponentiation can be translated into the existence of a group homomorphism $$(-)^X:(1+tK_0(var/(\C)[[t]], \cdot) \to (PK_0(var/X), \odot)$$ making the 
following diagram commutative:
\begin{equation}\label{diag}
\begin{CD}
(tS_0(var/(\C)[[t]],+) @>{\rm can} >> (tK_0(var/(\C)[[t]],+) \\
@V{\rm Exp} VV      @V\wr V{\rm Exp} V \\
(1+tS_0(var/(\C)[[t]],\cdot) @>{\rm can} >> (1+tK_0(var/(\C)[[t]],\cdot) \\
@V{(-)^X} VV      @VV{(-)^X} V \\
(PS_0(X), \odot) @>{\rm can} >> (PK_0(X), \odot).
\end{CD}
\end{equation}
The semi-group homomorphism  ${\rm Exp}$ on the left hand side of the diagram is defined as in (\ref{Euler-prod}), but using the geometric definition of the motivic power structure (instead of the pre-lambda ring structure). The left motivic exponentiation $(-)^X$ is already defined as a semi-group homomorphism.
The commutativity of the upper square of the diagram follows from the geometric interpretation of the pre-lambda ring structure in terms of Kapranov zeta function. So the claim follows now from the fact that $ (tK_0(var/(\C)[[t]],+) $ is the Grothendieck group completion of the semi-group $(tS_0(var/(\C)[[t]],+) $. Properties $(i')-(vii')$ for this extended exponentiation follow directly from the corresponding properties on the semi-group level.\\

We conclude this section with the following examples:
\begin{example}\label{ex-s}\rm  The following identity refines the one from Example \ref{ex1}:
\begin{equation}
 (1-t)^{-X}:=((1-t)^{-1})^X=1+\sum_{n=1}^{\infty}[id_{\Xs}] \cdot t^n
\end{equation}
Indeed, in this case $A_i=[pt]$, for all $i$, and each projection $\pi_{\nu}$ in the above construction can be identified 
with
$id_{X^{(n)}_{\nu}}$. It follows that $[\pi_{\underbar{k}}]=[i_{\nu}]$, and the result follows by summing up over all 
the 
partitions $\nu$ of $n$.
\end{example} 

\begin{example}\label{ex-conf}\rm  The following identity refines the one  from Example \ref{ex2}:
\begin{equation}
 (1+t)^{X}=1+\sum_{n=1}^{\infty}[X^{\{n\}} \overset{i_n}{\to}{\Xs}] \cdot t^n=1+\sum_{n=1}^{\infty}{(i_n)}_![id_{X^{\{n\}}}]  \cdot t^n,
\end{equation}
with $i_n:X^{\{n\}} \hookrightarrow \Xs$ the inclusion of the configuration space $X^{\{n\}}$ of $n$ unlabeled points on $X$ into the symmetric product $\Xs$. Note that $i_n$ corresponds to the partition $\nu={(1,\cdots,1)}$ of $n$.
\end{example} 

\begin{example}\label{ex-sym}\rm For $\pi:X' \times X \to X$ the projection, property $(v')$ and
Example \ref{ex-s} yield:
\begin{equation}
1+\sum_{n=1}^{\infty}[(X'\times X)^{(n)} \to{\Xs}] \cdot t^n =
\pi_!\left( (1-t)^{-{X' \times X}}\right)=\left( \left( 1-t \right)^{-[X']}  \right)^{X}
\end{equation}
\end{example}

The following refinement of Theorem \ref{Hilbert-GLM} is just a reformulation of
\cite[\S 2.1]{BBS} in terms of our new motivic exponentiation. It will be the starting point for the proof of Theorem \ref{thm1}.

\begin{theorem}[Behrend, Bryan,  Szendr\"oi] \label{Hilbert-BBS} Let $X$ be a smooth and pure $d$-dimensional complex quasi-projective variety. Then 
\begin{equation}\label{main}
1+\sum_{n \geq 1} [X^{[n]}\overset{\pi_n}{\to} \Xs] \cdot t^n=\left(1+\sum_{n \geq 1} \left[{\rm Hilb}^n_{\C^d,0}\right] \cdot t^n\right)^{X}.
\end{equation}
\end{theorem}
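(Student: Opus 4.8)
The plan is to revisit the geometric proof of the absolute identity (Theorem~\ref{Hilbert-GLM}) given in \cite{GLM0,GLM} and observe, following \cite[\S 2.1]{BBS}, that it already yields the refined \emph{relative} statement once one keeps track of the Hilbert-Chow morphism. The starting point is the stratification of $\Xh$ by \emph{support type}. A zero-dimensional subscheme $Z \subset X$ of length $n$ has finite support $\{x_1,\dots,x_l\}$ and decomposes canonically as a disjoint union $Z=\coprod_{j=1}^l Z_j$, where $Z_j$ is concentrated at $x_j$ and has some length $m_j$ with $\sum_j m_j=n$. Recording, for each $i\geq 1$, the number $k_i:=\#\{j: m_j=i\}$ of support points carrying a length-$i$ component yields a sequence $\underbar{k}$ with $\sum_i i k_i=n$, equivalently a partition $\nu$ of $n$; this decomposes $\Xh$ into locally closed strata indexed by $\nu$.

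The key geometric input is \emph{smoothness} of $X$. Since $X$ is smooth of pure dimension $d$, local coordinates identify a neighborhood of each support point $x_j$ with a neighborhood of $0\in\C^d$, and thereby identify the local component $Z_j$ with a point of the punctual Hilbert scheme ${\rm Hilb}^{m_j}_{\C^d,0}$. First I would show that this identification works in families, so that over the open configuration where the support points are distinct and carry prescribed local types, $\Xh$ restricts to a (Zariski-locally trivial) fibration with fiber $\prod_j {\rm Hilb}^{m_j}_{\C^d,0}$. Consequently the stratum of $\Xh$ indexed by $\underbar{k}$ is isomorphic, \emph{as a variety}, to
$$\left( (\prod_i X^{k_i}) \setminus \Delta \right) \times \prod_i \left({\rm Hilb}^i_{\C^d,0}\right)^{k_i} \Big/ \prod_i S_{k_i}.$$

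Next I would match this with the Hilbert-Chow morphism. Restricted to the stratum above, $\pi_n$ sends $Z=\coprod_j Z_j$ to its support cycle $\sum_j m_j[x_j]$, which is precisely the point of the symmetric-product stratum $X^{(n)}_{\nu}$ determined by the support configuration. Comparing with the construction in~(\ref{extmot}), the stratum \emph{as a variety over} $\Xs$ is therefore exactly the summand $\pi_{\underbar{k}}$ entering the definition of the motivic exponentiation $(A(t))^X$ of~(\ref{exp}), specialized to $A_i=[{\rm Hilb}^i_{\C^d,0}]$. Finally, since the strata are locally closed and partition $\Xh$, the scissor relation in $K_0(var/\Xs)$ shows that the sum of their classes equals $[\Xh\overset{\pi_n}{\to}\Xs]$; by construction this sum is the coefficient of $t^n$ in $\left(1+\sum_{n\geq 1}[{\rm Hilb}^n_{\C^d,0}]\cdot t^n\right)^{X}$, and summing over $n$ yields~(\ref{main}).

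The main obstacle is the local product structure of the second step, made compatible with both $\pi_n$ and the symmetric-group quotients. One must verify that the canonical splitting $Z=\coprod_j Z_j$ is algebraic in families rather than merely pointwise, so that the stratum is genuinely isomorphic to the claimed quotient \emph{over} $\Xs$. Here it is essential that the relative Grothendieck class depends on such a fibration only up to piecewise (Zariski-locally) triviality with the stated fiber, i.e.\ up to ``isomorphism over $\Xs$ up to decomposition'' as already noted after~(\ref{extmot}), so that \'etale-local identifications with $\C^d$ suffice to pin down the class in $K_0(var/\Xs)$. This is precisely the content of the GLM argument, now read relatively; the only genuinely new point, observed in \cite{BBS}, is that the target zero-cycle recorded by $\pi_n$ is exactly the one built into the strata of $\Xs$ used to define $(-)^X$.
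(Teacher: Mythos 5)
Your proposal is correct and takes essentially the same route as the paper, which gives no independent argument but notes that the theorem is a reformulation of \cite[\S 2.1]{BBS}, i.e., precisely your observation that the GLM stratification of $X^{[n]}$ by support type, read relatively over $\Xs$ via the Hilbert--Chow morphism, reproduces the strata $\pi_{\underbar{k}}$ of (\ref{extmot}) defining the exponentiation $(-)^X$ with $A_i={\rm Hilb}^i_{\C^d,0}$. Your closing caveat is also the right one: only piecewise (Zariski-locally trivial) identifications over $\Xs$, i.e., isomorphism ``up to decomposition'' as noted after (\ref{extmot}), are needed to pin down the class in $K_0(var/\Xs)$, and this is exactly the point the paper delegates to the proofs in \cite{GLM0,GLM,BBS}.
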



\section{Proof of the main result}


\subsection{Motivic Hirzebruch classes}\label{motHir}
The un-normalized Hirzebruch class transformation 
$$T_{y*}:K_0(var/X) \to H_*(X):=H^{BM}_{even}(X) \otimes \Q[y]$$
was introduced in \cite{BSY} as a class version of the virtual Hodge polynomial $\chi_y$. Its normalization, denoted here by $\widehat{T}_{y*}$, provides a 
functorial unification of of the Chern class transformation of MacPherson \cite{MP}, Todd class transformation of Baum-Fulton-MacPherson \cite{BFM} and L-class transformation of Cappell-Shaneson \cite{CS1}, respectively. This normalization $\widehat{T}_{y*}$ is obtained by pre-composition the transformation $T_{y*}$ with the normalization functor
$$\Psi_{(1+y)}:H^{BM}_{even}(X) \otimes \Q[y] \to H^{BM}_{even}(X) \otimes \Q[y,(1+y)^{-1}]$$ given in degree $2k$ by multiplication by $(1+y)^{-k}$. And it follows from \cite[Theorem 3.1] {BSY} that $\widehat{T}_{y*}:=\Psi_{(1+y)} \circ T_{y*}$ takes in fact values in $H^{BM}_{even}(X) \otimes \Q[y]$, so one is allowed to specialize the parameter $y$ to the value $y=-1$.

The transformations $T_{y*}$ and  $\widehat{T}_{y*}$ are functorial for proper push-forwards, and they commute with exterior products. If $X$ is a point, these transformations reduce to the pre-lambda ring homomorphism $$\chi_y:K_0(var/\C) \to \Z[y].$$
Recall that a pre-lambda ring homomorphism commutes with the corresponding Euler products.

The un-normalized {\it motivic Hirzebruch class} of a complex algebraic variety $X$ is defined by:
$$T_{y*}(X):=T_{y*}([id_X]).$$ Similarly, we define the normalized motivic Hirzebruch class of $X$ by using instead the transformation $\widehat{T}_{y*}$. If $X$ is smooth, then $T_{y*}(X)$ is Poincar\'e dual to the Hirzebruch cohomology class $T^*_{y}(TX)$ appearing in the generalized Hirzebruch-Riemann-Roch theorem \cite{H}, and which in Hirzebruch's philosophy corresponds to the non-characteristic power series:
\begin{equation} Q_y(\alpha):=\frac{\alpha (1+ye^{-\alpha})}{1-e^{-\alpha}} \in \Q[y][[\alpha]],
\end{equation}
with $Q_y(0)=1+y$. 
More precisely, 
\begin{equation}
T^*_{y}(TX):=\prod_{i=1}^{{\rm dim} X} Q_y(\alpha_i),
\end{equation}
with $\{\alpha_i\}$ the formal Chern roots of the  holomorphic tangent bundle $TX$ of $X$.
The associated normalized (or characteristic) power series is \begin{equation}\widehat{Q}_y(\alpha):=\frac{Q_y(\alpha(1+y))}{1+y}=\frac{\alpha(1+y)}{1-e^{-\alpha(1+y)}}-\alpha y \ ,\end{equation} which defines the normalized cohomology Hirzebruch class $\widehat{T}^*_y(-)$. 
By specializing the parameter $y$ of $\widehat{T}^*_y(-)$ to the three distinguished values $y=-1, 0$ and $1$, we recover the cohomology Chern, Todd, and $L$-class, respectively.
Also, if $X$ is smooth, the classes $\widehat{T}_{y*}(X)$ and $\widehat{T}^*_y(TX)$ are Poincar\'e dual to each other (cf. \cite{BSY}). Similarly, the un-normalized cohomology Hirzebruch class ${T}^*_y(-)$ specializes for the three distinguished values $y=-1, 0$ and $1$ to the top Chern class, Todd class and Atiyah-Singer $L$-class $\widetilde{L}^*$, respectively, with $\Psi_2 \widetilde{L}^* =L^*$ the Hirzebruch $L$-class (where $\Psi_2$ denotes the corresponding cohomological Adams operation defined in degree $2k$ by multiplication with $2^k$).\\

It follows from \cite{BSY} that, even if $X$ is singular, by specializing to $y=-1$ one gets that
\begin{equation}\label{idc}
\widehat{T}_{-1*}(X)=c_*(X) \otimes \Q 
\end{equation} 
is the rationalized homology Chern class of MacPherson \cite{MP}. For the un-normalized Hirzebruch class, we only get the degree-zero part of the MacPherson Chern class as its specialization at $y=-1$:
\begin{equation}
{T}_{-1*}(X)=c_0(X) \otimes \Q 
\end{equation} 
This motivates our discussion in Section \ref{MPChern}.

\begin{remark}\label{rem-main}\rm
We will denote by the same symbol, $T_{y*}(-)$, the induced functorial transformation $PK_0(var/X) \to PH_*(X)$. 
Since the un-normalized motivic Hirzebruch class transformation $$T_{y*}:K_0(var/X) \to H_*(X):=H^{BM}_{even}(X) \otimes \Q[y]$$ commutes with exterior products and proper push-forward, it follows that it also commutes with the Pontrjagin product $\odot$, hence
$$T_{y*}(-): PK_0(var/X) \to PH_*(X)$$ becomes a ring homomorphism.
Since 
the diagonal embeddings $p_k:X^n \to X^{nk}$ (hence also the induced maps $p_k^{(n)}$) are proper, $T_{y*}(-)$ also commutes with the power operations $P_k$ defined on $PK_0(var/X)$ and $PH_*(X)$, respectively.
\end{remark}


\subsection{Hirzebruch classes of symmetric products and configuration spaces}
A generating series for the un-normalized motivic Hirzebruch classes of symmetric products of a quasi-projective variety $X$ was given in \cite{CMSSY}. 
A motivic reformulation of this formula, obtained by using Example \ref{ex-sym}, is the following:
\begin{theorem}\label{mot} Let $X$ and $X'$ be quasi-projective complex algebraic varieties.
Then:
\begin{equation}\label{eqmot}
{T_{(-y)}}_*\left(  \left( (1-t)^{-[X']} \right)^X \right)=(1-t \cdot d_*)^{-\chi_{-y}(X') \cdot {T_{(-y)}}_*(X)},
\end{equation}
with $$(1-t \cdot d_*)^{-(\cdot)}:=\exp \left(  \sum_{r=1}^{\infty} \Psi_r d^r_*(\cdot) \frac{t^r}{r}  \right) :H_*(X) \to PH_*(X).$$
Here $d^r:X \to X^{(r)}$ is the composition of the diagonal embedding $X \to X^r$ with the natural projection $X^r \to X^{(r)}$, and $\Psi_r$ denotes the $r$-th homological Adams operation which is defined by multiplication by $\frac{1}{r^k}$ on $H_{2k}^{BM}(-;\Q)$ and by sending $y$ to $y^r$.
\end{theorem}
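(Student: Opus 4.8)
The plan is to deduce the formula from the symmetric-product generating series of \cite{CMSSY} (the case $X'=pt$) by applying it to the product $X'\times X$ and then pushing forward to $X$. First I would rewrite the left-hand side using Example \ref{ex-sym}, which gives
$$\left((1-t)^{-[X']}\right)^X = 1 + \sum_{n\ge 1}[(X'\times X)^{(n)}\overset{\pi^{(n)}}{\to}X^{(n)}]\cdot t^n \in PK_0(var/X),$$
where $\pi^{(n)}$ is induced by the projection $\pi:X'\times X\to X$. Applying the Pontrjagin ring homomorphism $T_{(-y)*}:PK_0(var/X)\to PH_*(X)$ of Remark \ref{rem-main}, the left-hand side becomes $\sum_n T_{(-y)*}([(X'\times X)^{(n)}\to X^{(n)}])\,t^n$, which (for $\pi$ proper) is the image under the induced $\odot$-ring homomorphism $\pi_*:PH_*(X'\times X)\to PH_*(X)$ of the generating series $\sum_n T_{(-y)*}((X'\times X)^{(n)})\,t^n$.

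Next I would invoke the formula of \cite{CMSSY} for $Z=X'\times X$, namely
$$\sum_{n\ge 0}T_{(-y)*}(Z^{(n)})\cdot t^n = \exp\Big(\sum_{r\ge 1}\tfrac{t^r}{r}\,d^r_*\Psi_r\,T_{(-y)*}(Z)\Big)\in PH_*(Z),$$
and use that $T_{(-y)*}$ commutes with external products, so $T_{(-y)*}(X'\times X)=T_{(-y)*}(X')\boxtimes T_{(-y)*}(X)$. It then remains to push forward by $\pi$. Since $\pi^{(r)}\circ d^r_Z = d^r_X\circ\pi$ (both send $(x',x)$ to $r[x]$), and since proper push-forward preserves Borel--Moore degree and is $\Q[y]$-linear, it commutes with the homological Adams operations $\Psi_r$; hence $\pi^{(r)}_*d^r_{Z*}\Psi_r = d^r_{X*}\Psi_r\pi_*$. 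As $\pi_*$ is a $\odot$-ring homomorphism it commutes with $\exp$, so the whole push-forward reduces to the single computation $\pi_*\big(T_{(-y)*}(X')\boxtimes T_{(-y)*}(X)\big) = \chi_{-y}(X')\cdot T_{(-y)*}(X)$, the scalar $\chi_{-y}(X')$ appearing because $T_{(-y)*}$ over a point is the genus $\chi_{-y}$. Substituting yields $\exp(\sum_r\frac{t^r}{r}d^r_{X*}\Psi_r(\chi_{-y}(X')\,T_{(-y)*}(X))) = (1-t\cdot d_*)^{-\chi_{-y}(X')\cdot T_{(-y)*}(X)}$, which is exactly the asserted right-hand side.

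The main obstacle is properness: the maps $\pi^{(n)}$ are proper only when $X'$ is proper, and both the push-forward on Pontrjagin rings and its commutation with $T_{(-y)*}$ require this. I would resolve it by reducing to the smooth projective case. Both sides of the identity are group homomorphisms in the variable $[X']\in K_0(var/\C)$: on the left, $[X']\mapsto(1-t)^{-[X']}$ is the Kapranov zeta (a group homomorphism into $(1+tK_0(var/\C)[[t]],\cdot)$), property (iii') of the motivic exponentiation turns $\cdot$ into $\odot$, and $T_{(-y)*}$ is a ring homomorphism; on the right, $[X']\mapsto\chi_{-y}(X')\cdot T_{(-y)*}(X)$ is additive and $(1-t\cdot d_*)^{-(\cdot)}$ is a group homomorphism $(H_*(X),+)\to(PH_*(X),\odot)$. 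Since $K_0(var/\C)$ is generated by classes of smooth projective varieties, it suffices to verify the identity for such $X'$, for which $\pi$ is proper and the computation above applies verbatim.
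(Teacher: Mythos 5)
Your argument is correct, and its skeleton --- rewrite the left-hand side via Example \ref{ex-sym} as the generating series of the relative classes $[(X'\times X)^{(n)}\to X^{(n)}]$ and then feed in the symmetric-product formula of \cite{CMSSY} --- is exactly the paper's. Where you diverge is in which form of the \cite{CMSSY} result you use, and hence in how non-proper $X'$ is handled. The paper reads the theorem directly off the \emph{relative} (coefficient) version of that formula: applied to the class $[X'\times X \overset{\pi}{\to} X]\in K_0(var/X)$ it gives $\sum_n T_{(-y)*}([(X'\times X)^{(n)}\to X^{(n)}])\,t^n=\exp\bigl(\sum_r \Psi_r d^r_* T_{(-y)*}([X'\times X\to X])\frac{t^r}{r}\bigr)$, and the exponent is evaluated by multiplicativity of $T_{(-y)*}$ under exterior products, $T_{(-y)*}([X'\times X\to pt\times X])=\chi_{-y}(X')\cdot T_{(-y)*}(X)$; no properness of $\pi$ enters, because on the motivic side $\pi_!$ exists for arbitrary morphisms. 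You instead use only the \emph{absolute} formula for the space $Z=X'\times X$ and push forward along $\pi$ in Borel--Moore homology, which genuinely requires $\pi$ proper --- and you correctly identify and repair this: both sides are group homomorphisms in $[X']\in K_0(var/\C)$ (Kapranov zeta plus the group-level construction of $(-)^X$ in diagram (\ref{diag}) and Remark \ref{rem-main} on the left; additivity of $\chi_{-y}$, $\Psi_r$, $d^r_*$ on the right), so it suffices to check on smooth projective generators of $K_0(var/\C)$. Your proper-case computation also checks out in every step: $\pi^{(r)}\circ d^r_{X'\times X}=d^r_X\circ \pi$, commutation of proper push-forward with $\Psi_r$ and with $\odot$ (hence with $\exp$), and $\pi_*\bigl(T_{(-y)*}(X')\boxtimes T_{(-y)*}(X)\bigr)=\chi_{-y}(X')\cdot T_{(-y)*}(X)$. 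The trade-off: the paper's route is immediate but rests on the stronger relative/mixed-Hodge-module input from \cite{CMSSY}, whereas yours gets by with the absolute statement (essentially Corollary \ref{symHir} for $X'\times X$) at the cost of an extra reduction step, which in turn invokes the standard characteristic-zero fact that $K_0(var/\C)$ is generated by smooth projective classes, i.e., resolution of singularities.
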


\begin{cor} Let $X$ be a quasi-projective variety and $\alpha \in K_0(var/\C)$ be a fixed virtual class.
Then, in the above notations, we obtain:
 \begin{equation}\label{eqmot2}
{T_{(-y)}}_*\left(  \left( (1-t)^{-\alpha} \right)^X \right)=(1-t \cdot d_*)^{-\chi_{-y}(\alpha) \cdot {T_{(-y)}}_*(X)},
\end{equation}
\end{cor}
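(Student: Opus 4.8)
The plan is to show that both sides of (\ref{eqmot2}), regarded as functions of $\alpha$, are group homomorphisms from $(K_0(var/\C),+)$ to $(PH_*(X),\odot)$, and then to invoke Theorem \ref{mot} to conclude that they agree on the classes $[X']$ of honest varieties, which generate $K_0(var/\C)$ as an abelian group. The point is that Theorem \ref{mot} already does all the geometric work; the corollary is a purely formal extension from effective classes to virtual ones.

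First I would verify that the left-hand side is a homomorphism in $\alpha$, by factoring it as a composite of three maps. The assignment $\alpha \mapsto (1-t)^{-\alpha}$ is, by the definition of the Kapranov zeta pre-lambda structure (Example \ref{ex3} and Remark \ref{rem1}), a group homomorphism $(K_0(var/\C),+) \to (1+tK_0(var/\C)[[t]],\cdot)$. Next, the motivic exponentiation $(-)^X$ at the Grothendieck-group level is a group homomorphism $(1+tK_0(var/\C)[[t]],\cdot) \to (PK_0(var/X),\odot)$ by property $(iii')$, its existence as such being exactly what diagram (\ref{diag}) encodes. Finally, $T_{y*}(-):PK_0(var/X) \to PH_*(X)$ is a ring homomorphism for the Pontrjagin product, hence in particular a homomorphism for $\odot$, by Remark \ref{rem-main}. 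Composing the three exhibits $\alpha \mapsto {T_{(-y)}}_*\left(\left((1-t)^{-\alpha}\right)^X\right)$ as the desired group homomorphism.

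I would then check the right-hand side in the same spirit. The Hodge genus $\chi_{-y}:K_0(var/\C)\to\Z[y]$ is additive, and $\mu \mapsto \mu\cdot {T_{(-y)}}_*(X)$ is $\Z[y]$-linear, so $\alpha \mapsto \chi_{-y}(\alpha)\cdot {T_{(-y)}}_*(X)$ is a group homomorphism $(K_0(var/\C),+)\to (H_*(X),+)$. By the defining formula (\ref{exp-homology}), the operation $(1-t\cdot d_*)^{-(\cdot)}=\exp\left(\sum_{r=1}^{\infty}\Psi_r d^r_*(\cdot)\,t^r/r\right)$ is itself a group homomorphism $(H_*(X),+)\to(PH_*(X),\odot)$. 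The right-hand side of (\ref{eqmot2}) is the composite of these two maps, hence again a group homomorphism in $\alpha$.

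With both sides identified as group homomorphisms $(K_0(var/\C),+)\to(PH_*(X),\odot)$, it remains only to compare them on a generating set. Since $K_0(var/\C)$ is the Grothendieck-group completion of the semi-ring $S_0(var/\C)$, it is generated as an abelian group by the effective classes $[X']$; on these, Theorem \ref{mot} asserts precisely the equality ${T_{(-y)}}_*\left(\left((1-t)^{-[X']}\right)^X\right)=(1-t\cdot d_*)^{-\chi_{-y}(X')\cdot{T_{(-y)}}_*(X)}$. Two group homomorphisms that agree on a generating set coincide, so (\ref{eqmot2}) follows for every $\alpha\in K_0(var/\C)$. There is no genuine obstacle in this argument; the only step demanding care is the bookkeeping that verifies each of the three constituent maps on the left is really a homomorphism for its relevant structure, since it is the compatibility of the pre-lambda product, the series product, and the Pontrjagin product that makes the left-hand side additive in $\alpha$ in the first place.
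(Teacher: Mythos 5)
Your proof is correct and is essentially the paper's argument: the paper likewise writes $\alpha=[X']-[X'']$ with $X'$, $X''$ quasi-projective varieties and appeals to the facts that ${T_{(-y)}}_*:PK_0(var/X)\to PH_*(X)$ and $\chi_{-y}:K_0(var/\C)\to\Z[y]$ are (ring) homomorphisms, which is exactly your ``both sides are homomorphisms in $\alpha$, so equality on effective classes (Theorem \ref{mot}) extends to virtual ones.'' Your version merely spells out the constituent homomorphisms (pre-lambda structure, motivic exponentiation via $(iii')$ and diagram (\ref{diag}), and the exponential (\ref{exp-homology})) that the paper's one-line proof leaves implicit.
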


\begin{proof} We use that $\alpha=[X']-[X'']$, with $X'$ and $X''$ quasi-projective varieties, together with the fact that ${T_{(-y)}}_*:PK_0(var/X) \to PH_*(X)$ and $\chi_{-y}:K_0(var/\C) \to \Z[y]$ are ring homomorphisms.
\end{proof}


In particular, 
for $\alpha=[pt]$ 
we get the following result (see also \cite[Corollary 1.2]{CMSSY})
by using Example \ref{ex-s}  :
\begin{cor}\label{symHir} If $X$ is a quasi-projective complex algebraic variety, then:
\begin{eqnarray*}
\sum_{n\geq0} {T_{(-y)}}_*(\Xs) \cdot t^n&=& {T_{(-y)}}_*((1-t)^{-X})\\ &=&(1-t \cdot d_*)^{- {T_{(-y)}}_*(X)} \\ &=& \exp \left(  \sum_{r=1}^{\infty} \Psi_r d^r_*\left( {T_{(-y)}}_*(X)\right) \frac{t^r}{r}  \right).
\end{eqnarray*}
\end{cor}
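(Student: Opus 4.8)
The plan is to read off Corollary \ref{symHir} as the special case $\alpha = [pt]$ of the immediately preceding corollary, i.e. of formula (\ref{eqmot2}), and then to identify the two sides of that formula with the outer terms of the asserted chain of equalities. The object sitting in the middle is $(1-t)^{-X} = \left((1-t)^{-[pt]}\right)^X \in PK_0(var/X)$; here $[pt]$ is the unit of $K_0(var/\C)$, so $(1-t)^{-[pt]} = (1-t)^{-1} = 1 + t + t^2 + \cdots$, and the motivic exponentiation $(-)^X$ of (\ref{expm}) is exactly the one computed in Example \ref{ex-s}.

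For the first equality I would invoke Example \ref{ex-s}, which evaluates this exponentiation explicitly as $(1-t)^{-X} = 1 + \sum_{n\geq 1}[id_{\Xs}]\cdot t^n$ in the Pontrjagin ring $PK_0(var/X)$. By Remark \ref{rem-main} the induced transformation ${T_{(-y)}}_*:PK_0(var/X)\to PH_*(X)$ is a ring homomorphism — in particular additive and compatible with the $t$-adic grading — so it may be applied coefficientwise. Using the definition ${T_{(-y)}}_*(\Xs) = {T_{(-y)}}_*([id_{\Xs}])$ of the un-normalized Hirzebruch class, and recording that the $n=0$ term is the unit $1 \in H_*(pt)$, I then obtain $\sum_{n\geq 0}{T_{(-y)}}_*(\Xs)\cdot t^n = {T_{(-y)}}_*\left((1-t)^{-X}\right)$, which is precisely the first equality.

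The second equality is immediate from (\ref{eqmot2}): setting $\alpha = [pt]$ and using that $\chi_{-y}([pt]) = 1$ for a point, the right-hand side of (\ref{eqmot2}) collapses to $(1-t\cdot d_*)^{-{T_{(-y)}}_*(X)}$. The third equality is then purely a matter of unwinding the definition of the homological operation $(1-t\cdot d_*)^{-(\cdot)} = \exp\left(\sum_{r=1}^{\infty}\Psi_r d^r_*(\cdot)\,\frac{t^r}{r}\right)$ recorded in the statement of Theorem \ref{mot}, evaluated on the class ${T_{(-y)}}_*(X) \in H_*(X)$.

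Once Theorem \ref{mot} and its corollary are granted, there is no genuine obstacle remaining: the corollary is a pure specialization rather than a fresh argument. The single point I would take care to justify is the coefficientwise application of ${T_{(-y)}}_*$ in the first equality — that the Hirzebruch transformation really commutes with the Pontrjagin product $\odot$, so that the identity of Example \ref{ex-s} survives under it. This is exactly what Remark \ref{rem-main} provides (compatibility with exterior products and proper push-forwards, hence with $\odot$). All of the substantive content, namely the closed form of the generating series for the symmetric products, is carried by Theorem \ref{mot} itself, which in turn rests on the formula of \cite{CMSSY}.
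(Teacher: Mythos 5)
Your proposal is correct and matches the paper's own derivation: the paper obtains Corollary \ref{symHir} precisely by specializing the preceding corollary (formula (\ref{eqmot2})) to $\alpha=[pt]$ and invoking Example \ref{ex-s} to identify $(1-t)^{-X}$ with $1+\sum_{n\geq 1}[id_{\Xs}]\cdot t^n$, with the third equality being the definition of $(1-t\cdot d_*)^{-(\cdot)}$. Your extra care in citing Remark \ref{rem-main} for the coefficientwise application of ${T_{(-y)}}_*$ is exactly the compatibility the paper relies on implicitly.
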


Moreover, Example \ref{ex-conf} can be used to derive the following:

\begin{prop}\label{conf} For a quasi-projective complex algebraic variety $X$, 
let $i_n:X^{\{n\}} \hookrightarrow \Xs$ denote as above the inclusion of the configuration space $X^{\{n\}}$ of $n$ unlabeled points on $X$ into the symmetric product $\Xs$. Then the following generating series formula holds:
\begin{equation}
\sum_{n\geq0} {T_{(-y)}}_*([i_n]) \cdot t^n=(1-t^2 \cdot d^2_*)^{{T_{(-y)}}_*(X)} \odot (1-t \cdot d_*)^{- {T_{(-y)}}_*(X)} \ .
\end{equation}
\end{prop}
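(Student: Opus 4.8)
The plan is to recognize the left-hand side as the image, under the Hirzebruch class transformation, of the motivic exponentiation $(1+t)^X$, and then to exploit the elementary factorization of $1+t$ together with the multiplicativity of the exponentiation and the compatibility of ${T_{(-y)}}_*$ with all the relevant structure. First I would invoke Example \ref{ex-conf}, which identifies
\[
\sum_{n\geq 0}[X^{\{n\}}\overset{i_n}{\to}\Xs]\cdot t^n=(1+t)^X\in PK_0(var/X),
\]
so that the desired generating series is precisely ${T_{(-y)}}_*\big((1+t)^X\big)$. By Remark \ref{rem-main}, ${T_{(-y)}}_*$ induces a ring homomorphism $PK_0(var/X)\to PH_*(X)$ for the Pontrjagin product $\odot$ which commutes with the power operations $P_k$; being unital, it sends normalized power series (those with unit in the $t^0$-component) to normalized ones and hence respects $\odot$-inversion. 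These compatibilities are what let the computation be transported from the motivic to the homological side.

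Next I would use the identity $1+t=(1-t^2)\cdot(1-t)^{-1}$ in $1+t\,K_0(var/\C)[[t]]$. Since $(-)^X$ is multiplicative by property (iii') and is therefore a group homomorphism $(1+t\,K_0(var/\C)[[t]],\cdot)\to(PK_0(var/X),\odot)$, and since property (vii') gives $((1-t^2)^{-1})^X=P_2\big((1-t)^{-X}\big)$, this yields
\[
(1+t)^X=(1-t)^{-X}\odot P_2\big((1-t)^{-X}\big)^{-1}
\]
in $PK_0(var/X)$, where I write $(1-t)^{-X}:=((1-t)^{-1})^X$ as in Example \ref{ex-s}.

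Applying ${T_{(-y)}}_*$ and using Corollary \ref{symHir} to evaluate ${T_{(-y)}}_*\big((1-t)^{-X}\big)=(1-t\cdot d_*)^{-{T_{(-y)}}_*(X)}$, together with the commutation ${T_{(-y)}}_*\circ P_2=P_2\circ {T_{(-y)}}_*$ and the definition $(\ref{power-homology})$, namely $(1-t^2\cdot d^2_*)^{-(\cdot)}=P_2\circ(1-t\cdot d_*)^{-(\cdot)}$, I obtain
\[
{T_{(-y)}}_*\big((1+t)^X\big)=(1-t\cdot d_*)^{-{T_{(-y)}}_*(X)}\odot\Big((1-t^2\cdot d^2_*)^{-{T_{(-y)}}_*(X)}\Big)^{-1}.
\]
Finally, since $(1-t^2\cdot d^2_*)^{-(\cdot)}$ is a group homomorphism $(H_*(X),+)\to(PH_*(X),\odot)$, the $\odot$-inverse of $(1-t^2\cdot d^2_*)^{-{T_{(-y)}}_*(X)}$ is $(1-t^2\cdot d^2_*)^{{T_{(-y)}}_*(X)}$, and commutativity of $\odot$ rearranges the two factors into the stated form.

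The computation is short once the set-up is in place; the only genuinely delicate point is the structural bookkeeping. I expect the main care to be needed in justifying $((1-t^2)^{-1})^X=P_2\big((1-t)^{-X}\big)$ and its homological analogue, i.e. that the substitution $t\mapsto t^2$ on the motivic side corresponds to the $k=2$ power operation on both Pontrjagin rings, and in correctly tracking the sign when inverting $(1-t^2\cdot d^2_*)^{-{T_{(-y)}}_*(X)}$ (here one must remember that the notation $(1-t^2\cdot d^2_*)^{-(\cdot)}$ already carries a minus sign, so $\odot$-inversion flips it to a plus). Everything else reduces to the multiplicativity and functoriality statements already established, so no new geometric input beyond Example \ref{ex-conf} and Corollary \ref{symHir} is required.
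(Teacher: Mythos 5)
Your proposal is correct and follows essentially the same route as the paper: the identity $(1+t)^X=\left((1-t^2)\cdot (1-t)^{-1}\right)^X$ from Example \ref{ex-conf}, properties (iii') and (vii'), the commutation of ${T_{(-y)}}_*$ with $\odot$ and $P_2$, and the evaluation via Corollary \ref{symHir} (formula (\ref{eqmot})). The only cosmetic difference is that you express $(1-t^2)^X$ as the $\odot$-inverse of $P_2\left((1-t)^{-X}\right)$, whereas the paper writes it directly as $P_2\left((1-t)^X\right)$ and applies (\ref{eqmot}) with the opposite exponent --- these are equivalent by the group homomorphism property you cite.
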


\begin{proof} By applying the ring homomorphism 
$${T_{(-y)}}_*:PK_0(var/X) \to PH_*(X)$$
to the identity in Example \ref{ex-conf}, and using the fact that $1+t=\frac{1-t^2}{1-t}$, we obtain:
\begin{eqnarray*}\sum_{n\geq0} {T_{(-y)}}_*([i_n]) \cdot t^n &=& {T_{(-y)}}_*((1+t)^X) \\ 
&\overset{(iii')}{=} & {T_{(-y)}}_*\left((1-t^2)^X \odot  (1-t)^{-X}\right) \\
&=&{T_{(-y)}}_*\left((1-t^2)^X\right) \odot  {T_{(-y)}}_*\left((1-t)^{-X}\right)\\
&\overset{(vii')}{=} & {T_{(-y)}}_*\left(P_2\left((1-t)^X\right)\right) \odot  {T_{(-y)}}_*\left((1-t)^{-X}\right)\\
&=&P_2\left( {T_{(-y)}}_*\left((1-t)^X\right) \right) \odot  {T_{(-y)}}_*\left((1-t)^{-X}\right)
\end{eqnarray*}
The desired result follows now from formula (\ref{eqmot}).
\end{proof}

The result of Proposition \ref{conf} should be regarded as a characteristic class version of Getzler's generating series formula for the virtual Hodge polynomial (or, more generally, for the Hodge-Deligne polynomial) of configuration spaces $X^{\{n\}}$ of $n$ unlabeled points on $X$, see \cite[Corollary 5.7]{Ge}.


\subsection{Proof of Theorem \ref{thm1}}
After developing $(\ref{main})$ into the corresponding Euler product with exponents $\alpha_k \in K_0(var/\C)$, and by using the rules of exponentiation, we get:
\begin{eqnarray*}
1+\sum_{n \geq 1} [X^{[n]}\overset{\pi_n}{\to} \Xs] \cdot t^n &=&\left(1+\sum_{n \geq 1} \left[{\rm Hilb}^n_{\C^d,0}\right] \cdot t^n\right)^{X}\\ &=&\left( \prod_{k=1}^{\infty} (1-t^k)^{-\alpha_k}\right)^{X} \\ &\overset{(iii')}{=}& \bigodot_{k=1}^{\infty} \left( (1-t^k)^{-\alpha_k} \right)^X
\\ &\overset{(vii')}{=}& \bigodot_{k=1}^{\infty} P_k \left( \left( (1-t)^{-\alpha_k} \right)^X \right)
\end{eqnarray*}
where $\bigodot$ denotes 
the Pontrjagin product in the motivic Pontrjagin ring $PK_0(var/X)$.
We next apply the ring homomorphism 
$${T_{(-y)}}_*:PK_0(var/X) \to PH_*(X)$$
to the above identity and use that fact that this transformation commutes with proper push-forwards, with the Pontrjagin multiplication, and with the power operations $P_k$, to obtain
the following:
 {\allowdisplaybreaks
\begin{eqnarray*}
1+\sum_{n \geq 1} {\pi_n}_*{T_{(-y)}}_*(X^{[n]})  \cdot t^n &=& {T_{(-y)}}_*\left(  \bigodot_{k=1}^{\infty} P_k \left( \left( (1-t)^{-\alpha_k} \right)^X \right) \right) \\
&=&  \bigodot_{k=1}^{\infty} P_k  \left( {T_{(-y)}}_* \left( \left( (1-t)^{-\alpha_k} \right)^X \right)  \right)\\ 
&\overset{(\ref{eqmot2})}{=}&  \bigodot_{k=1}^{\infty} P_k  \left( (1-t\cdot d_*)^{-\chi_{-y}(\alpha_k) \cdot  {T_{(-y)}}_*(X)}  \right)\\ 
&{=:}&  \bigodot_{k=1}^{\infty}  \   (1-t^k\cdot d^k_*)^{-\chi_{-y}(\alpha_k) \cdot  {T_{(-y)}}_*(X)}  \\ 
&=:& \left(1+ \sum_{n=1}^{\infty} \chi_{-y}({\rm Hilb}^n_{\C^d,0}) \cdot t^n \cdot d^n_* \right)^{{T_{(-y)}}_*(X)} 
\end{eqnarray*}
}
\hfill$\square$

Note that the method of proof of the above result yields the following characteristic class version of Proposition \ref{prop1}:
\begin{theorem}\label{mt2} Let $X$ be a quasi-projective variety and $1+\sum_{n \geq 1} A_n t^n \in K_0(var/\C)[[t]]$ be a normalized power series. Then:
$${T_{(-y)}}_* \left( (1+\sum_{n \geq 1} A_n t^n )^X\right)=\left(1+ \sum_{n=1}^{\infty} \chi_{-y}(A_n)\cdot t^n \cdot d^n_* \right)^{{T_{(-y)}}_*(X)} .$$
\end{theorem}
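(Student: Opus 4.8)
The plan is to generalize the computation already carried out in the proof of Theorem~\ref{thm1}, replacing the specific normalized series $1+\sum_{n\geq 1}[{\rm Hilb}^n_{\C^d,0}]t^n$ by an arbitrary normalized series $A(t)=1+\sum_{n\geq 1}A_n t^n \in K_0(var/\C)[[t]]$. The whole argument is formal: it relies only on the multiplicativity properties (iii') and (vii') of the motivic exponentiation $(-)^X$, on the Euler product decomposition over the pre-lambda ring $K_0(var/\C)$, and on the fact that $\chi_{-y}$ is a pre-lambda ring homomorphism (so it commutes with Euler products), together with the transformation formula (\ref{eqmot2}) for ${T_{(-y)}}_*$ of a single factor $\big((1-t)^{-\alpha}\big)^X$.

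First I would write the unique Euler product decomposition $A(t)=\prod_{k=1}^{\infty}(1-t^k)^{-\beta_k}$ with $\beta_k \in K_0(var/\C)$, exactly as in Remark~\ref{rem1}. Applying property (iii') of the motivic exponentiation termwise and then property (vii') to each factor, one rewrites
\begin{equation*}
\big(A(t)\big)^X = \bigodot_{k=1}^{\infty} \Big(\big((1-t^k)^{-\beta_k}\big)^X\Big) = \bigodot_{k=1}^{\infty} P_k\Big(\big((1-t)^{-\beta_k}\big)^X\Big),
\end{equation*}
in the Pontrjagin ring $PK_0(var/X)$. Next I would apply the ring homomorphism ${T_{(-y)}}_*:PK_0(var/X) \to PH_*(X)$, using (as recorded in Remark~\ref{rem-main}) that it commutes with the Pontrjagin product $\odot$ and with the power operations $P_k$. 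By Corollary~(\ref{eqmot2}), each factor transforms as ${T_{(-y)}}_*\big(\big((1-t)^{-\beta_k}\big)^X\big)=(1-t\cdot d_*)^{-\chi_{-y}(\beta_k)\cdot {T_{(-y)}}_*(X)}$, so after applying $P_k$ one obtains the factor $(1-t^k\cdot d^k_*)^{-\chi_{-y}(\beta_k)\cdot {T_{(-y)}}_*(X)}$.

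The final step is to recognize the resulting product as the homological exponentiation on the right-hand side of the statement. Since $\chi_{-y}$ is a pre-lambda ring homomorphism, applying it to the Euler decomposition of $A(t)$ yields $1+\sum_{n\geq 1}\chi_{-y}(A_n)t^n=\prod_{k=1}^{\infty}(1-t^k)^{-\chi_{-y}(\beta_k)}$ in $\Z[y][[t]]$; that is, the numbers $\chi_{-y}(\beta_k)$ are precisely the Euler exponents of the image series $1+\sum_n \chi_{-y}(A_n)t^n$. By the very definition (\ref{exp-homology2}) of the homological exponentiation $\big(1+\sum_n a_n t^n d^n_*\big)^{-(\cdot)}$ via the Euler product of its base, the product $\bigodot_k (1-t^k\cdot d^k_*)^{-\chi_{-y}(\beta_k)\cdot {T_{(-y)}}_*(X)}$ is exactly $\big(1+\sum_{n}\chi_{-y}(A_n)t^n d^n_*\big)^{{T_{(-y)}}_*(X)}$, which completes the proof.

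The step that needs the most care is the compatibility of the two Euler decompositions under $\chi_{-y}$, i.e. checking that the homological Adams operations $\Psi_r$ built into (\ref{exp-homology2}) match the Adams operations of the pre-lambda structure on $\Z[y]$ transported through $\chi_{-y}$; this is what guarantees that $\chi_{-y}(\beta_k)$ really are the homological Euler exponents of the base series and not merely abstract coefficients. Once this bookkeeping is in place the argument is purely formal, and indeed Theorem~\ref{thm1} is recovered as the special case $A_n=[{\rm Hilb}^n_{\C^d,0}]$.
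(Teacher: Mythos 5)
Your proposal is correct and follows essentially the same route as the paper, which proves Theorem \ref{mt2} by exactly the argument used for Theorem \ref{thm1}: Euler product decomposition of $A(t)$, properties (iii') and (vii') of the motivic exponentiation, the fact that ${T_{(-y)}}_*$ is a ring homomorphism commuting with the power operations $P_k$, and formula (\ref{eqmot2}) applied factorwise. Your closing remark on matching the Adams operations of $\Z[y]$ (via $\chi_{-y}$ being a pre-lambda ring homomorphism) with the homological Adams operations built into definition (\ref{exp-homology2}) is precisely the bookkeeping the paper's construction is set up to guarantee.
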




\section{Hirzebruch classes of virtual motives of Hilbert schemes of threefolds}
In this section we show that our methods can be combined with the approach of \cite{BBS}, where virtual motives of Hilbert schemes of threefolds are defined in relation with Donaldson-Thomas invariants.

Let $f:M \to \C$ be a regular function on a smooth quasi-projective variety, with singular locus $$Z=\{df=0\} \subset M.$$ The {\it relative virtual motive} of $Z$ is defined as in \cite{BBS} by the formula $$[Z]_{\rm relvir} = -\LB^{-\frac{{\rm dim} M}{2}} [\phi_f]_Z \in \K_0(var/Z)[\LB^{-1/2}],$$ in terms of motivic vanishing cycles $\phi_f$ (where we ignore the monodromy action of roots of unity, which is not needed here). This is a motivic refinement of Behrend's constructible function $\nu_Z$ constructed in \cite{Beh}.
Pushing down to a point, we get  the {\it virtual motive} $$[Z]_{\rm vir} = -\LB^{-\frac{{\rm dim} M}{2}} [\phi_f] \in \K_0(var/\C)[\LB^{-1/2}].$$
A priori, these (relative) virtual motives may depend on the chosen equation $f$ with degeneracy locus $Z$.

In particular, this construction applies to the Hilbert scheme $(\C^3)^{[n]}$, which can be realized as a degeneracy locus as above after a choice of a volume form on $\C^3$, see \cite{BBS}[Prop.2.1] for more details. By restriction to the fiber above $0 \in \C^3$, one gets a {\it virtual motive} for the punctual Hilbert scheme $$[{\rm Hilb}^n_{\C^3,0}]_{\rm vir} \in \K_0(var/\C)[\LB^{-1/2}].$$
As shown in \cite{BBS}, the definition of these virtual motives is independent of the choice of the volume form on $\C^3$.

Moreover, the proof of \cite{BBS}[Prop.2.6] gives the following generating series for the push-forwards under the Hilbert-Chow morphisms of the (relative) virtual motives of $(\C^3)^{[n]}$:
\begin{equation}\label{mainb}
1+\sum_{n \geq 1} {\pi_n}_*[(\C^3)^{[n]}]_{\rm relvir} \cdot t^n=\left(1+\sum_{n \geq 1} \left[{\rm Hilb}^n_{\C^3,0}\right]_{\rm vir} \cdot t^n\right)^{\C^3}.
\end{equation}
This is a relative version of the following virtual counterpart of Thm.\ref{Hilbert-GLM} (see \cite{BBS}[Prop.3.2]):
\begin{equation}\label{mainc}
1+\sum_{n \geq 1} [(\C^3)^{[n]}]_{\rm vir} \cdot t^n=\left(1+\sum_{n \geq 1} \left[{\rm Hilb}^n_{\C^3,0}\right]_{\rm vir} \cdot t^n\right)^{[\C^3]}.
\end{equation}

Note that in formula (\ref{mainb}) we need to use the extension of our {\it geometric} motivic exponentiation $(A(t))^X$ to normalized power series $A(t) \in K_0(var/\C)[\LB^{-1/2}][[t]]$. This extension will be constructed below in two steps. \\

Let us first explain this {\it geometric} extension at the level of Grothendieck groups. Let $S_0^{\pm}(var/X)$ be the relative semi-ring of $\Z_2$-graded quasi-projective varieties over the fixed  quasi-projective variety $X$, which is defined as the semi-ring $S_0(var/X)$ using $\Z_2$-graded varieties, i.e., $Z=Z_0 \uplus Z_1$ with the commutative (graded) product and  external product $\boxtimes$ induced from the usual fiber and cartesian product. Note that there are canonical functorial semi-ring homomorphisms $${\rm can}: S_0(var/X) \to S_0^{\pm}(var/X) \to K_0(var/X)$$ defined by
$$[Z_0 \to X] \mapsto [Z_0 \uplus \emptyset \to X] \ \ \ \  {\rm and} \ \ \ \ 
 [Z_0 \uplus Z_1 \to X] \mapsto [Z_0 \to X] - [Z_1 \to X],$$
with
$$ [Z_0 \uplus Z_1 \to X] \boxtimes  [Z'_0 \uplus Z'_1 \to X'] \mapsto \left( [Z_0 \to X] - [Z_1 \to X] \right) \boxtimes \left( [Z'_0 \to X'] - [Z'_1 \to X'] \right).$$
 The first homomorphism above is an injection, whereas the second one is an epimorphism. 
Similar considerations apply to equivariant situations, which we only need for $X$ a point space, in the context of equivariant external products of ($\Z_2$-graded and virtual) varieties. More precisely, we have the following result (for a proof see \cite{BBS}[Lem.1.4]):
\begin{lemma} There exist canonical equivariant external products $\boxtimes^n$ induced from the cartesian products of varieties, which fit into a commutative diagram:
\begin{equation}
\begin{CD}
S_0(var/\C) @>\boxtimes^n >> S_0^{S_n}(var/\C) \\
@VVV @VVV\\
S_0^{\pm}(var/\C) @>\boxtimes^n >> S_0^{S_n,^{\pm}}(var/\C) \\
@VVV @VVV\\
K_0(var/\C) @>\boxtimes^n >> K_0^{S_n}(var/\C) .
\end{CD}
\end{equation}
\end{lemma}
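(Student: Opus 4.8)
The plan is to construct the equivariant external products $\boxtimes^n$ explicitly on representatives and then verify that they descend to the semi-ring quotients and fit into the stated commutative diagram. First I would recall that for a $\Z_2$-graded variety $[Z_0 \uplus Z_1 \to \C]$ in $S_0^{\pm}(var/\C)$, the $n$-fold cartesian product $Z^n = (Z_0 \uplus Z_1)^{\times n}$ carries both a natural $S_n$-action permuting the factors and an induced $\Z_2$-grading, where the graded pieces are indexed by the parity of the number of odd factors appearing in each summand of the expansion of $(Z_0 \uplus Z_1)^{\times n}$. Concretely, a point of $Z^n$ lies in the odd part precisely when an odd number of its coordinates lie in $Z_1$; this is the Koszul-type sign rule, and the key point is that the $S_n$-permutation action respects this grading since permuting factors does not change the parity of the count of odd coordinates. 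This defines $\boxtimes^n: S_0^{\pm}(var/\C) \to S_0^{S_n,\pm}(var/\C)$.

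Next I would check functoriality and compatibility with the vertical maps. For the top square, the vertical maps are the inclusions $\mathrm{can}: S_0(var/\C) \to S_0^{\pm}(var/\C)$ sending $Z_0 \mapsto Z_0 \uplus \emptyset$; since $(Z_0 \uplus \emptyset)^{\times n} = Z_0^{\times n} \uplus \emptyset$ carries the trivial (purely even) grading and the standard $S_n$-action, the top square commutes on the nose. For the bottom square, the vertical maps are the epimorphisms sending $[Z_0 \uplus Z_1] \mapsto [Z_0] - [Z_1]$ to $K_0$, and the claim is that applying $\boxtimes^n$ and then projecting equals projecting and then applying $\boxtimes^n$ in $K_0^{S_n}(var/\C)$. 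This amounts to verifying that the image of the graded product $(Z_0 \uplus Z_1)^{\times n}$ under the projection, with its sign convention, equals the equivariant external product of the virtual class $[Z_0] - [Z_1]$; this is exactly the binomial-type expansion where the odd summands acquire a minus sign, matching the expansion of $([Z_0] - [Z_1])^{\boxtimes n}$ in the Grothendieck group. I would note that the equivariant Grothendieck group $K_0^{S_n}(var/\C)$ and the equivariant external product on it are the target structures whose existence and scissor-relation compatibility are already established in \cite{BBS}, so I may invoke them freely.

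The main technical step, and the one I expect to be the chief obstacle, is verifying that $\boxtimes^n$ is well-defined modulo the scissor relations defining the semi-rings, in the presence of both the $\Z_2$-grading and the $S_n$-equivariance. For a closed subvariety inclusion $[Z] = [W] + [Z \setminus W]$, the product $Z^n$ stratifies according to how many factors land in $W$ versus $Z \setminus W$, and one must check that the resulting decomposition is $S_n$-equivariant and grading-compatible. The subtlety is that the $S_n$-action does not preserve these strata individually but permutes them, so the well-definedness must be formulated at the level of $S_n$-equivariant classes rather than strata-by-strata; this is precisely the content of the equivariant scissor relation in $S_0^{S_n,\pm}(var/\C)$. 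I would organize this verification by the multinomial decomposition $Z^n = \biguplus_{a+b=n} \binom{n}{a} \cdot (W^{\times a} \times (Z\setminus W)^{\times b})$ where the combinatorial factor is realized as an $S_n$-orbit structure, and then track the induced gradings through each summand.

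Finally I would assemble these verifications into the commutative diagram, observing that the middle row $S_0^{\pm}(var/\C) \to S_0^{S_n,\pm}(var/\C)$ is the new construction and that the outer two rows are either the classical geometric external product of \cite{GLM0,GLM} (top) or the equivariant Grothendieck-group product of \cite{BBS} (bottom). Since the paper explicitly directs the reader to \cite{BBS}[Lem.1.4] for the proof, I would keep the argument brief, emphasizing that the only genuinely new ingredient beyond the ungraded case is the bookkeeping of the Koszul sign under permutations, and that all compatibilities with scissor relations follow from the corresponding ungraded statements applied separately to even and odd parts. The commutativity of both squares then reduces to the observation that the grading-to-sign passage $[Z_0 \uplus Z_1] \mapsto [Z_0] - [Z_1]$ is a ring homomorphism intertwining the graded cartesian product with the virtual external product.
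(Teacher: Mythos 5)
Before comparing: the paper offers no argument of its own for this lemma --- its entire ``proof'' is the citation to \cite{BBS}[Lem.~1.4] --- so your sketch must be judged on its mathematical merits. Your construction of the middle row (cartesian power with permutation action and parity grading), the top square, and the GLM-style multinomial bookkeeping $Z^n=\biguplus_{a+b=n}\mathrm{Ind}_{S_a\times S_b}^{S_n}\bigl(W^a\times(Z\setminus W)^b\bigr)$ for compatibility with scissor relations are all fine. The genuine gap is the bottom square, in two respects. First, your appeal to ``the equivariant external product on $K_0^{S_n}(var/\C)$ \ldots already established in \cite{BBS}, so I may invoke them freely'' is circular: the existence of a well-defined $\boxtimes^n$ on $K_0(var/\C)$ compatible with the vertical maps is precisely the assertion of the lemma, i.e.\ exactly what \cite{BBS}[Lem.~1.4] is being cited for; it is the one thing a proof attempt cannot assume.

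Second, and more seriously, the verification you propose for the bottom square is false as stated. Since $S_0^{\pm}(var/\C)\twoheadrightarrow K_0(var/\C)$ is surjective, commutativity of the bottom square forces the bottom $\boxtimes^n$ to be the descent of your middle map, so one must check that the signed class of $(Z_0\uplus Z_1)^{\times n}$ in $K_0^{S_n}(var/\C)$ depends only on $[Z_0]-[Z_1]$ --- and for your recipe it does not. Take $n=2$, $Z=(A\sqcup B)\uplus B$ and $W=A\uplus\emptyset$, which have the same virtual class $[A]$. Your parity-signed expansion gives $[A^2]+2[B^2,\mathrm{swap}]-[S_2\times B^2]$ for $Z$ versus $[A^2]$ for $W$, and $2[B^2,\mathrm{swap}]\neq[S_2\times B^2]$ in $K_0^{S_2}(var/\C)$: already for $B=pt$ one has $2[pt]\neq[S_2]$, since the $S_2$-equivariant Hodge--Deligne realization distinguishes $2\cdot\mathrm{triv}$ from $\mathrm{triv}\oplus\mathrm{sign}$ (and the discrepancy survives pairing with a free $S_2$-variety $U$ and quotienting, e.g.\ $2[U/S_2]\neq[U]$ for $U=\C^*$ with $z\mapsto -z$, so it cannot be killed by any relations compatible with the intended application to the power structure). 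The root cause is that your ``Koszul-type sign rule'' only records the \emph{grading} of $Z^{\times n}$ --- your observation that permutations preserve parity is true but insufficient --- whereas the actual Koszul rule must twist the \emph{$S_n$-equivariant structure} on blocks of odd factors by the sign character: in pre-lambda-ring terms one needs $\sigma^n(-[Y])=(-1)^n\lambda^n([Y])$ (configuration spaces $Y^{\{n\}}$), while your signed binomial expansion of $([Z_0]-[Z_1])^{\boxtimes n}$ produces $(-1)^n\sigma^n([Y])$ instead. Implementing this sign twist correctly is the real content of \cite{BBS}[Lem.~1.4]; alternatively, the descent to virtual coefficients can be achieved at the level of whole power series via multiplicativity and the factorization $A=B\cdot C^{-1}$, as the paper does around diagram (\ref{diag}), but not coefficientwise in the naive way you propose.
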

Using these equivariant external products, we can extend our {\it geometric} definition of a motivic exponentiation also  to normalized power series $1+\sum_{i \geq 1} A_i t^i$ with either $\Z_2$-graded  coefficients (i.e., $A_i \in S_0^{\pm}(var/\C)$) or with virtual coefficients (i.e. $A_i \in K_0(var/\C)$). In this way, we get a commutative diagram of semi-group homomorphisms:
\begin{equation}
\begin{CD}
(1+ tS_0(var/\C)[[t]],\cdot) @>{(-)^X} >> (PS_0(X),\odot) \\
@VVV @VVV\\
(1+ tS_0^{\pm}(var/\C)[[t]],\cdot) @>{(-)^X} >> (PS^{\pm}_0(X),\odot) \\
@VVV @VVV\\
(1+ tK_0(var/\C)[[t]],\cdot) @>{(-)^X} >> (PK_0(X),\odot).
\end{CD}
\end{equation}
Note that the top two exponentiation maps are semi-group homomorphisms by the construction of the geometric motivic power structure given in \cite{GLM0,GLM}, which directly applies also to the $\Z_2$-graded context. The bottom exponentiation in the above diagram is then also a semi-group homomorphism by the surjectivity of the left bottom vertical arrow. By uniqueness, this {\it geometrically} defined exponentiation
$$(-)^X: (1+ tK_0(var/\C)[[t]],\cdot) \to  (PK_0(X),\odot)$$  has to agree with the exponentiation constructed already in Section \ref{motexp}, since they coincide on (the image of) $(1+ tS_0(var/\C)[[t]],\cdot)$.

In the second step, we extend our geometric definition of a motivic exponentiation to a normalized power series 
with localized coefficients in $K_0(var/\C)[\LB^{-1/2}]$.
This can be defined as in \cite{GLM0,GLM,BBS} by modifying (\ref{extmot}) using  (for $A_i \in K_0(var/\C)$ and $c_i \in \Z$) the identities: 
\begin{multline*} \left( (\prod_i X^{k_i}) \setminus \Delta \right) \times \prod_i \left((-\LB^{1/2})^{c_i}A_i\right)^{k_i} / \prod_i S_{k_i} \\ =(-\LB^{1/2})^{\sum_i c_ik_i} \left( (\prod_i X^{k_i}) \setminus \Delta \right) \times \prod_i A_i^{k_i} / \prod_i S_{k_i} \in K_0(var/\Xs)[\LB^{-1/2}].
\end{multline*}

Altogether, we get a {\it geometric} motivic exponentiation:
$$(-)^X: (1+ tK_0(var/\C)[\LB^{-1/2}][[t]],\cdot) \to  (PK_0(X)[\LB^{-1/2}],\odot)$$ 
on the Pontrjagin ring  associated to the covariant functor $F(-)=K_0(var/-)[\LB^{-1/2}]$.
By construction, besides the usual rules of our motivic exponentiation, we also have the following: \begin{equation}\label{99}
(A(-t))^X=(A(t))^X \vert_{t \mapsto -t}.
\end{equation}

Note that Theorem \ref{mt2} also holds for power series with coefficients in the localized Grothendieck ring $K_0(var/\C)[\LB^{-1/2}]$. Here we choose the convention $$\chi_{-y}(-\LB^{1/2}):=y^{1/2} \ \ \ \ {\rm and} \ \ \ \ \Psi_r(y^{1/2}):=y^{r/2},$$ which fits for $y=1$ with the convention $\chi(\LB^{1/2})=-1$ used in \cite{BBS}.

By applying the un-normalized Hirzebruch class transformation to formula (\ref{mainb}) we get by (this extension of) Theorem \ref{mt2}  the following result:
\begin{theorem} In the above notations, the following formula holds:
\begin{equation}\label{100}
\sum_{n=0}^{\infty} {\pi_n}_*{T_{(-y)}}_*([(\C^3)^{[n]}]_{\rm relvir}) \cdot t^n = \left(1+ \sum_{n=1}^{\infty} \chi_{-y}([{\rm Hilb}^n_{\C^3,0}]_{\rm vir}) \cdot t^n \cdot d^n_* \right)^{{T_{(-y)}}_*(\C^3)}
\end{equation}
Moreover, 
\begin{equation}\label{101}
\sum_{n=0}^{\infty} {\pi_n}_*{T_{(-y)}}_*([(\C^3)^{[n]}]_{\rm relvir}) \cdot (-t)^n =  \prod_{k=1}^{\infty} (1-t^k \cdot d^k_*)^{-\chi_{-y}(\alpha_k) \cdot {T_{(-y)}}_*(\C^3)},
\end{equation}
where the coefficients $\alpha_k \in K_0(var/{\C})[\LB^{-1/2}]$ of the corresponding Euler product are explicitly given by 
$$\alpha_k=\LB^{-3} \cdot (-\LB^{1/2})^{4-k} \cdot \frac{1-(-\LB^{1/2})^{2k}}{1-\LB}=\frac{(-\LB^{1/2})^{-k}-(-\LB^{1/2})^{k}}{\LB (1-\LB)}.$$
\end{theorem}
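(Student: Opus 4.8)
\medskip

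The plan is to deduce both identities from the Behrend--Bryan--Szendr\"oi formula (\ref{mainb}) by applying the Hirzebruch class transformation ${T_{(-y)}}_*$, exactly in the spirit of the proof of Theorem \ref{thm1}, but now working over the localized ring $K_0(var/\C)[\LB^{-1/2}]$. First I would establish (\ref{100}). Since the geometric motivic exponentiation $(-)^{\C^3}$ has been extended to normalized power series with coefficients in $K_0(var/\C)[\LB^{-1/2}]$, and since (as already noted) Theorem \ref{mt2} persists over this localized ring under the conventions $\chi_{-y}(-\LB^{1/2})=y^{1/2}$ and $\Psi_r(y^{1/2})=y^{r/2}$, I simply apply ${T_{(-y)}}_*$ to the identity (\ref{mainb}) with coefficients $A_n=[{\rm Hilb}^n_{\C^3,0}]_{\rm vir}$. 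On the left-hand side, the commutation of ${T_{(-y)}}_*$ with the proper push-forwards ${\pi_n}_*$ turns ${T_{(-y)}}_*({\pi_n}_*[(\C^3)^{[n]}]_{\rm relvir})$ into ${\pi_n}_*{T_{(-y)}}_*([(\C^3)^{[n]}]_{\rm relvir})$, while on the right-hand side the extended Theorem \ref{mt2} produces precisely the exponentiation displayed in (\ref{100}). The only points to check are that ${T_{(-y)}}_*$ remains a ring homomorphism for $\odot$ and commutes with the power operations $P_k$ over the localized ring, which is guaranteed by Remark \ref{rem-main} in its extension to the localized coefficients.

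Next I would pass from (\ref{100}) to the Euler-product form (\ref{101}) by the formal substitution $t\mapsto -t$, which is legitimate since (\ref{100}) is an identity of power series in $PH_*(\C^3)$. After this substitution the base of the exponentiation becomes $1+\sum_{n\geq1}\chi_{-y}([{\rm Hilb}^n_{\C^3,0}]_{\rm vir})\cdot(-t)^n$, and to evaluate (\ref{exp-homology2}) I need its Euler product decomposition. Defining $\alpha_k\in K_0(var/\C)[\LB^{-1/2}]$ by the unique decomposition
$$1+\sum_{n\geq1}[{\rm Hilb}^n_{\C^3,0}]_{\rm vir}\cdot(-t)^n=\prod_{k=1}^{\infty}(1-t^k)^{-\alpha_k},$$
and using that $\chi_{-y}$ is a pre-lambda ring homomorphism, hence commutes with Euler products, I obtain
$$1+\sum_{n\geq1}\chi_{-y}([{\rm Hilb}^n_{\C^3,0}]_{\rm vir})\cdot(-t)^n=\prod_{k=1}^{\infty}(1-t^k)^{-\chi_{-y}(\alpha_k)}.$$
Feeding this into the definition (\ref{exp-homology2}) of the homological exponentiation immediately yields the factored form (\ref{101}). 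The compatibility of the sign substitution with the underlying geometric exponentiation is recorded in property (\ref{99}), so that the $\alpha_k$ defined above are genuinely the geometric Euler exponents.

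Finally, the substantive and, I expect, hardest step is the explicit evaluation of the $\alpha_k$. Here I would start from the generating series for the virtual motives $[{\rm Hilb}^n_{\C^3,0}]_{\rm vir}$ computed in \cite{BBS} and extract its Euler product, either by direct comparison with the known refined MacMahon-type product expansion, or by applying Gorsky's inversion formula \cite{Go} to the coefficients, as was done for the exponents in Theorem \ref{thm1}. The delicate part is the bookkeeping of half-integer powers of $\LB$ together with the sign substitution $t\mapsto -t$; carrying this out yields the closed form
$$\alpha_k=\frac{(-\LB^{1/2})^{-k}-(-\LB^{1/2})^{k}}{\LB(1-\LB)},$$
which, after writing $\frac{1-\LB^k}{1-\LB}=1+\LB+\cdots+\LB^{k-1}$, coincides with the alternative expression $\alpha_k=\LB^{-3}\cdot(-\LB^{1/2})^{4-k}\cdot\frac{1-(-\LB^{1/2})^{2k}}{1-\LB}$. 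As consistency checks I would verify the lowest exponent $\alpha_1=-\LB^{-3/2}$, matching $\alpha_1=-[{\rm Hilb}^1_{\C^3,0}]_{\rm vir}$ with $[{\rm Hilb}^1_{\C^3,0}]_{\rm vir}=\LB^{-3/2}$ the virtual motive of the reduced point, and confirm that $\chi(\alpha_k)=k$ under the convention $\chi(\LB^{1/2})=-1$, thereby recovering the MacMahon exponents underlying the dimension-zero theory.
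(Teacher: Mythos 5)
Your proposal is correct and follows essentially the same route as the paper: the paper likewise obtains (\ref{100}) by applying ${T_{(-y)}}_*$ to the Behrend--Bryan--Szendr\"oi identity (\ref{mainb}) via the extension of Theorem \ref{mt2} to coefficients in $K_0(var/\C)[\LB^{-1/2}]$ with the conventions $\chi_{-y}(-\LB^{1/2})=y^{1/2}$ and $\Psi_r(y^{1/2})=y^{r/2}$, and then passes to (\ref{101}) by the switch $t\mapsto -t$ using (\ref{99}), citing the proof of \cite{BBS}[Thm.3.3] for the explicit Euler exponents $\alpha_k$ exactly as you defer to the BBS generating series. Your consistency checks ($\alpha_1=-\LB^{-3/2}$ matching $-[{\rm Hilb}^1_{\C^3,0}]_{\rm vir}$, and $\chi(\alpha_k)=k$) are a sound addition beyond what the paper records.
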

 
Note that in the above result, ${T_{(-y)}}_*(\C^3)=[\C^3] \in H^{BM}_{even}(\C^3)$ and the $\alpha_k$'s are computed  in the proof of \cite{BBS}[Thm.3.3]. For this explicit computation it is important to switch from the variable $t$ to $-t$ using (\ref{99}).\\

If one wants to extend the above results from $\C^3$ to an arbitrary smooth quasi-projective threefold $X$, one is faced with the problem that the Hilbert scheme $X^{[n]}$ is not known to be in general a global degeneracy locus, so the (relative) virtual motive $[X^{[n]}]_{\rm relvir}$ is not defined. Even so, the corresponding global Behrend constructible function is well-defined, see \cite{Beh}. However, by imitating the result of Theorem \ref{Hilbert-BBS}, one can still define the push-forward
$${\pi_n}_*[X^{[n]}]_{\rm relvir} \in K_0(var/\Xs)[\LB^{-1/2}]$$
by the generating series formula:
\begin{equation}\label{maine}
1+\sum_{n \geq 1} {\pi_n}_*[X^{[n]}]_{\rm relvir} \cdot t^n:=\left(1+\sum_{n \geq 1} \left[{\rm Hilb}^n_{\C^3,0}\right]_{\rm vir} \cdot t^n\right)^{X} \in PK_0(X)[\LB^{-1/2}].
\end{equation}
This is a relative version of the following formula established in \cite{BBS}[Prop.3.2]:
\begin{equation}\label{mainf}
1+\sum_{n \geq 1} [X^{[n]}]_{\rm vir} \cdot t^n=\left(1+\sum_{n \geq 1} \left[{\rm Hilb}^n_{\C^3,0}\right]_{\rm vir} \cdot t^n\right)^{[X]} \in PK_0(pt)[\LB^{-1/2}].
\end{equation}
By applying the un-normalized Hirzebruch class transformation to formula (\ref{maine}) we get by (the extension of) Theorem \ref{mt2}  the following result:
\begin{theorem}\label{mainl} For any smooth quasi-projective threefold $X$ the following formula holds:
\begin{equation}\label{110}
\sum_{n=0}^{\infty} {T_{(-y)}}_*({\pi_n}_*[X^{[n]}]_{\rm relvir}) \cdot t^n = \left(1+ \sum_{n=1}^{\infty} \chi_{-y}([{\rm Hilb}^n_{\C^3,0}]_{\rm vir}) \cdot t^n \cdot d^n_* \right)^{{T_{(-y)}}_*(X)}
\end{equation}
Moreover, 
\begin{equation}\label{111}
\sum_{n=0}^{\infty} {T_{(-y)}}_*({\pi_n}_*[X^{[n]}]_{\rm relvir}) \cdot (-t)^n =  \prod_{k=1}^{\infty} (1-t^k \cdot d^k_*)^{-\chi_{-y}(\alpha_k) \cdot {T_{(-y)}}_*(X)},
\end{equation}
with coefficients $\alpha_k \in K_0(var/{\C})[\LB^{-1/2}]$ given as before  by 
$$\alpha_k=\frac{(-\LB^{1/2})^{-k}-(-\LB^{1/2})^{k}}{\LB(1-\LB)}.$$
\end{theorem}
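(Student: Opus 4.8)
The plan is to apply the un-normalized Hirzebruch class transformation ${T_{(-y)}}_*$ to the defining formula (\ref{maine}) and to invoke the extension of Theorem \ref{mt2} to power series with coefficients in the localized ring $K_0(var/\C)[\LB^{-1/2}]$; the argument then runs exactly parallel to the proof of Theorem \ref{thm1}, the only difference being that the $X$-dependent left-hand side of (\ref{maine}) is a \emph{definition} rather than a geometric identity.

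First I would record that (\ref{maine}) presents $1+\sum_{n\geq 1}{\pi_n}_*[X^{[n]}]_{\rm relvir}\cdot t^n$ as the geometric motivic exponentiation $(A(t))^X$ of the single base
$$A(t)=1+\sum_{n\geq 1}\left[{\rm Hilb}^n_{\C^3,0}\right]_{\rm vir}\cdot t^n \in 1+tK_0(var/\C)[\LB^{-1/2}][[t]],$$
which is \emph{independent of $X$}. Applying the ring homomorphism ${T_{(-y)}}_*$ termwise turns the left-hand side into $\sum_n {T_{(-y)}}_*({\pi_n}_*[X^{[n]}]_{\rm relvir})\,t^n$ and the right-hand side into ${T_{(-y)}}_*\big((A(t))^X\big)$. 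By the extension of Theorem \ref{mt2} to localized coefficients, applied with $A_n=[{\rm Hilb}^n_{\C^3,0}]_{\rm vir}$, the latter equals $\big(1+\sum_{n}\chi_{-y}([{\rm Hilb}^n_{\C^3,0}]_{\rm vir})\cdot t^n\cdot d^n_*\big)^{{T_{(-y)}}_*(X)}$, which is precisely (\ref{110}). The only point to verify here is that Theorem \ref{mt2} genuinely survives the localization: this holds because ${T_{(-y)}}_*$ and $\chi_{-y}$ remain ring homomorphisms commuting with the power operations $P_k$ and proper push-forward, and because the geometric exponentiation was already extended to $K_0(var/\C)[\LB^{-1/2}][[t]]$ in the two-step construction above, so the same chain of equalities as in \S\ref{motexp} and in the proof of Theorem \ref{thm1} applies verbatim under the conventions $\chi_{-y}(-\LB^{1/2})=y^{1/2}$ and $\Psi_r(y^{1/2})=y^{r/2}$.

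For the Euler-product form (\ref{111}) I would first pass to the variable $-t$ at the motivic level, using property (\ref{99}) to write $\sum_n {\pi_n}_*[X^{[n]}]_{\rm relvir}\cdot(-t)^n=(A(-t))^X$, and then develop the base $A(-t)$ into its unique Euler product $\prod_{k\geq 1}(1-t^k)^{-\alpha_k}$. The crucial observation is that these exponents $\alpha_k$ depend only on $A(t)$, hence are \emph{independent of $X$} and therefore coincide with those computed for $X=\C^3$ in the preceding theorem (equation (\ref{101})) and in \cite{BBS}[Thm.3.3], namely $\alpha_k=\frac{(-\LB^{1/2})^{-k}-(-\LB^{1/2})^{k}}{\LB(1-\LB)}$. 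Exactly as in the proof of Theorem \ref{thm1}, properties (iii') and (vii') give $(A(-t))^X=\bigodot_{k\geq 1}P_k\big(((1-t)^{-\alpha_k})^X\big)$; applying ${T_{(-y)}}_*$, which commutes with $\odot$ and with each $P_k$, and invoking the corollary (\ref{eqmot2}) of Theorem \ref{mot} then converts each factor into $(1-t^k\cdot d^k_*)^{-\chi_{-y}(\alpha_k)\cdot{T_{(-y)}}_*(X)}$, yielding (\ref{111}).

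I expect the main obstacle to be bookkeeping rather than conceptual. The substantive checks are all tied to the localization and the sign substitution: one must confirm that the Euler-product/pre-lambda machinery is compatible with adjoining $\LB^{-1/2}$, that $\chi_{-y}$ stays a pre-lambda ring homomorphism on the localized ring (so that $\chi_{-y}(\alpha_k)$ may be obtained by forming the Euler product of the base \emph{before} applying the genus), and that the convention $\chi_{-y}(-\LB^{1/2})=y^{1/2}$ forces, and is consistent with, $\Psi_r(y^{1/2})=y^{r/2}$ in the homological Adams operations defining $(1-t^k\cdot d^k_*)^{-(\cdot)}$. Once these compatibilities are secured, no new computation beyond the $X=\C^3$ case is required, since both the universal exponents $\alpha_k$ and their images $\chi_{-y}(\alpha_k)$ have already been determined there.
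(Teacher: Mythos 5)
Your proposal is correct and follows essentially the same route as the paper: formula (\ref{110}) is obtained by applying ${T_{(-y)}}_*$ to the defining identity (\ref{maine}) via the localized extension of Theorem \ref{mt2} (with the conventions $\chi_{-y}(-\LB^{1/2})=y^{1/2}$, $\Psi_r(y^{1/2})=y^{r/2}$), and formula (\ref{111}) by the sign substitution (\ref{99}) together with the Euler-product argument of Theorem \ref{thm1}, the exponents $\alpha_k$ being universal (independent of $X$) and already computed in \cite{BBS}. Your explicit observation that the base $A(t)$ and hence the $\alpha_k$ do not depend on $X$ is exactly the point the paper's phrase ``given as before'' relies on.
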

Note that the identification
$${T_{(-y)}}_*({\pi_n}_*[X^{[n]}]_{\rm relvir}) = {\pi_n}_*{T_{(-y)}}_*([X^{[n]}]_{\rm relvir})$$ does not make sense except for $X=\C^3$. Nevertheless, a suitable specialization at $y=1$ is well-defined and reduces to the functoriality of the MacPherson Chern classes of Behrend's constructible function of the Hilbert scheme:
\begin{equation}\label{112}c_*({\pi_n}_*\nu_{X^{[n]}}) = {\pi_n}_*{c}_*(\nu_{X^{[n]}}).\end{equation}
This will be explained in the next section.


\section{MacPherson-Chern classes}\label{MPChern} In order to explain (\ref{112}) and the formula of Corollary \ref{cormain2} about the MacPherson-Chern classes of Hilbert schemes, we need to say a few words about the normalized version of our main result in Theorem \ref{thm1}, as well as in Theorems \ref{mt2} and \ref{mainl}. 

Recall that the normalized homology Hirzebruch class $\widehat{T}_{y*}$ is defined as  $$\widehat{T}_{y*}:=\Psi_{(1+y)} \circ T_{y*},$$ with 
$$\Psi_{(1+y)}:H^{BM}_{even}(X) \otimes \Q[y] \to H^{BM}_{even}(X) \otimes \Q[y,(1+y)^{-1}]$$ the normalization functor given in degree $2k$ by multiplication by $(1+y)^{-k}$. Moreover, by letting $y=-1$, we get that $$\widehat{T}_{-1*}(X)=c_*(X) \otimes \Q$$
is the rationalized homology Chern class of MacPherson. \\

By applying the normalization 
functor ${\Psi_{(1-y)}}$ (note that due to our indexing conventions, $y$ is replaced here by $-y$) to the left-hand side of the formula in Theorem \ref{thm1},  we get the generating series $\sum_{n\geq 0} \pi_{n*}\widehat{T}_{(-y)_*}(X^{[n]}) \cdot t^n$, which at $y=1$ yields the left-hand side of the Chern class formula of Corollary  \ref{cormain2}. 
Applying the same procedure to the right-hand side of 
the identity in Theorem \ref{thm1}, we first note that the normalization functor ${\Psi_{(1-y)}}$ commutes with push-forward for proper maps, as well as with exterior products, therefore ${\Psi_{(1-y)}}$ commutes with the Pontrjagin product (hence with the exponential) and it also commutes with the power operations $P_k$ on the homology Pontrjagin ring $PH_*(X)$.   
Moreover, as shown in \cite[Lemma 4.2]{CMSSY}, the following identification of transformations holds:
\begin{equation}\label{cmssy}
\lim_{y \to 1}  {\Psi_{(1-y)}}\Psi_r{{T_{(-y)}}_*}(-)=\widehat{T}_{-1*}(-) = c_*(-) \otimes \Q
:K_0(var/X) \to H^{BM}_{even}(X;\Q). 
\end{equation}
So by applying the identity (\ref{cmssy}) to the distinguished element $[id_X]\in K_0(var/X)$, we obtain that: 
\begin{equation}
\lim_{y \to 1}  {\Psi_{(1-y)}}\Psi_r{{T_{(-y)}}_*}(X)=\widehat{T}_{-1*}(X)=c_*(X) \otimes \Q . 
\end{equation}
In other words, after specializing the parameter $y$ 
to
the value $y=1$, the normalization functor  ${\Psi_{(1-y)}}$ ``cancels out" the Adams operation $\Psi_r$. Corollary \ref{cormain2} follows now readily.\\

The same argument yields the following counterparts of Theorems \ref{mt2} and \ref{mainl} for the MacPherson Chern class transformation $c_*:K_0(var/-) \to H^{BM}_{ev}(-;\Q)$ (for a definition of this motivic lift of $c_*$, see \cite{BSY}).
\begin{theorem}\label{mt3} Let $X$ be a quasi-projective variety and $1+\sum_{n \geq 1} A_n t^n \in K_0(var/\C)[[t]]$ 
or in $K_0(var/\C)[\LB^{-1/2}][[t]]$ be a normalized power series. Then:
$$c_* \left( (1+\sum_{n \geq 1} A_n t^n )^X\right)=\left(1+ \sum_{n=1}^{\infty} \chi(A_n)\cdot t^n \cdot d^n_* \right)^{c_*(X)} .$$
\end{theorem}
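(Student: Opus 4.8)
The plan is to obtain Theorem \ref{mt3} from Theorem \ref{mt2} by applying the normalization functor $\Psi_{(1-y)}$ and then specializing to $y=1$, exactly along the lines already used to pass from Theorem \ref{thm1} to Corollary \ref{cormain2}. Concretely, I would apply $\Psi_{(1-y)}$ to both sides of the identity of Theorem \ref{mt2} and let $y\to 1$. On the left-hand side, $\Psi_{(1-y)}\circ {T_{(-y)}}_*=\widehat{T}_{(-y)*}$ with $\lim_{y\to 1}\widehat{T}_{(-y)*}=\widehat{T}_{-1*}=c_*\otimes\Q$; since ${T_{(-y)}}_*$ is a ring homomorphism of Pontrjagin rings and $\Psi_{(1-y)}$ commutes with proper push-forwards and exterior products (hence with $\odot$ and with the power operations $P_k$), the left-hand side becomes $c_*\bigl((1+\sum_{n\geq1}A_nt^n)^X\bigr)$, which is the left-hand side of the desired formula.

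For the right-hand side I would first pass to the Euler product. Writing $1+\sum_{n\geq1}A_nt^n=\prod_{k\geq1}(1-t^k)^{-\beta_k}$ with $\beta_k\in K_0(var/\C)$ (resp. in the localization) and using that $\chi_{-y}$ is a pre-lambda ring homomorphism (hence commutes with Euler products), the right-hand side of Theorem \ref{mt2} becomes $\prod_{k\geq1}(1-t^k\cdot d^k_*)^{-\chi_{-y}(\beta_k)\cdot{T_{(-y)}}_*(X)}$, whose $k$-th factor is $\exp\bigl(\sum_{r\geq1}\Psi_r d^{rk}_*(\chi_{-y}(\beta_k){T_{(-y)}}_*(X))\,t^{rk}/r\bigr)$. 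Using that $\Psi_r$ is multiplicative and commutes with $d^{rk}_*$, and that $\Psi_{(1-y)}$ commutes with $\exp$, with $d^{rk}_*$, and with multiplication by the degree-zero scalar $\chi_{-y}(\beta_k)$, the $r$-th summand in the exponent becomes $d^{rk}_*\bigl(\Psi_r(\chi_{-y}(\beta_k))\cdot\Psi_{(1-y)}\Psi_r{T_{(-y)}}_*(X)\bigr)\,t^{rk}/r$.

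The key input is now the identification (\ref{cmssy}), $\lim_{y\to 1}\Psi_{(1-y)}\Psi_r{T_{(-y)}}_*(-)=c_*(-)\otimes\Q$, which shows that in the limit the normalization functor cancels the homological Adams operation $\Psi_r$ \emph{uniformly in $r$}, sending $\Psi_{(1-y)}\Psi_r{T_{(-y)}}_*(X)$ to $c_*(X)$; simultaneously $\lim_{y\to1}\Psi_r(\chi_{-y}(\beta_k))=\chi_{-y}(\beta_k)\big|_{y=1}=\chi(\beta_k)$, the topological Euler characteristic. Hence each factor tends to $\exp\bigl(\sum_{r\geq1}d^{rk}_*(\chi(\beta_k)c_*(X))\,t^{rk}/r\bigr)=(1-t^k\cdot d^k_*)^{-\chi(\beta_k)c_*(X)}$, where the exponentiation is now the Chern-class one of Corollary \ref{cormain2} (with no Adams operations). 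Taking the product over $k$ and using that $\chi$ is also a pre-lambda homomorphism, so that the $\chi(\beta_k)$ are precisely the Euler exponents of $1+\sum_{n\geq1}\chi(A_n)t^n$, reassembles the right-hand side as $\bigl(1+\sum_{n\geq1}\chi(A_n)t^n d^n_*\bigr)^{c_*(X)}$, as desired.

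The step I expect to be the main obstacle is legitimizing the interchange of the limit $y\to 1$ with the infinite product and the exponential and making the cancellation of $\Psi_r$ rigorous: one must check that each structural map ($\exp$, $P_k$, the $d^r_*$, and $\odot$) commutes with $\Psi_{(1-y)}$ and with the specialization, so that (\ref{cmssy}) may be applied termwise. Since every coefficient of $t^n$ is a finite combination, the specialization is purely algebraic and may be carried out coefficientwise, which removes any convergence issue and concentrates all the content in (\ref{cmssy}). For coefficients in $K_0(var/\C)[\LB^{-1/2}]$ the same argument applies verbatim, once one uses the conventions $\chi_{-y}(-\LB^{1/2})\big|_{y=1}=1$, i.e. $\chi(\LB^{1/2})=-1$, to ensure that $\chi_{-y}(\beta_k)$ specializes correctly.
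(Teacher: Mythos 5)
Your proposal is correct and is essentially the paper's own proof: the paper derives Theorem \ref{mt3} by ``the same argument'' used to pass from Theorem \ref{thm1} to Corollary \ref{cormain2}, namely applying the normalization $\Psi_{(1-y)}$ (which commutes with proper push-forwards, exterior products, hence with $\odot$, $\exp$, the $d^r_*$ and the power operations $P_k$) and specializing at $y=1$, where the identification (\ref{cmssy}) shows the normalization cancels the Adams operations $\Psi_r$ uniformly in $r$ while $\chi_{-y}$ specializes to $\chi$ on the Euler exponents. Your coefficientwise treatment of the limit and your handling of the $\LB^{-1/2}$ conventions match the paper's intent, merely spelled out in more detail than the paper records.
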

 
 Note that the MacPherson Chern class transformation $c_*$ factorizes over the group of (algebraically) constructible functions $CF(-)$ as 
 $$K_0(var/-)[\LB^{-1/2}] \overset{e}{\to} CF(-) \overset{c_*}{\to} H^{BM}_{ev}(-;\Q),$$
 with the canonical transformation $e$ defined in \cite{BSY} and $e(\LB^{-1/2}):=-1$ (to fit with the convention of \cite{BBS}). 
 Recall that the {\it Aluffi class} of the Hilbert scheme $X^{[n]}$ is defined in \cite{Beh} as  the MacPherson Chern class of the corresponding Behrend function:
 $$c_*^A(X^{[n]}):=c_*(\nu_{X^{[n]}}).$$
 Moreover,
 \begin{equation}\label{119} 
 e({\pi_n}_*[X^{[n]}]_{\rm relvir}) = {\pi_n}_*(\nu_{X^{[n]}}),
 \end{equation}
 as it follows from the compatibility of the transformation $e$ with vanishing cycles in the motivic and resp. constructible function context, see \cite{CMSS,Sch}. With these identifications, we obtain as a corollary of Theorem \ref{mainl}:
\begin{cor}\label{mainm} For any smooth quasi-projective threefold $X$ the following formula holds:
\begin{equation}\label{120}
\sum_{n=0}^{\infty} {\pi_n}_*(c_*^A(X^{[n]})) \cdot (-t)^n =  \prod_{k=1}^{\infty} (1-t^k \cdot d^k_*)^{-k \cdot {c}_*(X)}.
\end{equation}
\end{cor}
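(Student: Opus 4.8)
The plan is to deduce Corollary \ref{mainm} from Theorem \ref{mainl} by applying the normalization functor $\Psi_{(1-y)}$ to the identity (\ref{111}) and specializing the parameter $y$ to $1$, precisely mirroring the passage from Theorem \ref{thm1} to Corollary \ref{cormain2} (equivalently, this is the content of the MacPherson--Chern counterpart Theorem \ref{mt3} applied to the defining formula (\ref{maine})). First I would analyze the left-hand side of (\ref{111}). Since the MacPherson Chern class transformation factors as $c_*=c_*\circ e$ on $K_0(var/-)[\LB^{-1/2}]$ with $e(\LB^{-1/2})=-1$, and since $\lim_{y\to1}\Psi_{(1-y)}{T_{(-y)}}_*(-)=\widehat{T}_{-1*}(-)=c_*(-)\otimes\Q$ by (\ref{idc}), applying this operator to ${T_{(-y)}}_*({\pi_n}_*[X^{[n]}]_{\rm relvir})$ produces $c_*\left(e({\pi_n}_*[X^{[n]}]_{\rm relvir})\right)\otimes\Q$. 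Using the compatibility (\ref{119}), namely $e({\pi_n}_*[X^{[n]}]_{\rm relvir})={\pi_n}_*\nu_{X^{[n]}}$, together with the functoriality of MacPherson's transformation for the proper map $\pi_n$, this becomes ${\pi_n}_*c_*(\nu_{X^{[n]}})={\pi_n}_*c_*^A(X^{[n]})$, so the left side reassembles into $\sum_{n}{\pi_n}_*c_*^A(X^{[n]})\cdot(-t)^n$, exactly the left side of (\ref{120}).

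Next I would treat the right-hand side. Because $\Psi_{(1-y)}$ commutes with proper push-forward and exterior products, it commutes with the Pontrjagin product $\odot$ and with the power operations $P_k$; hence it can be brought inside the infinite Pontrjagin product and inside each factor $(1-t^k\cdot d^k_*)^{-\chi_{-y}(\alpha_k)\cdot{T_{(-y)}}_*(X)}=P_k\left((1-t\cdot d_*)^{-\chi_{-y}(\alpha_k)\cdot{T_{(-y)}}_*(X)}\right)$. Expanding the latter via the defining exponential
$$(1-t\cdot d_*)^{-\chi_{-y}(\alpha_k)\cdot{T_{(-y)}}_*(X)}=\exp\left(\sum_{r=1}^{\infty}\Psi_r d^r_*\left(\chi_{-y}(\alpha_k)\cdot{T_{(-y)}}_*(X)\right)\frac{t^r}{r}\right),$$
and using that $d^r_*$ commutes with both $\Psi_{(1-y)}$ and $\Psi_r$ while the scalar $\chi_{-y}(\alpha_k)$ sits in homological degree zero, the general term reduces to $\Psi_r(\chi_{-y}(\alpha_k))\cdot d^r_*\left(\Psi_{(1-y)}\Psi_r{T_{(-y)}}_*(X)\right)$. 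Two computations are then decisive. By the cancellation identity (\ref{cmssy}) one has $\lim_{y\to1}\Psi_{(1-y)}\Psi_r{T_{(-y)}}_*(X)=c_*(X)\otimes\Q$, independently of $r$. And from the convention $\chi_{-y}(-\LB^{1/2})=y^{1/2}$, $\Psi_r(y^{1/2})=y^{r/2}$, the scalar evaluates to $\chi_{-y}(\alpha_k)=y^{-k/2-1}(1+y+\cdots+y^{k-1})$, whence $\lim_{y\to1}\Psi_r(\chi_{-y}(\alpha_k))=\chi(\alpha_k)=k$, again independently of $r$. Substituting these limits collapses the exponential to $\exp\left(\sum_{r\geq1}k\,d^r_*(c_*(X))\frac{t^r}{r}\right)=(1-t\cdot d_*)^{-k\cdot c_*(X)}$ in the Adams-free sense of Corollary \ref{cormain2}; applying $P_k$ and reassembling the Pontrjagin product yields $\prod_{k\geq1}(1-t^k\cdot d^k_*)^{-k\cdot c_*(X)}$, the right side of (\ref{120}).

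The hard part will be the right-hand side analysis, specifically justifying that the normalization $\Psi_{(1-y)}$, the specialization $y\to1$, and the Adams operations $\Psi_r$ built into the homological exponentiation interact coherently. The delicate point is that both the scalar $\chi_{-y}(\alpha_k)$ and the class ${T_{(-y)}}_*(X)$ carry $y$-dependence that is acted on by $\Psi_r$, yet after normalizing and passing to the limit the $r$-dependence must disappear on both factors \emph{simultaneously}, so that the Adams-weighted sum $\sum_r(\cdots)\tfrac{t^r}{r}$ reorganizes into the Chern-class exponential with the constant exponent $k\cdot c_*(X)$. Establishing that (\ref{cmssy}) and the elementary identity $\lim_{y\to1}\Psi_r(\chi_{-y}(\alpha_k))=k$ hold with matching independence of $r$ is exactly what makes this collapse legitimate. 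Once this is in place, the only remaining bookkeeping is that the substitution $t\mapsto-t$ already present in (\ref{111}) propagates to the $(-t)^n$ on the left of (\ref{120}), and that all identities hold after rationalization; combining the two sides then gives the stated formula.
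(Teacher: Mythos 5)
Your proposal is correct and follows essentially the paper's own route: the paper likewise deduces Corollary \ref{mainm} from Theorem \ref{mainl} by applying the normalization $\Psi_{(1-y)}$ and specializing $y\to 1$, using the cancellation identity (\ref{cmssy}) for the Adams operations together with the factorization of $c_*$ through constructible functions via $e$ (with $e(\LB^{-1/2})=-1$) and the identification (\ref{119}), which turns the left-hand side into the push-forwards of the Aluffi classes. Your explicit verification that $\lim_{y\to 1}\Psi_r(\chi_{-y}(\alpha_k))=k$ independently of $r$ is precisely the computation implicit in the paper's exponent $-k\cdot c_*(X)$.
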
 
In particular, comparing with formula (\ref{200}), we obtain that
\begin{equation}\label{220}
{\pi_n}_*(c_*^A(X^{[n]})) =  (-1)^n {\pi_n}_*(c_*(X^{[n]})).
\end{equation}
This formula is obtained here from a motivic viewpoint. It also follows from the constructible function identity:
\begin{equation}\label{221}
{\pi_n}_*(\nu_{X^{[n]}}) =  (-1)^n {\pi_n}_*(1_{X^{[n]}})
\end{equation}
proved in \cite{BF}[Section 4]  by localization techniques for $\C^*$-equivariant symmetric obstruction theories. 

\bibliographystyle{amsalpha}

\begin{thebibliography}{000000} 

\bibitem{BFM} Baum, P., Fulton, W., MacPherson, M.,
\emph{Riemann-Roch for singular varieties}, Inst. Hautes \'Etudes Sci. Publ. Math. 
{\bf 45} (1975), 101-145.

\bibitem{Beh} Behrend, K., \emph{Donaldson-Thomas type invariants via microlocal geometry}, Ann. of Math. (2) {\bf 170} (2009), no. 3, 1307--1338.

\bibitem{BBS} Behrend, K., Bryan, J., Szendr\"oi, B., \emph{Motivic degree zero Donaldson-Thomas invariants}, arXiv:0909.5088v2

\bibitem{BF} Behrend, K., Fantechi, B., 
\emph{Symmetric obstruction theories and Hilbert schemes of points on threefolds},
Algebra Number Theory {\bf 2} (2008), no. 3, 313--345.

\bibitem {BBD} Beilinson, A. A., Bernstein, J., Deligne, P.,  \emph{Faisceaux pervers}, Analysis and topology on singular spaces, I (Luminy, 1981), As\'erisque {\bf 100} (1982), 5--171.


\bibitem{BNW} Boissi\`ere, S., Nieper-Wisskirchen, M. A., \emph{Generating series in the cohomology of Hilbert schemes of points on surfaces},
LMS J. Comput. Math. {\bf 10} (2007), 254--270.

\bibitem{BL2} Borisov, L., Libgober, A., \emph{Elliptic genera of singular varieties}, Duke Math. J. {\bf 116}  (2003), no. 2, 319--351.

\bibitem{BSY} Brasselet, J.-P., Sch\"urmann, J., Yokura, S., \emph{Hirzebruch classes and motivic Chern classes of singular spaces}, J. Topol. Anal. {\bf 2} (2010), no. 1, 1--55.

\bibitem{CS1} Cappell, S., Shaneson, J., \emph{Stratifiable maps and
topological invariants},  J. Amer. Math. Soc. {\bf 4}  (1991),  no. 3,  521--551.

\bibitem{CMSS} Cappell, S., Maxim, L., Sch\"urmann, J., Shaneson, J., \emph{Characteristic classes of complex hypersurfaces}, Adv. Math. {\bf 225} (2010), no. 5, 2616--2647.

\bibitem{CMSSY} Cappell, S., Maxim, L., Sch\"urmann, J., Shaneson, J., Yokura, S., \emph{Characteristic classes of symmetric products of complex quasi-projective varieties}, arXiv:1008.4299

\bibitem{C} Cheah, J., \emph{On the cohomology of Hilbert schemes of points}, J. Algebraic Geom. {\bf 5} (1996), no. 3, 479--511.

\bibitem{DS} Dimca, A., Szendr\"oi, B., \emph{The Milnor fiber of the Pfaffian and the Hilbert scheme of four points on $\mathbb{C}^3$}, Math. Res. Lett. {\bf 17} (2010), no. 2, 243--262.

\bibitem{ES} Ellingsrud, G., Str\o mme, S., {\it On the homology of the Hilbert scheme of points in the plane}, Invent. Math. {\bf 87} (1987), 343-352.

\bibitem{FL} Fulton, W., Lang, S., 
 \emph{Riemann-Roch algebra}, 
Grundlehren der Mathematischen Wissenschaften {\bf 277}, Springer-Verlag, New York, 1985, x+203 pp.


\bibitem{Ge} Getzler, E., \emph{Mixed Hodge structures of configuration spaces}, arXiv:alg-geom/9510018.

\bibitem{Go} Gorsky, E., \emph{Adams operations and power structures}, Mosc. Math. J. {\bf 9} (2009), no. 2, 305--323.

\bibitem{G}  G\"ottsche, L., \emph{The Betti numbers of the Hilbert scheme of points on a smooth projective surface}, Math. Ann. {\bf 286} (1990), no. 1-3, 193--207. 

\bibitem{GS}  G\"ottsche, L., Soergel, W., \emph{Perverse sheaves and the cohomology of Hilbert schemes of smooth algebraic surfaces}, Math. Ann. {\bf 296} (1993), 235--245.

\bibitem{GLM0} Gusein-Zade, S. M., Luengo, I., Melle-Hern\'andez, A., \emph{A power structure over the Grothendieck ring of varieties}, Math. Res. Lett. {\bf 11} (2004), no. 1, 49-57.  

\bibitem{GLM} Gusein-Zade, S. M., Luengo, I., Melle-Hern\'andez, A., \emph{Power structure over the Grothendieck ring of varieties and generating series of Hilbert schemes of points}, Michigan Math. J. {\bf 54} (2006), no. 2, 353--359.

\bibitem{H} Hirzebruch, F.,  \emph{Topological methods in algebraic geometry}, Die Grundlehren der Mathematischen Wissenschaften, Band  {\bf 131}, Springer-Verlag New York, Inc., New York 1966 x+232 pp.

\bibitem{K} Kapranov, M., \emph{The elliptic curve in the $S$-duality theory and Eisenstein series for Kac-Moody groups}, arXiv:math/0001005.

\bibitem{Lo} Looijenga, E., \emph{Motivic measures.} S\'eminaire Bourbaki, Vol. 1999/2000. Ast\'erisque No. {\bf 276} (2002), 267--297.

\bibitem{MP} MacPherson, R., \emph{Chern classes for singular algebraic varieties},
Ann. of Math. (2)  {\bf 100}  (1974), 423--432.

\bibitem{MP2} MacPherson, R., \emph{Characteristic classes for singular varieties.} Proceedings of the Ninth Brazilian Mathematical Colloquium (Po\c cos de Caldas, 1973), Vol. II (Portuguese), pp. 321--327, Inst. Mat. Pura Apl., Sao Paulo, 1977. 

\bibitem{MNOP} Maulik, D., Nekrasov, N., Okounkov, A., Pandharipande, R., \emph{Gromov-Witten theory and Donaldson-Thomas theory. I.} Compos. Math. {\bf 142} (2006), no. 5, 1263--1285.

\bibitem{M} Moonen, B., \emph{Das Lefschetz-Riemann-Roch-Theorem f\"ur singul\"are Variet\"aten}, Bonner Mathematische Schriften {\bf 106} (1978),  viii+223 pp.


\bibitem{NW}  Nieper-Wisskirchen, M. A., \emph{Characteristic classes of the  Hilbert scheme of points on non-compact simply-connected surfaces},
JP J. Geom. Topol. {\bf 8} (2008), 7--21.

\bibitem{Oh} Ohmoto, T., \emph{Generating functions of orbifold Chern classes I: symmetric products},  Math. Proc. Cambridge Philos. Soc.  {\bf 144}  (2008),  no. 2, 423-438. 

\bibitem{Sch} Sch\"urmann, J., \emph{Nearby cycles and characteristic classes of singular spaces}, 
arXiv:1003.2343, to appear in the Proceedings of the fifth Franco-Japanese Symposium on Singularities in Strasbourg 2009.

\bibitem{Yo} Yokura, S., \emph{A singular Riemann-Roch theorem for Hirzebruch characteristics}, 
   Banach Center Publ. {\bf 44} (1998), 257-268.    

\end{thebibliography}

\end{document}